\newcommand{\QQ}{\mathbb Q}
\newcommand{\ZZ}{\mathbb Z}
\newcommand{\BB}{\mathbb B}
\newcommand{\LL}{\mathcal{L}}
\newcommand{\CC}{\mathbb C}
\newcommand{\Qp}{\QQ_p}
\newcommand{\Zp}{\ZZ_p}
\newcommand{\calO}{\mathcal{O}}
\newcommand{\DD}{\mathbb D}
\newcommand{\vp}{\varphi}
\newcommand{\Dcris}{\DD_\mathrm{cris}}
\newcommand{\ff}{\mathfrak{f}}
\newcommand{\K}{\mathcal{K}}
\newcommand{\Hh}{\mathbb{H}}
\newcommand{\Qpn}{\mathbb{Q}_{p,n}}
\DeclareMathOperator{\Char}{Char}
\DeclareMathOperator{\Sel}{Sel}
\renewcommand{\a}{\mathfrak{a}}
\DeclareMathOperator{\Tw}{Tw}
\DeclareMathOperator{\per}{per}
\DeclareMathOperator{\Sym}{Sym}
\DeclareMathOperator{\Iw}{Iw}
\DeclareMathOperator{\Gal}{Gal}
\DeclareMathOperator{\cris}{cris}
\DeclareMathOperator{\Fil}{Fil}
\DeclareMathOperator{\loc}{loc}
\newcommand{\OO}{\mathcal{O}}
\newcommand{\symV}{\Sym^2(V_f)}
\newcommand{\HH}{\mathcal{H}}
\newcommand{\RR}{\mathbb R}
\DeclareMathOperator{\col}{Col}
\DeclareMathOperator{\Ind}{Ind}
\DeclareMathOperator{\cor}{cor}
\newcommand{\HIw}{\mathbb{H}^1_{\Iw}}
\newcommand{\IndQK}{\Ind_K^\QQ}
\newtheorem{theorem}{Theorem}[section]
\newtheorem{proposition}[theorem]{Proposition}
\newtheorem{lemma}[theorem]{Lemma}
\newtheorem{corollary}[theorem]{Corollary}
\newtheorem{remark}[theorem]{Remark}
\newtheorem{definition}[theorem]{Definition}
\newtheorem{hypothesis}[theorem]{Hypothesis}
\begin{document}

\title{Iwasawa Theory for the Symmetric Square of a CM Modular Form at Inert Primes}

\author{Antonio Lei}
\address{School of Mathematical Sciences\\
Monash University\\
Clayton, VIC 3800\\
Australia}
\email{antonio.lei@monash.edu}
\thanks{The author is supported by an ARC DP1092496 grant.}

\begin{abstract}
Let $f$ be a CM modular form and $p$ an odd prime which is inert in the CM field. We construct two $p$-adic $L$-functions for the symmetric square of $f$, one of which has the same interpolating properties as the one constructed by Delbourgo-Dabrowski, whereas the second one has a similar interpolating properties but corresponds to a different eigenvalue of the Frobenius. The symmetry between these two $p$-adic $L$-functions allows us to define the plus and minus $p$-adic $L$-functions \`{a} la Pollack. We also define the plus and minus $p$-Selmer groups analogous to Kobayashi's Selmer groups. We explain how to relate these two sets of objects via a main conjecture.
\end{abstract}

\subjclass[2000]{11R23,11F80}

\maketitle

 
\section{Introduction}
Let $f$ be a normalised eigen-newform of weight $k$, level $N$ and character $\epsilon$. Fix a prime $p\ne2$ such that $p\nmid N$. In \cite{delbourgodabrowski} (also in \cite{coatesschmidt} under some additional conditions), even distributions on $\Zp^\times$ are constructed to interpolate the $L$-values of the symmetric square of $f$. More precisely, if the Euler factor of $L(E,s)$ at $p$ is given by $(1-\alpha_1(p)p^{-s})(1-\alpha_2(p)p^{-s})$, then there exists an admissible distribution $\mu_{\alpha_i(p)^2}$ for $i=1,2$ such that 
\begin{equation}\label{inter}
\int_{\Zp^\times}\theta d\mu_{\alpha_i(p)^2}=\frac{p^{3n(k-1)}}{\alpha_i(p)^{2n}\tau(\theta^{-1})}\times\frac{L(\Sym^2f,\theta^{-1},2k-2)}{(\text{period})}
\end{equation}
for any non-trivial even Dirichlet character $\theta$ of conductor $p^n$ where $\tau(\theta^{-1})$ denotes the Gauss sum of $\theta^{-1}$.

Since the Euler factor of $L(\Sym^2f,s)$ at $p$ is $(1-\alpha_1(p)^2p^{-s})(1-\alpha_2(p)^2p^{-s})(1-\epsilon(p)p^{k-1-s})$, we expect that there should be a distribution $\mu_{\epsilon(p)p^{k-1}}$ satisfying interpolating properties similar to \eqref{inter}, but with $\alpha_i(p)^2$ replaced by $\epsilon(p)p^{k-1}$. In this paper, we construct such a distribution for the case when $f$ is a CM modular form that is non-ordinary at $p$. In other words, when the $L$-function of $f$ coincides with that of a Grossencharacter $\phi$ defined over $K$ and $p$ inerts in $K$. More precisely, we prove the following theorem in \S\ref{padicl} (Theorem~\ref{prelog}).

\begin{theorem}\label{exist}
If $f$ is as above, then there exist even admissible distributions $\mu_{\pm\epsilon(p)p^{k-1}}$ such that
$$
\int_{\Zp^\times}\theta d\mu_{\pm\epsilon(p)p^{k-1}}=\frac{p^{3n(k-1)}}{\left(\pm\epsilon(p)p^{k-1}\right)^{n}\tau(\theta^{-1})}\times\frac{L(\Sym^2f,\theta^{-1},2k-2)}{\rm(period)}.
$$
\end{theorem}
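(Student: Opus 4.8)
The plan is to construct the distribution $\mu_{\pm\epsilon(p)p^{k-1}}$ by exploiting the CM decomposition of $\Sym^2 f$. Since $f$ has CM by a Grossencharacter $\phi$ over the imaginary quadratic field $K$ in which $p$ is inert, we have an isomorphism of Galois representations $\Sym^2(V_f) \cong \IndQK(\phi^2/\phi^c) \oplus \epsilon\cdot\QQ_p(1-k)$ (or the analogous twist), where $\phi^c$ is the conjugate character. Correspondingly, $L(\Sym^2 f, s)$ factors as $L(\psi, s) \cdot L(\epsilon, s-k+1)$ where $\psi$ is the ring-class-type Hecke character attached to $\phi^{1-c}$ (or $\phi^2(\phi^c)^{-1}$). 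The first factor, being the $L$-function of a Hecke character over $K$ with $p$ inert, is exactly the situation where one has anticyclotomic/supersingular $p$-adic $L$-functions; the Euler factor of $L(\psi,s)$ at $p$ is the quadratic $(1-\alpha_1(p)^2 p^{-s})(1-\alpha_2(p)^2 p^{-s})$, while the Euler factor of the Dirichlet part $L(\epsilon, s-k+1)$ contributes precisely the missing $(1-\epsilon(p)p^{k-1-s})$ factor.

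The key steps, in order, would be: first, make the CM decomposition precise at the level of automorphic data, identifying the relevant Hecke character $\psi$ over $K$ and checking the Euler-factor bookkeeping so that the $p$-factor $(1-\epsilon(p)p^{k-1-s})$ is genuinely the one governing the new distribution. Second, recall or construct the Katz-type / Hecke-character $p$-adic $L$-function associated to the Dirichlet character $\epsilon$ twisted appropriately — but the real content is that the ``$\epsilon(p)p^{k-1}$'' eigenvalue corresponds to choosing a particular $p$-stabilisation, and its square root $\pm\epsilon(p)p^{(k-1)/2}\cdot(\text{unit})$ (matching the $\alpha_i(p)$ structure) is what produces the two signs $\pm$. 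Third, assemble the distribution on $\Zp^\times$ as a product (convolution) of the distribution interpolating $L(\psi,\theta^{-1}, 2k-2)$ with the $p$-adic distribution interpolating $L(\epsilon,\theta^{-1}, k-1)$, and verify that the resulting object is even and admissible (growth of order $\le$ the appropriate bound), the admissibility being inherited from Amice–Vélu/Vishik-type estimates on each factor. Fourth, evaluate the convolution against a non-trivial even character $\theta$ of conductor $p^n$ and match the interpolation formula: the Gauss-sum factor $\tau(\theta^{-1})$, the power $p^{3n(k-1)}$, and the eigenvalue power $(\pm\epsilon(p)p^{k-1})^n$ should all fall out of combining the two interpolation formulas together with standard Gauss-sum identities (the product $\tau(\theta^{-1})\tau(\theta^{-1})$ versus $\tau$ for the product character, and functional-equation symmetries).

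The main obstacle I expect is controlling admissibility and the precise $p$-power normalisations at the ``extra'' Euler factor. Because $\epsilon(p)p^{k-1}$ has the same $p$-adic valuation as $\alpha_i(p)^2$ in the non-ordinary CM case (indeed $\alpha_1(p)\alpha_2(p) = \epsilon(p)p^{k-1}$ and $\alpha_1(p)^2, \alpha_2(p)^2$ each have valuation $k-1$ when $a_p = 0$), the distribution $\mu_{\pm\epsilon(p)p^{k-1}}$ will be of the same order $h = k-1$ as $\mu_{\alpha_i(p)^2}$, and the admissibility bound is sharp — so the growth estimate must be established carefully, not merely asserted. A secondary subtlety is pinning down the sign/square-root ambiguity: the object naturally interpolating the Dirichlet $L$-values involves $\epsilon(p)p^{k-1}$, but to match the form of \eqref{inter} with a first-power eigenvalue $(\pm\epsilon(p)p^{k-1})^n$ one must split off an appropriate square root, and showing that both choices of sign yield honest (distinct) admissible distributions — rather than one being spurious — is where the delicate local analysis at $p$ enters. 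The CM hypothesis and the inertness of $p$ are exactly what make this square root available and the whole construction go through.
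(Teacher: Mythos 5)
Your overall blueprint—split $\symV$ into an induced piece plus a one-dimensional piece, take the product of the corresponding $p$-adic $L$-functions, and check admissibility—matches the paper's strategy (Proposition~\ref{2reps}, Corollary~\ref{Lfact}, and the definition of $L_\alpha(\symV)$). But two concrete points in your write-up are wrong in a way that would derail the argument. First, the one-dimensional summand is $\varepsilon_K\cdot\epsilon\cdot\chi^{k-1}$, not $\epsilon\cdot\chi^{k-1}$: the quadratic character $\varepsilon_K$ of $K$ is forced by Lemma~\ref{det} (the matrix \eqref{form2} on the non-identity coset has determinant $-\tilde\phi(\sigma')\tilde\phi(\iota\sigma'\iota)$), and Hypothesis~\ref{hyp1} ($\varepsilon_K\ne\epsilon$) is there precisely so that $\varepsilon_K\epsilon$ is nontrivial and the Kubota--Leopoldt measure $L_p(\varepsilon_K\epsilon)$ has no pole. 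With your summand $\epsilon\cdot\chi^{k-1}$, the case $\epsilon=1$ (e.g.\ CM elliptic curves) breaks. Your Euler-factor bookkeeping at $p$ is consequently off: the $p$-factor of $L(\phi^2,s)$ is $1-\phi^2(p)p^{-2s}=(1-\epsilon(p)p^{k-1-s})(1+\epsilon(p)p^{k-1-s})$ (two \emph{distinct} roots $\pm\epsilon(p)p^{k-1}$), and the Dirichlet piece contributes $(1+\epsilon(p)p^{k-1-s})$, not $(1-\epsilon(p)p^{k-1-s})$; your proposed split into $(1-\alpha_1^2p^{-s})(1-\alpha_2^2p^{-s})$ for the Hecke part (which here equals $(1+\epsilon(p)p^{k-1-s})^2$) and $(1-\epsilon(p)p^{k-1-s})$ for the Dirichlet part does not multiply out to the $\Sym^2$ Euler factor.

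Second, and more seriously, your account of where the two signs $\pm$ come from—``split off an appropriate square root $\pm\epsilon(p)p^{(k-1)/2}\cdot(\text{unit})$ of $\epsilon(p)p^{k-1}$''—is not what is happening and would not produce the stated interpolation with $(\pm\epsilon(p)p^{k-1})^n$ in the denominator. The paper's mechanism (Corollary~\ref{eval}) is that the Frobenius $\vp$ on $\Dcris(\IndQK V(\phi^2))$ has the two distinct eigenvalues $+\epsilon(p)p^{k-1}$ and $-\epsilon(p)p^{k-1}$; choosing the corresponding eigenvector $v^\pm$ in the Perrin--Riou pairing $\LL_{v^\pm}$ applied to the elliptic-unit/Kato element $\Tw_{2k-2}(z(\bar\phi^2))$ is what produces $L_{\pm\epsilon(p)p^{k-1}}(\phi^2)$, and this in turn is what is multiplied by $\Tw_{-k+1}L_p(\varepsilon_K\epsilon)$. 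No square root is extracted, and there is nothing ``spurious'' to rule out: both eigenvectors give honest Perrin--Riou $L$-functions, with admissibility ($\HH_{k-1,E}$-growth) coming for free from the Perrin--Riou machinery since the slope of $\vp$ on $\Dcris(V_2)$ is $k-1$. You should re-centre the argument on the Frobenius eigenspace decomposition of $\Dcris(V_2)$ (Proposition~\ref{des} and Corollary~\ref{eval}) rather than on a nonexistent square root of the Dirichlet eigenvalue.
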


Note that we have $\alpha_1(p)^2=\alpha_2(p)^2=-\epsilon(p)p^{k-1}$ in this case,  methods in \cite{delbourgodabrowski} only produce one distribution, which agrees with $\mu_{-\epsilon(p)p^{k-1}}$ as given by Theorem~\ref{exist}.

The idea of the construction is rather simple. Let $V_f$ be the $p$-adic representation of $G_\QQ$ associated to $f$ as constructed by Deligne in \cite{deligne69}. In order to prove Theorem~\ref{exist}, we make use of the following observation. As $G_\QQ$-representations, we have
$$
\symV\cong V_1\oplus V_2
$$
where $V_1$ is an one-dimensional representation associated to some Dirichlet character $\eta$ twisted by a power of the cyclotomic character and $V_2$ is a two-dimensional representation associated to the Grossencharacter $\phi^2$. This implies that the $L$-function of $f$ factorises into
$$
L(\Sym^2f,s)=L(\phi^2,s)L(\eta,s-k+1).
$$
We can therefore make use of an Euler system constructed from elliptic units to interpolate the $L$-values of $\phi^2$ and multiply the resulting distributions with an appropriate twist of the Kubota-Leopoldt $p$-adic $L$-function associated to $\eta$, which interpolates the $L$-values of $\eta$.

Because of the symmetry between the two distributions, we show that some plus and minus logarithms $\log^\pm$ of Pollack divide $\mu_{+\epsilon(p)p^{k-1}}\pm\mu_{-\epsilon(p)p^{k-1}}$. This allows us to obtain two bounded measures:

\begin{theorem}(Theorem~\ref{interpopm}) Let $\theta$ be an even Dirichlet character of conductor $p^n$. There exist bounded $p$-adic measures $\mu^\pm(\symV)$ such that the followings hold.
\begin{itemize}
\item[(a)] If $n$ is even, then
$$\int_{\Zp^\times}\theta\mu^+(\symV)
=\frac{(2k-3)!(k-1)!p^{2n(k-1)}}{\theta\left(\log^+\right)\tau(\theta^{-1})^2\epsilon(p)^n}\times\frac{L(\Sym^2f,\theta^{-1},2k-2)}{\rm (period)};
$$
\item[(b)] If $n$ is odd, then
$$\int_{\Zp^\times}\theta\mu^-(\symV)
=\frac{(2k-3)!(k-1)!p^{2n(k-1)}}{\theta\left(\log^-\right)\tau(\theta^{-1})^2\epsilon(p)^n}\times\frac{L(\Sym^2f,\theta^{-1},2k-2)}{\rm (period)}.
$$
\end{itemize}
Moreover, $\mu^\pm(\symV)$ are uniquely determined by (a) and (b) respectively.
\end{theorem}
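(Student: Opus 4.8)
The plan is to exploit the sign $(\pm1)^n$ occurring in the interpolation formula of Theorem~\ref{exist}. Set $L^\pm:=\mu_{+\epsilon(p)p^{k-1}}\pm\mu_{-\epsilon(p)p^{k-1}}$. Evaluated against an even Dirichlet character $\theta$ of conductor $p^n$, the contribution of $\mu_{-\epsilon(p)p^{k-1}}$ differs from that of $\mu_{+\epsilon(p)p^{k-1}}$ precisely by a factor $(-1)^n$; hence $\int_{\Zp^\times}\theta\,dL^+$ vanishes for $n$ odd and, for $n$ even, equals $\frac{2p^{2n(k-1)}}{\epsilon(p)^n\tau(\theta^{-1})}\cdot\frac{L(\Sym^2f,\theta^{-1},2k-2)}{\mathrm{(period)}}$, while symmetrically $\int_{\Zp^\times}\theta\,dL^-$ vanishes for $n$ even and has the analogous value for $n$ odd. (The trivial character is handled separately, its Euler factor being a $p$-adic unit.)

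I would then recall Pollack's half-logarithms $\log^\pm\in\HG$, in the normalisation adapted to $\symV$, with the two features that matter: each has simple zeros, and $\log^+$ (resp. $\log^-$) vanishes at exactly the even Dirichlet characters whose conductor is $p^n$ with $n$ odd (resp. $n$ even). By the first paragraph $L^+$ (resp. $L^-$) vanishes at every zero of $\log^+$ (resp. $\log^-$), so, up to an explicit constant, $\mu^\pm(\symV):=L^\pm/\log^\pm$ is a well-defined element of $\HG$. As evaluation at a character is a ring homomorphism on $\HG$, this gives $\int_{\Zp^\times}\theta\,d\mu^\pm(\symV)=\theta(\log^\pm)^{-1}\int_{\Zp^\times}\theta\,dL^\pm$ for all $\theta$; combining the value of $\int\theta\,dL^\pm$ with the explicit formula for $\theta(\log^\pm)$ — which contributes the extra Gauss sum (turning $\tau(\theta^{-1})$ into $\tau(\theta^{-1})^2$) together with the $\Gamma$-factors $(2k-3)!$ and $(k-1)!$ — yields the interpolation formulas (a) and (b).

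The substantive step is that $\mu^\pm(\symV)$ is not just a distribution but a \emph{bounded} measure, i.e. lies in $\OO[[G_\infty]]\otimes\QQ_p$. For this I would invoke the $p$-adic analytic estimate underlying Pollack's construction: the admissibility order of $\mu_{\pm\epsilon(p)p^{k-1}}$ — determined as in Theorem~\ref{prelog} and governed by $v_p(\epsilon(p)p^{k-1})=k-1$ — matches the order of $\log^\pm$, so the quotient has order $\le0$ and is bounded. (One could alternatively verify boundedness by showing that $\theta\chi^j(\mu^\pm(\symV))$ stays bounded as $\theta,j$ vary, using that $\mu_{\pm\epsilon(p)p^{k-1}}$ interpolate a full family of critical values.) I expect this order matching — and the parallel check that the factorials come out exactly as stated — to be the main obstacle; the remaining steps are formal.

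For uniqueness: the $\mu_{\pm\epsilon(p)p^{k-1}}$ are even, hence so are $L^\pm$ and $\mu^\pm(\symV)$, so $\mu^+(\symV)$ lies in the even part of $\OO[[G_\infty]]\otimes\QQ_p$. The even Dirichlet characters of conductor $p^n$ with $n$ even (ranging over all tame parts) are Zariski-dense in that even part, and a bounded measure is determined by its values on a Zariski-dense set of characters; hence (a) pins down $\mu^+(\symV)$, and (b) pins down $\mu^-(\symV)$.
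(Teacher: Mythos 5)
There is a genuine gap at the divisibility step. Your argument divides $L^\pm=\mu_{+\epsilon(p)p^{k-1}}\pm\mu_{-\epsilon(p)p^{k-1}}$ by $\log^\pm$ after checking that $L^\pm$ vanishes at the even finite-order characters of conductor $p^n$ with $n$ of the appropriate parity, on the claim that these are ``exactly'' the zeros of $\log^\pm$. That claim is false for the half-logarithms in the normalisation adapted to $\symV$: here $\log^{\pm}(\gamma)=\prod_{r=0}^{2k-3}\prod_{n}\Phi_{2n}(\chi(\gamma)^{-r}\gamma)/p$ (resp.\ with $\Phi_{2n-1}$), whose zero locus consists of \emph{all} characters of the form $\chi^{r}\theta$ with $0\le r\le 2k-3$ and $\theta$ of the relevant conductor parity, of either sign at $-1$. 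Evenness of the distributions disposes of the odd ones, but for $k\ge3$ the zeros at the non-critical twists $\chi^{2}\theta,\chi^{4}\theta,\dots$ remain, and vanishing of $L^\pm$ there cannot be extracted from the interpolation statement of Theorem~\ref{exist}: the distributions have growth $O(\log_p^{k-1})$, so their values at these extra twists are genuinely additional data not determined by the critical values at finite-order characters. (Concretely, $\prod_{n}\Phi_{2n}(\gamma)/p$ vanishes at every finite-order character of odd wild exponent yet is not divisible by $\log^+$.) This is precisely what the paper supplies via Lemma~\ref{zeros}, which asserts $\chi^{r}\theta\bigl(L_{+\epsilon(p)p^{k-1}}(\phi^2)\bigr)=(-1)^n\chi^{r}\theta\bigl(L_{-\epsilon(p)p^{k-1}}(\phi^2)\bigr)$ for \emph{every} $0\le r\le 2k-3$; its proof uses the explicit construction through the Perrin-Riou pairing together with Lemma~\ref{properties}(b), namely that $v^{+}_{r+1}$ and $v^{-}_{r+1}$ have identical pairing against $\Fil^{0}\Dcris(V_2^*(-r))$ in the whole range of twists. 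Without access to the construction (or some equivalent input), the quotient $L^\pm/\log^\pm$ need not even lie in $\HH_\infty(G_\infty)$, so the subsequent growth-comparison argument for boundedness (which is otherwise the same as the paper's: order $k-1$ over $\log^\pm\sim\log^{k-1}$ gives an element of $\Lambda_E(G_\infty)$) has nothing to apply to.

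Two smaller points. First, the bookkeeping of constants is misattributed: $\theta(\log^\pm)$ contributes neither a Gauss sum nor $\Gamma$-factors; in the paper the second factor $\tau(\theta^{-1})$ and $(k-1)!$ enter through the Kubota--Leopoldt factor $\Tw_{-k+1}L_p(\varepsilon_K\cdot\epsilon)$ coming from the factorisation $L(\Sym^2 f,s)=L(\phi^2,s)L(\varepsilon_K\epsilon,s-k+1)$, and $(2k-3)!$ through the Perrin-Riou interpolation (Theorem~\ref{prelog}); $\theta(\log^\pm)$ simply survives into the denominator. Second, your uniqueness argument is essentially the paper's (an element of $\Lambda_E(G_\infty)$ is determined by its values at infinitely many characters), and that part is fine.
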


In \S~\ref{selmer}, we make use of some of the ideas in \cite{lei09} to show that these measures can be obtained from some appropriate Coleman maps and define the corresponding plus and minus $p$-Selmer groups $\Sel_p^\pm(\symV)$. On identifying the measures as elements in some Iwasawa algebra $\Lambda\otimes\QQ$, we show that the following holds under some appropriate conditions (see Theorem~\ref{MC0} for a precise statement).
\begin{theorem}The Selmer groups $\Sel_p^\pm(\symV)$ are $\Lambda$-cotorsion and
$$\Char_{\Lambda\otimes\QQ}\left(\Sel_p^\pm(\symV)^\vee\right)=\left(\mu^\pm(\symV)\right).$$
\end{theorem}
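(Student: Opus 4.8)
The plan is to exploit the decomposition $\symV \cong V_1 \oplus V_2$ recorded in the introduction, which separates the main conjecture into an abelian part attached to the Dirichlet character $\eta$ (suitably Tate-twisted) and a part attached to the two-dimensional CM representation $V_2 = \IndQK(\phi^2)$. On the analytic side one compares the interpolation property of Theorem~\ref{interpopm} with the factorisation $L(\Sym^2f,s)=L(\phi^2,s)L(\eta,s-k+1)$ and with the interpolation property of the Kubota--Leopoldt $p$-adic $L$-function of $\eta$, and checks that $\mu^\pm(\symV)$ equals, up to a unit in $\Lambda\otimes\QQ$, the product of an explicit cyclotomic twist of $L_p(\eta)$ with the plus/minus $p$-adic $L$-function $L_p^\pm(\phi^2)$ built \`{a} la Pollack from the elliptic-unit Euler system for $\phi^2$ --- the latter being precisely the object underlying the construction of $\mu_{\pm\epsilon(p)p^{k-1}}$ in \S\ref{padicl}. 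On the algebraic side one unwinds the local conditions at $p$: the summand $V_1$ has Hodge--Tate weight $k-1$ and carries the ordinary condition, while $V_2$ is crystalline and non-ordinary at $p$ (because $p$ is inert in $K$) and carries the Kobayashi-style plus/minus conditions defined, via the Coleman maps of \cite{lei09}, over the unramified quadratic extension $K\otimes\Qp$ of $\Qp$. A Poitou--Tate computation then gives an isomorphism up to pseudo-null $\Lambda$-modules
\[
\Sel_p^\pm(\symV)^\vee \;\sim\; \Sel_p(V_1)^\vee \oplus \Sel_p^\pm(V_2)^\vee ,
\]
and since characteristic ideals ignore pseudo-null submodules and are multiplicative over direct sums, it suffices to prove a main conjecture, with equality, for each summand.

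The $V_1$-summand is handled by the cyclotomic Iwasawa main conjecture over $\QQ$: $\Sel_p(V_1)$ is a classical Selmer group of the Dirichlet character $\eta$ twisted by $\chi_{\mathrm{cyc}}^{k-1}$, so by Mazur--Wiles it is $\Lambda$-cotorsion and $\Char_{\Lambda\otimes\QQ}(\Sel_p(V_1)^\vee)=(L_p(\eta\,\chi_{\mathrm{cyc}}^{k-1}))$, the right-hand side being non-zero by Kubota--Leopoldt. For the $V_2$-summand, Shapiro's lemma identifies $\Sel_p^\pm(V_2)$ over $\QQ$ with a plus/minus Selmer group over $K$ of the appropriate twist of $\phi^2$; one then invokes the main conjecture for CM fields at a supersingular prime in the style of Pollack--Rubin, in which one divisibility comes from the Euler system of elliptic units via Rubin's Euler-system argument and the reverse divisibility from the further analytic input of that argument. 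Combining the two equalities and the analytic factorisation from the first paragraph yields $\Char_{\Lambda\otimes\QQ}(\Sel_p^\pm(\symV)^\vee)=(\mu^\pm(\symV))$, and $\Lambda$-cotorsionness of $\Sel_p^\pm(\symV)$ follows because each factor is a non-zero element of $\Lambda\otimes\QQ$.

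The main obstacle is the $V_2$-piece. The delicate point is the control theorem comparing $\Sel_p^\pm(V_2)$ with the unrestricted Iwasawa cohomology: the elliptic-unit Euler system most naturally bounds a Selmer group defined by the ``full'' local condition at $p$, and one has to show that this bound descends to the $\pm$-Selmer group without creating spurious factors in the characteristic ideal, which is exactly where the Coleman-map description of the local condition from \cite{lei09} enters. Further technical points to monitor are the possible trivial zeros of $L_p(\eta\,\chi_{\mathrm{cyc}}^{k-1})$ and of $L_p^\pm(\phi^2)$ at the relevant specialisations, the precise matching of periods and $\epsilon$-factors across the factorisation of $L(\Sym^2f,s)$, and the hypotheses on $\mu$-invariants and on the ramification and residual irreducibility of $\phi$ that are needed both to force the $\mu$-invariants of the two Selmer groups to vanish and to make the pseudo-isomorphism above compatible with the characteristic-ideal bookkeeping; these should be the ``appropriate conditions'' referred to in the statement of Theorem~\ref{MC0}.
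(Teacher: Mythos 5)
Your proposal follows essentially the same route as the paper: decompose $\symV\cong V_1\oplus V_2$, factor the $p$-adic $L$-function accordingly (equation~\eqref{facts}), invoke the Mazur--Wiles main conjecture for the $V_1$-piece (Theorem~\ref{MC} and Corollary~\ref{MC1}), invoke a Rubin/Pollack--Rubin-style main conjecture over $K$ at a supersingular prime for the $V_2$-piece (Proposition~\ref{MC2}, which cites \cite{rubin91}, \cite{pollackrubin04} and \cite[\S7]{lei09}), and conclude via multiplicativity of characteristic ideals over direct sums.

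Two small points of divergence from the actual argument are worth flagging. First, you reach the decomposition
\[
\Sel_p^\pm(\symV)^\vee \sim \Sel_p(V_1)^\vee \oplus \Sel_p^\pm(V_2)^\vee
\]
``up to pseudo-null'' via a Poitou--Tate computation, but in the paper this step is free: the plus/minus Selmer group of $\Sym^2T_f(1)$ is \emph{defined} to be $\Sel_p(T_1(1))\oplus\Sel_p^\pm(T_2(1))$, using that $\Sym^2T_f\cong T_1\oplus T_2$ for $p\ne 2$, so the identity holds on the nose with no pseudo-null ambiguity. Second, the ``appropriate conditions'' referred to in the statement are not hypotheses on $\mu$-invariants or residual irreducibility imposed in the proof of Theorem~\ref{MC0} itself; they are the standing Hypotheses~\ref{hyp1} and \ref{hyp2} together with the constraint from Proposition~\ref{MC2} that the $\Delta$-character $\eta$ must be trivial when the sign is $-$ (and even in general). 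Any deeper technical inputs you mention are indeed present, but they live inside the citations to Rubin and Pollack--Rubin rather than in this proof.
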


Finally, in the appendix, we explain how some of the linear algebra results that we use to prove the main theorems can be easily generalised to general symmetric powers $\Sym^mf$ where $m\ge2$ is an integer.


\section{Notation}

\subsection{Extensions by $p$ power roots of unity} 
   Throughout this paper, $p$ is an odd prime. If $K$ is a field of characteristic $0$, either local or global, $G_K$ denotes its absolute Galois group, $\chi$ the $p$-cyclotomic character on $G_K$ and $\calO_K$ the ring of integers of $K$. We write $\iota$ for the complex conjugation in $G_\QQ$.

For an integer $n\ge0$, we write $K_n$ for the extension $K(\mu_{p^n})$ where $\mu_{p^n}$ is the set of $p^n$th roots of unity and $K_\infty$ denotes $\bigcup_{n\ge1} K_n$. When $K=\QQ$, we write $k_n=\QQ(\mu_{p^n})$ instead. In particular, we write $\Qpn=\Qp(\mu_{p^n})$. Let $G_n$ denote the Galois group $\Gal(\Qpn/\Qp)$ for $0\le n \le\infty$. Then, $G_\infty\cong\Delta\times\Gamma$ where $\Delta=G_1$ is a finite group of order $p-1$ and $\Gamma=\Gal(\QQ_{p,\infty}/\QQ_{p,1})\cong\Zp$. We fix a topological generator $\gamma$ of $\Gamma$.


\subsection{Iwasawa algebras and power series}
 Given a finite extension $K$ of $\Qp$, $\Lambda_{\calO_K}(G_\infty)$ (respectively $\Lambda_{\calO_K}(\Gamma)$) denotes the Iwasawa algebra of $G_\infty$ (respectively $\Gamma$) over $\calO_K$. We write $\Lambda_K(G_\infty)=\Lambda_{\calO_K}(G_\infty)\otimes K$ and $\Lambda_K(\Gamma)=\Lambda_{\calO_K}(\Gamma)\otimes K$. If $M$ is a finitely generated $\Lambda_{\calO_K}(\Gamma)$-torsion (respectively $\Lambda_K(\Gamma)$-torsion) module, we write $\Char_{\Lambda_{\calO_K}(\Gamma)}(M)$ (respectively $\Char_{\Lambda_K(\Gamma)}(M)$) for its characteristic ideal.

Given a module $M$ over $\Lambda_{\calO_K}(G_\infty)$ (respectively $\Lambda_K(G_\infty)$) and a character $\delta:\Delta\rightarrow\Zp^\times$, $M^\delta$ denotes the $\delta$-isotypical component of $M$. For any $m\in M$, we write $m^\delta$ for the projection of $m$ into $M^\delta$. The Pontryagin dual of $M$ is written as $M^\vee$.

Let $r\in\RR_{\ge0}$. We define
\[
\HH_r=\left\{\sum_{n\geq0,\sigma\in\Delta}c_{n,\sigma}\cdot\sigma\cdot X^n\in\CC_p[\Delta][[X]]:\sup_{n}\frac{|c_{n,\sigma}|_p}{n^r}<\infty\ \forall\sigma\in\Delta\right\}
\]
where $|\cdot|_p$ is the $p$-adic norm on $\CC_p$ such that $|p|_p=p^{-1}$. We write $\HH_\infty=\cup_{r\ge0}\HH_r$ and $\HH_r(G_\infty)=\{f(\gamma-1):f\in\HH_r\}$
 for $r\in\RR_{\ge0}\cup\{\infty\}$. In other words, the elements of $\HH_r$ (respectively $\HH_r(G_\infty)$) are the power series in $X$ (respectively $\gamma-1$) over $\CC_p[\Delta]$ with growth rate $O(\log_p^r)$. If $F,G\in\HH_\infty$ or $\HH_\infty(G_\infty)$ are such that $F=O(G)$ and $G=O(F)$, we write $F\sim G$.

Given a subfield $K$ of $\CC_p$, we write $\HH_{r,K}=\HH_r\cap K[\Delta][[X]]$ and similarly for $\HH_{r,K}(G_\infty)$. In particular, $\HH_{0,K}(G_\infty)=\Lambda_{K}(G_\infty)$. 

Let $n\in\ZZ$. We define the $K$-linear map $\Tw_n$ from $\HH_{r,K}(G_\infty)$ to itself to be the map that sends $\sigma$ to $\chi(\sigma)^n\sigma$ for all $\sigma\in G_\infty$. It is clearly bijective (with inverse $\Tw_{-n}$).


\subsection{Crystalline representations}
We write $\BB_{\cris}$ and $\BB_{\rm dR}$ for the rings of Fontaine and $\vp$ for the Frobenius acting on these rings. Recall that there exists an element $t\in\BB_{\rm dR}$ such that $\vp(t)=pt$ and $g\cdot t=\chi(g)t$ for $g\in G_{\Qp}$.

Let $V$ be a $p$-adic representation of $G_{\Qp}$.  We denote the Dieudonn\'{e} module by $\DD_{\cris}(V)=(\BB_{\cris}\otimes V)^{G_{\Qp}}$. We say that $V$ is crystalline if $V$ has the same $\Qp$-dimension as $\Dcris(V)$. Fix such a $V$. If $j\in\ZZ$, $\Fil^j\Dcris(V)$ denotes the $j$th de Rham filtration of $\Dcris(V)$.

Let $T$ be a lattice of $V$ which is stable under $G_{\Qp}$. Let $\HIw(T)$ denote the inverse limit $\displaystyle\lim_{\leftarrow}H^1(\Qpn,T)$ with respect to the corestriction and $\HIw(V)=\QQ\otimes\HIw(T)$. Moreover, if $V$ arises from the restriction of a $p$-adic representation of $G_\QQ$ and $T$ is a lattice stable under $G_\QQ$, we write
\[
\Hh^1(T)=\lim_{\stackrel{\longleftarrow}{n}}H^1(\ZZ[\mu_{p^n},1/p],T)\quad\text{and}\quad
\Hh^1(V)=\QQ\otimes\Hh^1(T).
\]
We have localisation maps 
$$\loc:\Hh^1(T)\rightarrow\HIw(T)\quad\text{and}\quad\loc:\Hh^1(V)\rightarrow\HIw(V).$$
If $F$ is a number field, we define the $p$-Selmer group of $T$ over $F$ to by
$$
\Sel_p(T/F)=\ker\left(H^1(K,T\otimes\Qp/\Zp)\rightarrow\prod_v\frac{H^1(F_v,T\otimes\Qp/\Zp)}{H^1_f(F_v,T\otimes\Qp/\Zp)}\right)
$$
where $v$ runs through the places of $F$.

Let $V(j)$ denote the $j$th Tate twist of $V$, i.e. $V(j)=V\otimes\Qp e_j$ where $G_{\Qp}$ acts on $e_j$ via $\chi^j$. We have $$
\Dcris(V(j))=t^{-j}\Dcris(V)\otimes e_j.
$$
For any $v\in\Dcris(V)$, $v_j=v\otimes t^{-j}e_j$ denotes its image in $\Dcris(V(j))$. We write $\Tw_{j}:\HIw(V)\rightarrow\HIw(V(j))$ for the isomorphism defined in \cite[\S~A.4]{perrinriou93}, which depends on a choice of primitive $p$-power roots of unity. 

Finally, we write 
\[
\exp:\Qpn\otimes\Dcris(V)\rightarrow H^1(\Qpn,V)\quad\text{and}\quad\exp^*:H^1(\Qpn,V)\rightarrow\Qpn\otimes\Fil^0\Dcris(V)
\]
for Bloch-Kato's exponential and dual exponential respectively.


\subsection{Imaginary quadratic fields}

Let $K$ be an imaginary quadratic field with ring of integers $\OO$ and idele class group $C_K$. We write $\varepsilon_K$ for the quadratic character associated to $K$, i.e. the character on $G_{\QQ}$ which sends $\sigma$ to $1$ if $\sigma\in G_K$ and to $-1$ otherwise.

A Grossencharacter of $K$ is simply a continuous homomorphism $\phi:C_K\rightarrow\CC^\times$ with complex $L$-function
\[
L(\phi,s)=\prod_v(1-\phi(v)N(v)^{-s})^{-1}
\]
where the product runs through the finite places $v$ of $K$ at which $\phi$ is unramified, $\phi(v)$ is the image of the uniformiser of $K_{v}$ under $\phi$ and $N(v)$ is the norm of $v$. Let $\ff$ be the conductor of $\phi$. We say that $\eta$ is of type $(m,n)$ where $m,n\in\ZZ$ if the restriction of $\eta$ to the archimedean part $\CC^\times$ of $C_K$ is of the form $z\mapsto z^m\bar{z}^n$.

We write $\K=\cup K(p^n\ff)$ where $K(\a)$ denotes the ray class field of $K$ modulo $\a$ if $\a$ is an ideal of $\OO$.

If $T$ is a $\Zp$-representation of $G_K$, we write
$$
\Hh^1_{p^\infty\ff}(T)=\varprojlim_{K'} H^1(\OO_{K'}[1/p],T)\quad\text{and}\quad\Hh^1_{p^\infty\ff}(\QQ\otimes_{\Zp} T)=\Hh^1_{p^\infty\ff}(T)\otimes_{\Zp}\QQ$$
where $K'$ ranges over all finite extensions of $K$ contained in $K(p^\infty\ff)$.

\subsection{Modular forms}\label{modularforms}

   Let $f=\sum a_nq^n$ be a normalised eigen-newform of weight $k\ge2$, level $N$ and character $\epsilon$. We assume that $f$ is a CM modular form, i.e. $L(f,s)=L(\phi,s)$ for some Grossencharacter $\phi$ of an imaginary quadratic field $K$ with conductor $\ff$. Then, $\phi$ is of type $(-k+1,0)$. Moreover, $p$ inerts in $K$ if and only if $f$ is non-ordinary at $p$. In this case, $a_p$ is always $0$. Throughout, we fix such a $p$ with $p\ne2$.

The coefficient field $F_f$ of $f$ is contained in the field of definition of $\phi$. We write $E$ for the completion of this field at a fixed prime above $p$.

We write $V_f$ for the 2-dimensional $E$-linear representation of $G_{\QQ}$ associated to $f$ from \cite{deligne69}, so we have a homomorphism
$$
\rho_f:G_\QQ\rightarrow \text{GL}(V_f).
$$

Throughout the paper, we assume that the following hypothesis holds.
\begin{hypothesis}\label{hyp1}
If $\epsilon$ and $K$ are as above, then $\varepsilon_K\ne\epsilon$.
\end{hypothesis}


\section{$p$-adic $L$-functions}\label{padicl}

\subsection{Grossencharacters over $K$}\label{EU}
We first review some results on Grossencharacters. Let $\eta$ be a Grossencharacter on $G_K$ of conductor $\ff$. We fix a finite extension $E$ of $\Qp$ such that $E$ contains the image of $\eta$. We write $V(\eta)$ for the one-dimensional $E$-linear representation of $G_K$. It is a representation that factors through $\Gal(\K/K)$. For an ideal $\a$ of $\OO$ which is prime to $p\ff$, the Artin symbol $(\a,\K/K)\in\Gal(\K/K)$ acts on $V(\eta)$ as the multiplication by $\eta(\a)^{-1}$. We write $\tilde{\eta}:G_K\rightarrow E^\times$ for the corresponding character.

We write $\tilde{V}_\eta=\IndQK(V(\eta))$. The canonical homomorphism $K\otimes\QQ(\zeta_{p^\infty})\rightarrow K(p^\infty\ff)$ induces a map
$$
\Ind:\Hh^1_{p^\infty\ff}(V(\eta))\rightarrow\Hh^1(\tilde{V}_\eta).
$$
Let $\gamma$ be a non-zero element of $V(\eta)$. 
 By \cite[\S 15.5]{kato04}, a system of norm compatible elliptic units in $K(p^n\ff)$ defines an element $z_{p^\infty\ff}\in\Hh^1_{p^\infty\ff}(\Zp(1))$. We write the image of $z_{p^\infty\ff}$ under the composition
$$\xymatrix@1{\Hh^1_{p^\infty\ff}(\Zp(1))\ar[r]^-{\gamma}&\Hh^1_{p^\infty\ff}(V(\eta)(1))\ar[r]^-{\Ind}&\Hh^1(\tilde{V}_\eta(1))\ar[r]^-{\loc}&\HIw(\tilde{V}_\eta(1))\ar[r]^{\Tw_{-1}}&\HIw(\tilde{V}_\eta)}$$
as $z_\gamma(\eta)=z(\eta)$ and its projection into $H^1(\Qpn,\tilde{V}_\eta(j))$ is denoted by $z_{j,n}(\eta)$.

Note that the eigenvalues of $\iota$ on $\tilde{V}_\eta$ are $\pm1$, each with multiplicity $1$. If $v\in\tilde{V}_\eta$, we write $v^\pm$ for the projection of $v$ into the $\pm1$-eigenspace.

\begin{proposition}\label{interpo}
Let $\eta$ be a Grossencharacter over $K$ of type $(-r,0)$ with $r\ge1$. Let $\theta$ be a character on $G_n$ and write
\begin{eqnarray*}
\kappa_\theta:\Qpn\otimes\Fil^0\Dcris(\tilde{V}_\eta(1))&\rightarrow&\CC\otimes \tilde{V}_\eta(1)\\
x\otimes y&\mapsto&\sum_{\sigma\in G_n}\theta(\sigma)\sigma(x)\per(y)
\end{eqnarray*}
where $\per$ is the period map associated to $\eta$ as defined in \cite[\S15.8]{kato04}. Then, we have
$$
\kappa_\theta\circ\exp^*(z_{1,n}(\eta))=L_{\{p\}}(\bar{\eta}\theta,r)\cdot(\gamma')^\pm
$$
where $\pm=\theta(-1)$ and $\gamma'$ denotes the image of $\gamma$ in $\tilde{V}_\eta$.
\end{proposition}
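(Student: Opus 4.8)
The plan is to reduce the statement to the classical explicit reciprocity law for elliptic units, i.e.\ to the interpolation property of the Euler system constructed in \cite{kato04}, \S15. The element $z_\gamma(\eta)$ is built by transporting the elliptic unit class $z_{p^\infty\ff}\in\Hh^1_{p^\infty\ff}(\Zp(1))$ through the chain $\gamma,\Ind,\loc,\Tw_{-1}$; the computation of $\exp^*$ on it should therefore follow by functoriality of the dual exponential under each of these operations. First I would unwind the twist: the map $\Tw_{-1}\colon\HIw(\tilde V_\eta(1))\to\HIw(\tilde V_\eta)$ and the identification $\Dcris(\tilde V_\eta(1))=t^{-1}\Dcris(\tilde V_\eta)\otimes e_1$ are compatible in the way recalled just before the proposition, so it suffices to compute $\exp^*$ on the image $z_{1,n}(\eta)\in H^1(\Qpn,\tilde V_\eta(1))$ of $z(\eta)$ directly in weight-$1$ normalisation, which is exactly how the statement is phrased.

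Next I would pass from $\QQ$ to $K$: since $p$ is inert in $K$, there is a single prime of $K$ above $p$, and $\Qpn\otimes_{\Qp}K$ is a field (or at worst a product of two copies when $n=0$ and $p$ splits in $K(\mu_p)$, which does not occur here). Shapiro's lemma identifies $H^1(\Qpn,\tilde V_\eta(1))$ with $H^1(K\otimes\Qpn,V(\eta)(1))$, and $\Ind$ intertwines the two dual exponentials; likewise $\Fil^0\Dcris(\tilde V_\eta(1))$ matches $\Fil^0\Dcris(V(\eta)(1))$ over $K\otimes\Qpn$. Under this identification $z_{1,n}(\eta)$ becomes the image of the elliptic-unit class in $H^1$ over the ray class field, twisted by $\eta$. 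Then I would invoke the explicit reciprocity law of Kato (\cite[\S15.5--15.9]{kato04}; equivalently the Coates--Wiles / Rubin formula for elliptic units): applying $\exp^*$ to the elliptic unit Euler system and pairing against the period map $\per$ associated to $\eta$ produces the $p$-depleted $L$-value $L_{\{p\}}(\bar\eta,r)$ at the relevant point, because $\eta$ has type $(-r,0)$ so that $V(\eta)(1)$ has Hodge--Tate weight making $r$ the near-central critical integer. Summing over $\sigma\in G_n$ against $\theta$ — which is exactly what $\kappa_\theta$ does — twists $\eta$ by $\theta$ and yields $L_{\{p\}}(\bar\eta\theta,r)$.

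Finally I would pin down the eigenspace. The complex conjugation $\iota$ acts on $\tilde V_\eta=\IndQK V(\eta)$ by swapping the two $G_K$-lines, so its $\pm1$-eigenspaces are each one-dimensional, spanned by $\gamma'\pm\iota\gamma'$; the sign that appears is governed by $\theta(-1)$ because $\kappa_\theta$ only sees the $\theta$-component of the Galois average and $\iota$ maps to $-1$ in $G_n$. Tracking this sign is largely bookkeeping once the identification with the $K$-side is fixed. I expect the main obstacle to be precisely this translation step: making the diagram relating $\exp^*$ over $\Qpn$, Shapiro's lemma, the period map $\per$ of \cite[\S15.8]{kato04}, and Kato's normalisation of elliptic units commute \emph{on the nose}, including the constants and the choice of $p$-power roots of unity hidden in $\Tw_{-1}$ and in $\tau(\theta^{-1})$ downstream. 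The $L$-value computation itself is then a direct citation, but getting the period, the Gauss-sum-free normalisation here, and the Euler factor at $p$ (absorbed into the $\{p\}$-depletion) all to line up is where the care is needed.
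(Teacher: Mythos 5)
Your proposal is correct and follows essentially the same route as the paper: the paper's entire proof is the citation \cite[\S 15.12]{kato04}, i.e.\ Kato's explicit reciprocity law for the elliptic-unit Euler system, which is exactly the result you reduce to after unwinding the twist, Shapiro's lemma, and the sign bookkeeping. Your sketch is just an expanded version of that citation (your pointer to \S 15.5--15.9 should really be to \S 15.12, where the interpolation formula is proved in the stated form), so no further comparison is needed.
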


\begin{proof}
\cite[\S15.12]{kato04}.
\end{proof}


\subsection{The symmetric square of a CM modular form}\label{gosplit}

Let $f$ be a modular form as in \S\ref{modularforms}. By comparing the eigenvalues of Frobenii, we see that the representation $V_f$ is isomorphic to $\tilde{V}_\phi=\IndQK V(\phi)$. Therefore, $V_f$ admits a basis $x,y$ such that for $\sigma\in G_\QQ$, the matrix of $\rho_f(\sigma)$ with respect to this basis is given by
\begin{equation}
\rho_f(\sigma)=
\begin{pmatrix}
\tilde{\phi}(\sigma)&0\\
0&\tilde{\phi}(\iota\sigma\iota)
\end{pmatrix}\label{form1}
\end{equation}
if $\sigma\in G_K$. Otherwise,
\begin{equation}\label{form2}
\rho_f(\sigma)=
\begin{pmatrix}
0&\tilde{\phi}(\iota\sigma'\iota)\\
\tilde{\phi}(\sigma')&0
\end{pmatrix}
\end{equation}
where $\sigma=\iota\sigma'$ with $\sigma'\in G_K$. 

\begin{lemma}\label{det}
The determinant of $\rho_f$ is given by
$$
\det(\rho_f)(\sigma)=
\begin{cases}
\tilde{\phi}(\sigma)\tilde{\phi}(\iota\sigma\iota)&\text{if $\sigma\in G_K$}\\
-\tilde{\phi}(\sigma')\tilde{\phi}(\iota\sigma'\iota)&\text{if $\sigma=\iota\sigma'$ where $\sigma'\in G_K$.}
\end{cases}
$$
\end{lemma}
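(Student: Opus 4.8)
The plan is to read off both cases directly from the explicit matrix descriptions \eqref{form1} and \eqref{form2}: since the determinant of a $2\times 2$ matrix is elementary, the lemma is a one-line computation once we are careful about the two ranges of $\sigma$. I would begin by recalling that $\rho_f$ is a genuine homomorphism $G_\QQ\to\mathrm{GL}(V_f)$ (this is what \eqref{form1}--\eqref{form2} encode via the model $V_f\cong\IndQK V(\phi)$), so $\det\rho_f\colon G_\QQ\to E^\times$ is automatically a character and it suffices to evaluate it on an arbitrary $\sigma$, splitting according to whether or not $\sigma\in G_K$.

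For $\sigma\in G_K$, formula \eqref{form1} says that $\rho_f(\sigma)$ is the diagonal matrix with entries $\tilde{\phi}(\sigma)$ and $\tilde{\phi}(\iota\sigma\iota)$, so $\det\rho_f(\sigma)=\tilde{\phi}(\sigma)\tilde{\phi}(\iota\sigma\iota)$, which is the first case. For $\sigma\in G_\QQ\setminus G_K$ I would first record the (routine) fact that such a $\sigma$ is written uniquely as $\sigma=\iota\sigma'$ with $\sigma'=\iota\sigma\in G_K$, using $\iota^2=1$, $\iota\notin G_K$, and $[G_\QQ:G_K]=2$; this fixes the normalisation implicit in \eqref{form2}. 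Then \eqref{form2} exhibits $\rho_f(\sigma)$ as the anti-diagonal matrix with off-diagonal entries $\tilde{\phi}(\iota\sigma'\iota)$ and $\tilde{\phi}(\sigma')$, whose determinant is $-\tilde{\phi}(\iota\sigma'\iota)\tilde{\phi}(\sigma')=-\tilde{\phi}(\sigma')\tilde{\phi}(\iota\sigma'\iota)$ (the factors commute, being values of characters in $E^\times$), giving the second case.

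There is essentially no obstacle here: the only mild point of care is making sure that $\sigma'$ in \eqref{form2} is the uniquely determined element $\iota\sigma$, so the stated formula is unambiguous. As an optional consistency check one can verify directly that the two-case function defined in the lemma is multiplicative on $G_\QQ$ (matching the fact that $\det\rho_f$ must be a character), and that it agrees with the expected $\det\rho_f=\epsilon\chi^{k-1}$; this cross-check is reassuring but is not required for the proof of the statement itself.
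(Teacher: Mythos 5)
Your proof is correct and matches the paper's, which simply notes the lemma is immediate from \eqref{form1} and \eqref{form2}: the diagonal matrix gives the product of its entries, and the anti-diagonal matrix gives minus the product of its off-diagonal entries. The extra remarks on the uniqueness of $\sigma'$ and the consistency check are fine but not needed.
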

\begin{proof}
This is immediate from \eqref{form1} and \eqref{form2}.
\end{proof}

\begin{proposition}\label{2reps}
As a $G_\QQ$-representation, $\symV$ decomposes into
\[
\symV\cong V_1\oplus V_2
\]
where $\rho_i:G_\QQ\rightarrow \text{GL}(V_i)$ is an $i$-dimensional representation of $G_\QQ$ for $i=1,2$. Moreover, 
\begin{eqnarray}
\rho_1&\cong& \varepsilon_K\cdot\det(\rho_f)=\varepsilon_K\cdot\epsilon\cdot \chi^{k-1},\label{V1}\\
\rho_2&\cong& \tilde{V}_{\phi^2}.
\end{eqnarray}
\end{proposition}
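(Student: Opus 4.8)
The plan is to compute the symmetric square of the explicit matrix realization of $\rho_f$ given in \eqref{form1} and \eqref{form2}, and to read off the two summands directly. Since $V_f$ has basis $x,y$, the space $\symV$ has basis $x^2, xy, y^2$. The key structural observation is that the line spanned by $xy$ is $G_\QQ$-stable: for $\sigma\in G_K$, equation \eqref{form1} is diagonal so $x\mapsto\tilde\phi(\sigma)x$, $y\mapsto\tilde\phi(\iota\sigma\iota)y$, hence $xy\mapsto\tilde\phi(\sigma)\tilde\phi(\iota\sigma\iota)\,xy$, while for $\sigma=\iota\sigma'$ equation \eqref{form2} is anti-diagonal, so it swaps the lines $\CC_p x$ and $\CC_p y$ up to scalars and therefore sends $xy\mapsto \tilde\phi(\sigma')\tilde\phi(\iota\sigma'\iota)\,xy$. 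Thus $V_1:=\CC_p\cdot xy$ is a one-dimensional subrepresentation, and comparing these scalars with Lemma~\ref{det} shows the character by which $G_\QQ$ acts on $V_1$ is $\varepsilon_K\cdot\det(\rho_f)$: on $G_K$ it is $\det\rho_f$ and $\varepsilon_K=1$, while on the nontrivial coset it is $-\det\rho_f$ and $\varepsilon_K=-1$, so the two signs cancel. This gives \eqref{V1} once we recall $\det(\rho_f)=\epsilon\cdot\chi^{k-1}$, which is the standard determinant formula for the representation attached to a newform of weight $k$ and character $\epsilon$.

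Next I would identify $V_2$ as the complement $\mathrm{span}(x^2,y^2)$ and show it is $G_\QQ$-stable and isomorphic to $\tilde V_{\phi^2}=\IndQK V(\phi^2)$. On $G_K$ the matrices \eqref{form1} act diagonally, so $x^2\mapsto\tilde\phi(\sigma)^2 x^2$ and $y^2\mapsto\tilde\phi(\iota\sigma\iota)^2 y^2$; note $\tilde\phi(\sigma)^2=\widetilde{\phi^2}(\sigma)$ and $\tilde\phi(\iota\sigma\iota)^2=\widetilde{\phi^2}(\iota\sigma\iota)$, which is precisely the diagonal form \eqref{form1} for $\tilde V_{\phi^2}$ with basis $x^2,y^2$. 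On the nontrivial coset, \eqref{form2} sends $x\mapsto\tilde\phi(\sigma')y$ and $y\mapsto\tilde\phi(\iota\sigma'\iota)x$, hence $x^2\mapsto\tilde\phi(\sigma')^2 y^2$ and $y^2\mapsto\tilde\phi(\iota\sigma'\iota)^2 x^2$, which is exactly the anti-diagonal form \eqref{form2} for $\tilde V_{\phi^2}$. Since $\symV=V_1\oplus V_2$ as $G_\QQ$-modules (the decomposition of the $3$-dimensional space into the stable line and the stable plane is a direct sum of representations), the proposition follows.

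The one point requiring a little care, rather than the matrix bookkeeping itself, is checking that $V_1\oplus V_2$ is genuinely a direct-sum decomposition of $G_\QQ$-representations and not merely a filtration; this is where Hypothesis~\ref{hyp1} may enter, since it guarantees the characters $\varepsilon_K$ and $\epsilon$ are distinct and hence $V_1\not\cong$ a subquotient that could obstruct splitting, or more simply it ensures $\tilde V_{\phi^2}$ does not contain $\varepsilon_K\epsilon\chi^{k-1}$ as a constituent. In the present situation, however, the decomposition is visibly a direct sum because the two coordinate subspaces $\CC_p\cdot xy$ and $\mathrm{span}(x^2,y^2)$ are literally complementary and each is preserved by every $\rho_f(\sigma)$, so no extension problem arises and Hypothesis~\ref{hyp1} is not needed for this particular statement. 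The main obstacle, such as it is, is purely organizational: keeping the two cases ($\sigma\in G_K$ versus $\sigma\notin G_K$) straight and matching the resulting scalars against the formulas of Lemma~\ref{det} and the normalization of $\widetilde{\phi^2}$.
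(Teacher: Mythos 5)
Your proposal is correct and follows essentially the same route as the paper: the same explicit basis $x\otimes x$, $y\otimes y$, $x\otimes y+y\otimes x$ of $\symV$, the same case-by-case computation on $G_K$ and its nontrivial coset via \eqref{form1} and \eqref{form2}, and the same appeal to Lemma~\ref{det} to identify $\rho_1$ and to the anti-diagonal/diagonal matrices to identify $\rho_2$ with $\tilde V_{\phi^2}$. Your closing observation is also right: the splitting is immediate because both coordinate subspaces (over $E$, not $\CC_p$, but this is immaterial) are stable and complementary, and Hypothesis~\ref{hyp1} plays no role here.
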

\begin{proof}
It is clear that $x\otimes x$, $y\otimes y$, $x\otimes y+y\otimes x$ form a basis of $\symV$. By formulae \eqref{form1} and \eqref{form2}, $\sigma\cdot(x\otimes y+y\otimes x)$ is a multiple of $x\otimes y+y\otimes x$ for any $\sigma\in G_{\QQ}$. Hence, it gives an one-dimensional sub-representation $V_1$ of $\symV$. More explicitly, we have
$$
\sigma\cdot(x\otimes y+y\otimes x)=
\begin{cases}
\tilde{\phi}(\sigma)\tilde{\phi}(\iota\sigma\iota)(x\otimes y+y\otimes x)&\text{if $\sigma\in G_K$}\\
\tilde{\phi}(\sigma')\tilde{\phi}(\iota\sigma'\iota)(x\otimes y+y\otimes x)&\text{if $\sigma=\iota\sigma'$ where $\sigma'\in G_K$.}
\end{cases}
$$
Therefore, we deduce \eqref{V1} from Lemma~\ref{det}.

 It is also clear that $x\otimes x$, $y\otimes y$ form a basis of a $2$-dimensional representation $\rho_2:G_\QQ\rightarrow\text{GL}(V_2)$. With respect to this basis, 
\[
\rho_2(\sigma)=
\begin{pmatrix}
\tilde{\phi}^2(\sigma)&0\\
0&\tilde{\phi}^2(\iota\sigma\iota)
\end{pmatrix}
\]
if $\sigma\in G_K$. Otherwise, if $\sigma=\iota\sigma'$ where $\sigma'\in G_K$, then 
\[
\rho_2(\sigma)=
\begin{pmatrix}
0&\tilde{\phi}^2(\iota\sigma'\iota)\\
\tilde{\phi}^2(\sigma')&0
\end{pmatrix}.
\]
Therefore, $V_2\cong \IndQK V(\phi^2)$ as required.
\end{proof}

\begin{corollary}\label{Lfact}
The complex $L$ function admits a factorisation
\[
L(\Sym^2f,s)=L(\phi^2,s)L(\varepsilon_K\cdot\epsilon,s-k+1).
\]
\end{corollary}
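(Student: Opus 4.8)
The plan is to read the factorisation directly off the decomposition of $\symV$ furnished by Proposition~\ref{2reps}, using only the standard formalism of $L$-functions: additivity on direct sums, inductivity from $K$ to $\QQ$, and the shift of the argument under a Tate twist. First I would recall that $L(\Sym^2 f,s)$ is the (automorphic) symmetric-square $L$-function of $f$, which agrees Euler factor by Euler factor with the $L$-function of the Galois representation $\symV$. In the present CM situation this identification is unproblematic: away from $Np$ the Euler factor of $L(\Sym^2 f,s)$ at a prime $\ell$ is the symmetric square of $\{1-\alpha_1(\ell)T,\,1-\alpha_2(\ell)T\}$, as recalled in the introduction, which is exactly the reciprocal characteristic polynomial of Frobenius acting on $\symV$; at the remaining finite places and at the archimedean place both sides are abelian, so the Artin formalism applies.

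Granting this, by Proposition~\ref{2reps} we have $\symV\cong V_1\oplus V_2$, hence $L(\Sym^2 f,s)=L(V_1,s)\,L(V_2,s)$. For the second factor, $V_2\cong\tilde{V}_{\phi^2}=\IndQK V(\phi^2)$, so by inductivity of $L$-functions $L(V_2,s)=L(V(\phi^2)/K,s)$, which is by definition the Hecke $L$-function $L(\phi^2,s)$ of the Grossencharacter $\phi^2$ of $K$. For the first factor, \eqref{V1} identifies $V_1$ with the one-dimensional representation $\varepsilon_K\cdot\epsilon\cdot\chi^{k-1}$, i.e.\ the Dirichlet character $\varepsilon_K\cdot\epsilon$ twisted by the $(k-1)$st power of the cyclotomic character; since twisting by $\chi^{k-1}$ shifts the argument of the $L$-function by $k-1$, this contributes $L(\varepsilon_K\cdot\epsilon,\,s-k+1)$. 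Assembling the two factors gives the claim.

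I expect there to be no genuine obstacle here beyond Proposition~\ref{2reps} itself. The only points requiring (entirely routine) care are the bookkeeping of normalisations—so that the cyclotomic twist produces precisely the shift $s\mapsto s-k+1$ and not some other shift, and likewise that the Frobenius on $V(\phi^2)$ is normalised to give $L(\phi^2,s)$ rather than its inverse—and the comparison of $L(\Sym^2 f,s)$ with $L(\symV,s)$ at the ramified primes and at infinity, which in the CM case reduces to the abelian Artin formalism. The eigenvalue computations underlying these comparisons were already carried out in the proof of Proposition~\ref{2reps} and in Lemma~\ref{det}, so the corollary really is immediate.
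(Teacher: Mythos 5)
Your argument is correct and is essentially the paper's own proof: both read the factorisation off the decomposition $\symV\cong V_1\oplus V_2$ of Proposition~\ref{2reps} by matching Euler factors, with the identification $V_2\cong\IndQK V(\phi^2)$ giving $L(\phi^2,s)$ and the twist $\varepsilon_K\cdot\epsilon\cdot\chi^{k-1}$ giving the shifted Dirichlet factor. The paper merely compresses this into the remark that the nontrivial Euler factors occur at $q\nmid N$ and agree on both sides; your extra care about inductivity and the normalisation of the cyclotomic shift is the same computation spelled out.
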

\begin{proof}
The $L$-function of $\Sym^2f$ only have non-trivial Euler factors at $q\nmid N$. The Euler factors on the two sides of the equation at $q$ agree by Proposition~\ref{2reps}, so we are done.
\end{proof}


\subsection{The symmetric square as a $G_{\Qp}$-representation}

We study the representation $\symV$ restricted to $G_{\Qp}$. More specifically, we study $\Dcris(\Sym^2V_f)$.

\begin{lemma}
As $G_{\Qp}$-representations, both $V_1$ and $V_2$ are crystalline.
\end{lemma}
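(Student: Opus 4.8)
The plan is to reduce the crystallinity of $V_1$ and $V_2$ to the crystallinity of representations we already understand. For $V_1$, the point is that by \eqref{V1} we have $\rho_1\cong\varepsilon_K\cdot\epsilon\cdot\chi^{k-1}$, so $V_1$ restricted to $G_{\Qp}$ is a Tate twist of a character of finite order. The character $\varepsilon_K\cdot\epsilon$ is unramified at $p$: since $f$ has level $N$ with $p\nmid N$, the nebentypus $\epsilon$ is unramified at $p$, and $\varepsilon_K$ is unramified at $p$ because $p$ is inert (hence unramified) in $K$. An unramified character is crystalline, and the Tate twist $\chi^{k-1}$ is crystalline (with $\Dcris$ spanned by $t^{-(k-1)}$ times a basis vector); since a twist of a crystalline representation by $\chi^j$ is again crystalline, $V_1|_{G_{\Qp}}$ is crystalline.

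For $V_2\cong\tilde V_{\phi^2}=\IndQK V(\phi^2)$, I would argue as follows. Since $p$ is inert in $K$, the unique prime of $K$ above $p$ is $K_{\mathfrak p}=K\otimes\Qp$, an unramified quadratic extension of $\Qp$, and $\Ind_K^{\QQ}$ of a $G_K$-representation restricted to $G_{\Qp}$ is $\Ind_{G_{K_{\mathfrak p}}}^{G_{\Qp}}$ of the localisation of $V(\phi^2)$ at $\mathfrak p$. Now $V(\phi^2)$ is the character $\tilde{\phi^2}$, and $\phi$ has conductor $\ff$; since $f$ is non-ordinary at $p$ and $p$ is inert, $\ff$ is prime to $p$, so $\tilde{\phi^2}$ is unramified at $\mathfrak p$, hence crystalline as a $G_{K_{\mathfrak p}}$-representation. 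It remains to observe that induction along an \emph{unramified} extension preserves crystallinity: this is standard, since for $L/\Qp$ unramified one has $\Dcris^{\Qp}(\Ind_{G_L}^{G_{\Qp}}W)\cong\Dcris^{L}(W)$ as $\Qp$-vector spaces (with the appropriate restriction of scalars), so the dimension count characterising crystallinity is preserved. Thus $V_2|_{G_{\Qp}}$ is crystalline.

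Alternatively, and perhaps more cleanly, I could bypass the induction-of-crystalline discussion entirely by noting that $\phi$ is of type $(-k+1,0)$, so $\phi^2$ is of type $(-2k+2,0)$, and a Grossencharacter of an imaginary quadratic field unramified at a prime above $p$ gives rise, after the usual recipe, to a $p$-adic Galois character of Hodge–Tate (indeed crystalline) type; the associated two-dimensional induced representation $\tilde V_{\phi^2}$ of $G_{\Qp}$ then inherits crystallinity from the classical theory of CM representations (this is, for instance, implicit in the period maps of \cite[\S15.8]{kato04} used in Proposition~\ref{interpo}). Either way, the argument is short.

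The only genuine point requiring care — the ``main obstacle,'' though it is mild — is the behaviour of $\Dcris$ under induction along the unramified quadratic extension $K_{\mathfrak p}/\Qp$, namely checking that $\dim_{\Qp}\Dcris(V_2|_{G_{\Qp}})$ attains the full value $2$ rather than dropping. I would handle this by the standard identity relating $\Dcris$ over $\Qp$ of an induced representation to $\Dcris$ over the unramified extension of the original representation, which holds precisely because $K_{\mathfrak p}/\Qp$ is unramified so that $\BB_{\cris}$ is the same ring computed over either field; the crystallinity of $\tilde{\phi^2}$ as a $G_{K_{\mathfrak p}}$-character (being unramified, hence even finite-image-on-inertia and Hodge–Tate of weight $0$) then gives the required dimension. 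I expect the write-up to be just a few lines invoking these facts.
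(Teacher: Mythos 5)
Your approach is genuinely different from the paper's. The paper does not look inside $V_1$ or $V_2$ at all: it simply observes that $\Dcris$ commutes with direct sums, that $\symV$ is crystalline (so $\dim_E\Dcris(\symV)=3$), and that $\dim_E\Dcris(V_i)\le\dim_E V_i$ always; the forced equality $\dim_E\Dcris(V_i)=\dim_E V_i$ then gives crystallinity of both summands in two lines. Your proof instead verifies crystallinity of each piece directly from its explicit description. The argument for $V_1$ (an unramified-at-$p$ finite-order character times $\chi^{k-1}$) is correct, and the overall strategy for $V_2$ --- induce a crystalline $G_{K_{\mathfrak p}}$-character along the unramified extension $K_{\mathfrak p}/\Qp$ --- is sound and would also give the filtered $\varphi$-module structure for free, so it is more informative, if longer, than the paper's dimension count.

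There is, however, a real error in the way you justify crystallinity of the $G_{K_{\mathfrak p}}$-character. Twice you claim that $\tilde{\phi}^2$ is \emph{unramified} at $\mathfrak p$, and once that it has finite image on inertia and Hodge--Tate weight $0$. This is false for $k\ge2$. What is true is that the Grossencharacter $\phi^2$ is unramified at $\mathfrak p$ (since $p\nmid N$ forces $\mathfrak p\nmid\ff$), but its $p$-adic avatar $\tilde{\phi}^2$, attached to a Hecke character of infinity type $(-2k+2,0)$, is infinitely ramified at $\mathfrak p$ and has a non-zero Hodge--Tate weight; it is crystalline, not unramified. (Compare the paper's Corollary~\ref{HT}: $V_2$ as a $G_{\Qp}$-representation has Hodge--Tate weights $0$ and $2-2k$, which already rules out your claim.) Your second, parenthetical formulation --- that the standard recipe attaches a \emph{crystalline} $p$-adic character to an algebraic Hecke character unramified at $\mathfrak p$ --- is the correct statement, and once you use it uniformly the argument goes through. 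As written, though, the step ``unramified, hence crystalline'' is supported by a false premise, so it needs to be replaced by the citation to the theory of $p$-adic avatars of Grossencharacters rather than by unramifiedness.
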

\begin{proof}
 The functor $\Dcris$ is compatible with taking direct sums, so we can identify $\Dcris(V_i)$ as a filtered sub-$\vp$-module of $\Dcris(V_f)$ for $i=1,2$. That is,
\begin{equation}\label{split}
\Dcris(\symV)\cong\Dcris(V_1)\oplus\Dcris(V_2).
\end{equation}
Since $\symV$ is crystalline, so $\Dcris(\symV)$ is of dimension $3$ over $E$. Hence, $\Dcris(V_i)$ must have dimension $i$ and $V_i$ is crystalline for $i=1,2$.
\end{proof}

We now give explicit descriptions of $\Dcris(V_1)$ and $\Dcris(V_2)$.

Recall that $\Dcris(V_f)$ is a 2-dimensional $E$-vector space with Hodge-Tate weights $0$ and $1-k$. Moreover, the de Rham filtration is given by
    \begin{equation}
     \Fil^i\Dcris(V_f)=
     \left\{
     \begin{array}{ll}
      E\omega\oplus E\vp(\omega)         & \text{if $i\le0$}\\
      E\omega                     & \text{if $1\le i\le k-1$}\\
      0                          & \text{if $i\ge k$}
     \end{array}\right.
    \end{equation}
for some $\omega\ne0$. The action of $\vp$ on $\Dcris(V_f)$ satisfies $\vp^2=-\epsilon(p)p^{k-1}$. Therefore,

\begin{equation}\label{fil}
     \Fil^i\Dcris(\Sym^2(V_f))=
     \left\{
     \begin{array}{ll}
        \Dcris(\Sym^2(V_f)) & \text{if $i\le 0$} \\
      E(\omega\otimes\omega)\oplus E(\vp(\omega)\otimes\omega+\omega\otimes\vp(\omega))         & \text{if $1\le i\le k-1$}\\
      E(\omega\otimes\omega)                     & \text{if $k\le i\le 2k-2$}\\
      0                          & \text{if $i\ge 2k-1$}
     \end{array}\right.
    \end{equation}

Since $\vp^2(\omega)=-\epsilon(p)p^{k-1}\omega$, we have
\[
\vp\Big(\omega\otimes\vp(\omega)+\vp(\omega)\otimes\omega\Big)=-\epsilon(p)p^{k-1}\Big(\omega\otimes\vp(\omega)+\vp(\omega)\otimes\omega\Big).
\]
In particular, $\omega\otimes\vp(\omega)+\vp(\omega)\otimes\omega$ is an eigenvector of $\vp$. Therefore, we have a decomposition of filtered $\vp$-modules
$$
\Dcris(\Sym^2(V_f))=\Big(E(\omega\otimes\omega)\oplus E(\vp(\omega)\otimes\vp(\omega))\Big)\oplus\Big(E(\omega\otimes\vp(\omega)+\vp(\omega)\otimes\omega)\Big).
$$

\begin{proposition}\label{des}
As filtered $\vp$-modules, we have
\begin{eqnarray*}
\Dcris(V_1)&=&E(\vp(\omega)\otimes\omega+\omega\otimes\vp(\omega)),  \\
\Dcris(V_2)&=&E(\omega\otimes\omega)\oplus E(\vp(\omega)\otimes\vp(\omega)).
\end{eqnarray*}
\end{proposition}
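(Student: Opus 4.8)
My plan is to apply the functor $\Dcris$ to the splitting $\symV\cong V_1\oplus V_2$ of Proposition~\ref{2reps} and then locate the two summands inside $\Dcris(\symV)$ by simultaneously matching the $\vp$-action and the filtration \eqref{fil}; the point to bear in mind is that neither structure alone determines them. Since $\Dcris$ is an additive functor into the category of filtered $\vp$-modules, the isomorphism \eqref{split} is one of filtered $\vp$-modules, so $\Dcris(V_1)$ and $\Dcris(V_2)$ are filtered $\vp$-submodules of $\Dcris(\symV)$ of dimensions $1$ and $2$ (by the preceding lemma); in particular each is $\vp$-stable and carries the induced filtration, $\Fil^j\Dcris(V_i)=\Dcris(V_i)\cap\Fil^j\Dcris(\symV)$. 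Write $u=\omega\otimes\omega$, $v=\vp(\omega)\otimes\vp(\omega)$ and $w=\vp(\omega)\otimes\omega+\omega\otimes\vp(\omega)$; from $\vp^2(\omega)=-\epsilon(p)p^{k-1}\omega$ one reads off $\vp(u)=v$, $\vp(v)=\epsilon(p)^2p^{2k-2}u$ and $\vp(w)=-\epsilon(p)p^{k-1}w$. In particular $v\notin Eu\oplus Ew$, and the only $\vp$-stable line inside the plane $Eu\oplus Ew$ is $Ew$: if $\vp(\alpha u+\beta w)=\alpha v-\beta\epsilon(p)p^{k-1}w$ is a scalar multiple of $\alpha u+\beta w$ then $\alpha=0$.

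I would then pin down $\Dcris(V_1)$. By \eqref{V1}, $V_1\cong\varepsilon_K\cdot\det(\rho_f)$, and since $\varepsilon_K$ is unramified at $p$ the filtration on $\Dcris(V_1)$ coincides with that on $\Dcris(\det\rho_f)\cong\det\Dcris(V_f)$; from the description of $\Fil^\bullet\Dcris(V_f)$ one reads off $\Fil^{k-1}\Dcris(V_1)=\Dcris(V_1)$ and $\Fil^k\Dcris(V_1)=0$. Hence $\Dcris(V_1)\subseteq\Fil^{k-1}\Dcris(\symV)=Eu\oplus Ew$ by \eqref{fil}, and being a $\vp$-stable line it must be $Ew$ by the last observation of the previous paragraph. (One can even skip that filtration computation: were $\Dcris(V_1)$ not contained in $\Fil^1\Dcris(\symV)=Eu\oplus Ew$, then $\Fil^1\Dcris(V_1)=0$, so the filtered decomposition would force $Eu\oplus Ew=\Fil^1\Dcris(V_2)\subseteq\Dcris(V_2)$ and hence $\Dcris(V_2)=Eu\oplus Ew$, which is impossible since $\vp(u)=v\notin Eu\oplus Ew$ shows this plane is not $\vp$-stable.) Therefore $\Dcris(V_1)=E(\vp(\omega)\otimes\omega+\omega\otimes\vp(\omega))$.

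Finally, for $\Dcris(V_2)$ I would look at the top of the filtration: as $k\ge2$, \eqref{fil} gives $\Fil^k\Dcris(\symV)=E(\omega\otimes\omega)=Eu$, and the filtered decomposition yields $Eu=\Fil^k\Dcris(V_1)\oplus\Fil^k\Dcris(V_2)=\big(Ew\cap Eu\big)\oplus\Fil^k\Dcris(V_2)=\Fil^k\Dcris(V_2)$, so $u\in\Dcris(V_2)$; by $\vp$-stability $v=\vp(u)\in\Dcris(V_2)$ as well, and since $u,v$ are independent and $\dim_E\Dcris(V_2)=2$ we conclude $\Dcris(V_2)=Eu\oplus Ev=E(\omega\otimes\omega)\oplus E(\vp(\omega)\otimes\vp(\omega))$. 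The main obstacle is exactly the feature just exploited: $\vp$ acts semisimply on $\Dcris(\symV)$ with $-\epsilon(p)p^{k-1}$ an eigenvalue of multiplicity two, so the $\vp$-structure alone leaves a one-parameter family of candidates for $\Dcris(V_1)$ (and correspondingly for $\Dcris(V_2)$), and it is only the Hodge filtration \eqref{fil} that selects the correct splitting; in a version of the argument phrased via Hodge--Tate weights one must in addition check that the normalization of $\Dcris$ makes the weight of $\det\rho_f$ match the jump appearing in \eqref{fil}.
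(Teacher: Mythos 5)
Your argument is correct and follows essentially the same route as the paper's own proof: both pin down $\Dcris(V_1)$ by combining the Hodge--Tate weight of $V_1=\varepsilon_K\epsilon\chi^{k-1}$ (placing $\Dcris(V_1)$ inside the plane $\Fil^{k-1}\Dcris(\symV)=E(\omega\otimes\omega)\oplus E(\vp(\omega)\otimes\omega+\omega\otimes\vp(\omega))$) with the $\vp$-structure, and then read off $\Dcris(V_2)$ from the filtered splitting \eqref{split}. You are somewhat more explicit than the paper about the point that the $\vp$-eigenvalue $-\epsilon(p)p^{k-1}$ alone is insufficient (its eigenspace is two-dimensional) and that the filtration is what breaks the tie, and you spell out the "second equality is automatic" step via $\Fil^k$; these are expansions of, not departures from, the paper's argument.
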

\begin{proof}
By \eqref{V1}, $\rho_1=\varepsilon_K\cdot\epsilon\cdot\chi^{k-1}$. Since $p$ is inert in $K$, $\varepsilon_K(p)=-1$. The Hodge-Tate weight of $V_1$ is therefore $1-k$ and $\vp$ acts on $\Dcris(V_1)$ as multiplication by $-\epsilon(p)p^{k-1}$. This proves the first equality. The second equality is then automatic by \eqref{split}.
\end{proof}
\begin{remark}
Such a decomposition of $G_{\Qp}$-representations is in fact possible for $f$ without CM (see \cite[\S2.2]{perrinriou98}).
\end{remark}

\begin{corollary}\label{eval}
The eigenvalues of $\vp$ on $\Dcris(V_2)$ are $\pm\epsilon(p)p^{k-1}$.
\end{corollary}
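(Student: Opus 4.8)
The plan is to read the Frobenius eigenvalues off directly from the explicit basis of $\Dcris(V_2)$ provided by Proposition~\ref{des}, using only the relation $\vp^2=-\epsilon(p)p^{k-1}$ on $\Dcris(V_f)$. First I would record that, because $\vp$ is multiplicative on tensor products of Dieudonn\'{e} modules and $\vp^2(\omega)=-\epsilon(p)p^{k-1}\omega$, the endomorphism $\vp$ sends the basis $\{\omega\otimes\omega,\ \vp(\omega)\otimes\vp(\omega)\}$ of $\Dcris(V_2)$ to $\vp(\omega\otimes\omega)=\vp(\omega)\otimes\vp(\omega)$ and $\vp\big(\vp(\omega)\otimes\vp(\omega)\big)=\vp^2(\omega)\otimes\vp^2(\omega)=\epsilon(p)^2p^{2k-2}\,\omega\otimes\omega$.

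Next I would write the matrix of $\vp$ on $\Dcris(V_2)$ in this basis, namely $\left(\begin{smallmatrix} 0 & \epsilon(p)^2 p^{2k-2} \\ 1 & 0 \end{smallmatrix}\right)$, and compute its characteristic polynomial $T^2-\epsilon(p)^2p^{2k-2}=(T-\epsilon(p)p^{k-1})(T+\epsilon(p)p^{k-1})$; the eigenvalues are therefore $\pm\epsilon(p)p^{k-1}$. If one prefers an invariant formulation, one can instead exhibit $\omega\otimes\omega\pm\epsilon(p)^{-1}p^{1-k}\,\vp(\omega)\otimes\vp(\omega)$ as eigenvectors for these two eigenvalues.

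I do not expect any real obstacle here: this is an immediate consequence of Proposition~\ref{des} together with $\vp^2=-\epsilon(p)p^{k-1}$, and the only point requiring a little care is sign bookkeeping. In particular one of the two eigenvalues, $-\epsilon(p)p^{k-1}$, coincides with the Frobenius eigenvalue on $\Dcris(V_1)$ from Proposition~\ref{des}, so $\vp$ fails to act semisimply on all of $\Dcris(\symV)$; this causes no difficulty on the $V_2$-component. As a sanity check, the product of the two eigenvalues is $-\epsilon(p)^2p^{2k-2}=\det\vp$ on $\Dcris(V_2)$, which is consistent with the Euler factor of $L(\phi^2,s)$ at the inert prime $p$ appearing in Corollary~\ref{Lfact}.
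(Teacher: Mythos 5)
Your argument is exactly the paper's: write $\vp$ in the basis $\omega\otimes\omega$, $\vp(\omega)\otimes\vp(\omega)$ of $\Dcris(V_2)$ from Proposition~\ref{des}, obtaining the matrix $\left(\begin{smallmatrix} 0 & \epsilon(p)^2p^{2k-2}\\ 1 & 0\end{smallmatrix}\right)$, whose eigenvalues are $\pm\epsilon(p)p^{k-1}$; this is correct. One small caveat about your aside: the repetition of the eigenvalue $-\epsilon(p)p^{k-1}$ on $\Dcris(V_1)$ does not make $\vp$ non-semisimple on $\Dcris(\symV)$ --- it is diagonalisable there, with that eigenvalue simply occurring with multiplicity two --- though this has no bearing on the corollary itself.
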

\begin{proof}
By Proposition~\ref{des}, the matrix of $\vp$ with respect to the basis $\omega\otimes\omega$, $\vp(\omega)\otimes\vp(\omega)$ is
$$
\begin{pmatrix}
0 & \epsilon(p)^2p^{2k-2}\\
1 & 0
\end{pmatrix},
$$
hence the result.
\end{proof}

\begin{corollary}\label{HT}
The Hodge-Tate weights of $V_2$ are $0$ and $2-2k$.
\end{corollary}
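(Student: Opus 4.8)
The plan is to restrict the de Rham filtration \eqref{fil} to the direct summand $\Dcris(V_2)$ identified in Proposition~\ref{des} and read off where it jumps.

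First I would record that, by \eqref{split} together with the proof of Proposition~\ref{des}, the subspace $\Dcris(V_2)=E(\omega\otimes\omega)\oplus E(\vp(\omega)\otimes\vp(\omega))$ is a sub-filtered-$\vp$-module of $\Dcris(\Sym^2 V_f)$, so that $\Fil^i\Dcris(V_2)=\Fil^i\Dcris(\Sym^2 V_f)\cap\Dcris(V_2)$. Comparing with \eqref{fil}: for $i\le 0$ this is all of $\Dcris(V_2)$; for $1\le i\le k-1$ it is $E(\omega\otimes\omega)$, because the extra generator $\vp(\omega)\otimes\omega+\omega\otimes\vp(\omega)$ of $\Fil^i\Dcris(\Sym^2 V_f)$ spans $\Dcris(V_1)$, which meets $\Dcris(V_2)$ trivially; for $k\le i\le 2k-2$ it is again $E(\omega\otimes\omega)$, which already lies in $\Dcris(V_2)$; and for $i\ge 2k-1$ it is $0$. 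Thus the dimension of $\Fil^i\Dcris(V_2)$ drops exactly at $i=1$ and $i=2k-1$.

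It then remains to translate these jumps into Hodge-Tate weights in the normalization used in the excerpt. The statement that $V_f$ has Hodge-Tate weights $0$ and $1-k$, together with the fact that $\Fil^\bullet\Dcris(V_f)$ jumps at $i=1$ and $i=k$, pins down the convention: a jump at index $i$ corresponds to the Hodge-Tate weight $1-i$. Applying this to the jumps at $1$ and $2k-1$ yields the Hodge-Tate weights $0$ and $2-2k$. (Alternatively, one reads off from \eqref{fil} that $\Sym^2 V_f$ has Hodge-Tate weights $0,1-k,2-2k$, notes from the proof of Proposition~\ref{des} that $\Dcris(V_1)$ accounts for the weight $1-k$, and subtracts using \eqref{split}.) I do not anticipate any real difficulty here; the only point requiring care is the bookkeeping of the Hodge-Tate normalization, which is already fixed by the treatment of $V_f$ in the excerpt.
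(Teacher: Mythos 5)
Your proposal is correct and follows essentially the same route as the paper, which deduces the corollary directly from the filtration formula \eqref{fil} and the identification of $\Dcris(V_2)$ in Proposition~\ref{des}; you have merely spelled out the intersection of the filtration with the summand $E(\omega\otimes\omega)\oplus E(\vp(\omega)\otimes\vp(\omega))$ and the normalization check, both of which the paper leaves implicit.
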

\begin{proof}
This follows from \eqref{fil} and Proposition~\ref{des}.
\end{proof}

\subsection{The Perrin-Riou pairing}

By Corollary~\ref{eval}, the slope of $\vp$ on $\Dcris(V_2)$ is $k-1$. Hence, by Corollary~\ref{HT}, given any $v\in\Dcris(V_2)$, we have the Perrin-Riou pairing
$$
\LL_{v}:\HIw(V_2^*)\rightarrow\HH_{k-1,E}(G_\infty)
$$
which satisfies the following properties.
\begin{proposition}\label{char}
 For an integer $r\ge0$, we have
$$\chi^r\left(\LL_v(\mathbf z)\right)
=r!\left[\left(1-\frac{\vp^{-1}}{p}\right)(1-\vp)^{-1}(v_{r+1}),\exp^*(z_{-r,0})\right]_0.
$$
Let $\theta$ be a character of $G_n$ which does not factor through $G_{n-1}$ with $n\ge1$, then
$$
\chi^r\theta\left(\LL_v(\mathbf z)\right)
=\frac{r!}{\tau(\theta^{-1})}\sum_{\sigma\in G_n}\theta^{-1}(\sigma)\left[\vp^{-n}(v_{r+1}),\exp^*(z_{-r,n}^\sigma)\right]_n
$$
where $[,]_n$ is the pairing
\[
[,]_n:H^1(\Qpn,V_2(r+1))\times H^1(\Qpn,V_2^*(-r))\rightarrow H^2(\Qpn,E(1))\cong E,
\]
$z_{-r,n}$ denotes the projection of $\Tw_{-r}(z)$ into $H^1(\Qpn,V_2^*(-r))$ and $\tau(\theta^{-1})$ denotes the Gauss sum of $\theta^{-1}$.
\end{proposition}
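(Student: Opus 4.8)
The plan is to obtain both formulae as direct specialisations of Perrin-Riou's reciprocity law (the ``explicit reciprocity law'' / big exponential map), which is precisely the content of \cite[\S3.6]{perrinriou94} or \cite[Theorem~B.5]{colmez98}. The pairing $\LL_v$ is built so that for a class $\mathbf z\in\HIw(V_2^*)$ one has, for every $j\in\ZZ$ and every finite-order character $\theta$ of $\Gamma$ (equivalently of $G_n$), an identity relating the value $\chi^j\theta(\LL_v(\mathbf z))$ to the image of $\mathbf z$ under Bloch--Kato's dual exponential $\exp^*$ paired against the appropriate twist of $v$. Concretely, one writes $V_2^*(-r)$, applies $\Tw_{-r}$ to move $\mathbf z$ into $\HIw(V_2^*(-r))$, and then uses that the cohomology of $V_2^*(-r)$ in degree $1$ lands in $\Fil^0$ so that $\exp^*$ is defined; the compatibility of $\LL_v$ with twisting (which, like $\Tw_j$, depends on the chosen compatible system of $p$-power roots of unity, hence the appearance of the Gauss sum $\tau(\theta^{-1})$) is what converts the abstract reciprocity law into the stated explicit formula.

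First I would set up the twist: since $V_2$ has Hodge--Tate weights $0$ and $2-2k$ and $\vp$ has slope $k-1$, the Tate twist $V_2(r+1)$ has its smaller Hodge--Tate weight equal to $r+1-2k+2$, which is $\le -1$ for all $r\ge 0$, while $V_2^*(-r)$ correspondingly has its cohomology sitting inside $\Fil^0\Dcris$, so $\exp^*(z_{-r,n})$ makes sense and lands in $\Qpn\otimes\Fil^0\Dcris(V_2^*(-r))$. This is exactly the numerical condition that makes the ``de Rham'' side of Perrin--Riou's map the dual exponential rather than the exponential, and it is why only $\exp^*$ (and not $\exp$) appears. Then the unramified ($\theta$ trivial, $n=0$) case is the statement that $\chi^r(\LL_v(\mathbf z))$ equals $r!$ times the pairing of $\exp^*(z_{-r,0})$ with $(1-p^{-1}\vp^{-1})(1-\vp)^{-1}(v_{r+1})$ — the Euler-like factor $(1-p^{-1}\vp^{-1})(1-\vp)^{-1}$ being the usual interpolation factor of the big exponential map at the trivial character, coming from the comparison between $H^1_{\Iw}$ at level $0$ and the crystalline Dieudonné module via the dual of Coleman's / Perrin--Riou's fundamental exact sequence. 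For $n\ge 1$ and $\theta$ primitive of conductor $p^n$, the Euler factors at $p$ disappear (the relevant $(1-\vp)$-type factors are units on the non-trivial $\theta$-components) and are replaced by the single power $\vp^{-n}$ together with the Gauss-sum normalisation $\tau(\theta^{-1})^{-1}$ and the $\theta^{-1}$-average $\sum_{\sigma\in G_n}\theta^{-1}(\sigma)\,[\,\vp^{-n}(v_{r+1}),\exp^*(z_{-r,n}^\sigma)\,]_n$; this is the standard ramified form of the reciprocity law.

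Concretely the proof reduces to: (i) cite Perrin--Riou's construction of $\LL_v$ and its defining interpolation property \cite[\S3.6]{perrinriou94} (or \cite[\S II]{colmez98}); (ii) check the numerical hypotheses — slope $=k-1$ by Corollary~\ref{eval}, Hodge--Tate weights $0,2-2k$ by Corollary~\ref{HT}, and $v\in\Dcris(V_2)$ arbitrary, which is allowed because there is no condition $v\in\Fil^0$ needed on the source side — so that $\LL_v$ takes values in $\HH_{k-1,E}(G_\infty)$ and its values at $\chi^r\theta$ are governed by $\exp^*$; (iii) unwind the effect of the twisting isomorphisms $\Tw_{-r}$ and of the chosen roots of unity to see that the Gauss sum $\tau(\theta^{-1})$ enters exactly as written, and that $v_{r+1}=v\otimes t^{-(r+1)}e_{r+1}$ is the correct image of $v$ in $\Dcris(V_2(r+1))$ as fixed by the notation in the excerpt. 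The main obstacle is purely bookkeeping rather than conceptual: one must be careful that Perrin--Riou's map is normalised so that the ``$v$'' variable sits in $\Dcris(V_2)$ (not its dual) and that the pairing $[\,,\,]_n$ is the cup-product pairing $H^1(\Qpn,V_2(r+1))\times H^1(\Qpn,V_2^*(-r))\to H^2(\Qpn,E(1))\cong E$, so that $\exp^*(z_{-r,n})$ pairs with the \emph{de Rham class} attached to $v_{r+1}$; matching these normalisations — and tracking the resulting powers of $p$, factorials, and the Gauss sum — is the only delicate point, and it is exactly the content of the cited explicit reciprocity laws, so in the write-up I would simply invoke them after verifying the hypotheses.
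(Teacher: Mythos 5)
Your proposal is correct and takes essentially the same approach as the paper: the paper's proof consists of a single citation to \cite[\S3.2]{lei09}, which in turn rests on the very interpolation formulae for Perrin--Riou's big exponential / Coleman map that you invoke directly from \cite{perrinriou94} and \cite{colmez98}, and your surrounding discussion (Hodge--Tate weights via Corollary~\ref{HT}, slope via Corollary~\ref{eval}, the role of $\Tw_{-r}$, the Gauss sum) matches the bookkeeping implicit in that reference. The only point worth flagging is the paper's remark following the proposition: the hypotheses on the eigenvalues of $\vp$ needed in \cite{lei09} (and in some formulations of Perrin--Riou's map) are sidestepped here by constructing $\LL_v$ from $1-\vp$ on the $(\vp,G_\infty)$-module, so when you say ``check the numerical hypotheses'' you should note that this alternative construction is what makes the pairing available without further assumptions on $\vp$.
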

\begin{proof}
See \cite[\S3.2]{lei09}.
\end{proof}

\begin{remark}
The assumption on the eigenvalues of $\vp$ made in \cite{lei09} are not necessary for our purposes here because the Perrin-Riou pairings can be defined by applying $1-\vp$ to the $(\vp,G_\infty)$-module of $V_2^*$ (see \cite{leiloefflerzerbes10} and \cite[\S16.4]{kato04}). 
\end{remark}

We fix a non-zero element $\bar{\omega}\in\Fil^{-1}\Dcris(V_2^*(1))$ and write
\[
\per(\bar{\omega})=\Omega_+(\gamma')^++\Omega_-(\gamma')^-
\]
where $\Omega_\pm\in\CC^\times$ and $\gamma'$ is as in the statement of Proposition~\ref{interpo} for some fixed $\gamma$.

\begin{definition}
Under the choices made above, we define $v^\pm\in\Dcris(V_2)$ by 
$$
v^\pm=\frac{1}{[\vp(\omega)\otimes\vp(\omega),\bar{\omega}]}\Big(\pm\epsilon(p)p^{k-1}\omega\otimes\omega+\vp(\omega)\otimes\vp(\omega)\Big).
$$
\end{definition}

\begin{lemma}\label{properties}
The elements $v^\pm$ satisfy:
\begin{itemize}
\item[(a)] Both $v^{\pm}$ are eigenvalues of $\vp$ with $\vp(v^\pm)=\pm\epsilon(p)p^{k-1}v^\pm$;
\item[(b)] For any $x\in\Fil^0\Dcris(V_2^*(-r))$ and an integer $r$ such that $0\le r\le 2k-3$, we have
$$
[v_{r+1}^+,x]=[v_{r+1}^-,x]
$$
where $[,]$ denotes the pairing
$$
[,]:\Dcris(V_2(r+1))\times\Dcris(V_2^*(-r))\rightarrow \Dcris(E(1))=E\cdot t^{-1}e_1.
$$
\end{itemize}
\end{lemma}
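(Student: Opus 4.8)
The plan is to prove (a) and (b) separately; (a) is essentially a restatement of Corollary~\ref{eval}, while (b) is the substantive point. For part (a), recall from the proof of Corollary~\ref{eval} that with respect to the basis $\omega\otimes\omega$, $\vp(\omega)\otimes\vp(\omega)$ of $\Dcris(V_2)$ the matrix of $\vp$ is $\begin{pmatrix}0&\epsilon(p)^2p^{2k-2}\\1&0\end{pmatrix}$. First I would simply compute $\vp(v^\pm)$: the scalar $1/[\vp(\omega)\otimes\vp(\omega),\bar\omega]$ is a constant, and $\vp\big(\pm\epsilon(p)p^{k-1}\omega\otimes\omega+\vp(\omega)\otimes\vp(\omega)\big)=\pm\epsilon(p)p^{k-1}\vp(\omega)\otimes\vp(\omega)+\epsilon(p)^2p^{2k-2}\omega\otimes\omega=\pm\epsilon(p)p^{k-1}\big(\pm\epsilon(p)p^{k-1}\omega\otimes\omega+\vp(\omega)\otimes\vp(\omega)\big)$, using $(\pm1)^2=1$. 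This gives $\vp(v^\pm)=\pm\epsilon(p)p^{k-1}v^\pm$ as claimed.

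For part (b), the key idea is that $v^+$ and $v^-$ differ only in the $\omega\otimes\omega$-component (up to the common normalising scalar), so $v^+-v^-$ is a multiple of $\omega\otimes\omega$, which spans $\Fil^{2k-2}\Dcris(V_2)$ by Proposition~\ref{des} and \eqref{fil}. The strategy is then a Tate-twist/orthogonality argument: I would first translate the statement about $v_{r+1}^\pm\in\Dcris(V_2(r+1))$ into a statement at twist $0$, using that the pairing $[\,,\,]$ is compatible with Tate twists, so that $[v_{r+1}^+,x]-[v_{r+1}^-,x]$ becomes (a scalar multiple of) $[(v^+-v^-),x_{-r-1}]$ evaluated in $\Dcris(V_2)\times\Dcris(V_2^*)\to E$, where $x_{-r-1}\in\Dcris(V_2^*)$ is the untwist of $x\in\Fil^0\Dcris(V_2^*(-r))$. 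Since $x\in\Fil^0\Dcris(V_2^*(-r))$, the element $x_{-r-1}$ lands in $\Fil^{-r-1}\Dcris(V_2^*)$, and for $0\le r\le 2k-3$ this is a filtration step in which the dual pairing is forced to kill $\omega\otimes\omega$: concretely, under the perfect pairing $\Dcris(V_2)\times\Dcris(V_2^*)\to E$ (or rather $\to\Dcris(E)$ and then $\to E$), the annihilator of $\Fil^{j}\Dcris(V_2)$ is $\Fil^{-j}\Dcris(V_2^*(1))$ shifted appropriately, and one checks that $\omega\otimes\omega\in\Fil^{2k-2}\Dcris(V_2)$ pairs to zero against everything in $\Fil^{i}\Dcris(V_2^*(1))$ for $i\ge 2-2k$, which is exactly the range covered by $0\le r\le 2k-3$.

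The main obstacle — and the step I would be most careful about — is bookkeeping the filtration indices and Tate-twist shifts correctly: one must match the Hodge-Tate weights of $V_2$ (namely $0$ and $2-2k$, by Corollary~\ref{HT}) against those of $V_2^*(1)$ and $V_2^*(-r)$, and verify that $\Fil^0\Dcris(V_2^*(-r))$ really does pair trivially with the line $E(\omega\otimes\omega)$ for the full range $0\le r\le 2k-3$, with the endpoints $r=0$ and $r=2k-3$ being the tight cases. Once the orthogonality $[\omega\otimes\omega, \Fil^0\Dcris(V_2^*(-r))_{\text{untwisted}}]=0$ is established, part (b) follows immediately since $v^+-v^-=\tfrac{2\epsilon(p)p^{k-1}}{[\vp(\omega)\otimes\vp(\omega),\bar\omega]}\,\omega\otimes\omega$, and the pairing is $E$-bilinear. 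I do not expect any difficulty beyond this linear-algebra verification, as no Galois cohomology or $p$-adic analysis enters at this stage.
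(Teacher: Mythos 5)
Your proposal is correct and follows essentially the same route as the paper: (a) by the matrix of $\vp$ from Corollary~\ref{eval}, and (b) by observing that $v^+-v^-$ is a multiple of $\omega\otimes\omega$, which pairs to zero with $\Fil^0\Dcris(V_2^*(-r))$ for $0\le r\le 2k-3$ by filtration compatibility of the de Rham pairing (the paper phrases this by noting $(\omega\otimes\omega)_{r+1}\in\Fil^0\Dcris(V_2(r+1))$ and that $\Fil^0\Dcris(V_2^*(-r))$ is spanned by $\bar{\omega}_{-r-1}$). Only a minor bookkeeping slip: the untwist of $x$ lies in $\Fil^{-r}\Dcris(V_2^*)$, equivalently $\Fil^{-r-1}\Dcris(V_2^*(1))$, which does not affect the argument.
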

\begin{proof}
(a) is easy to check using the matrix given in the proof of Corollary~\ref{eval} (or by direct calculations).

By Corollary~\ref{HT}, the Hodge-Tate weights of $V_2^*$ are $0$ and $2k-2$. Hence, $\Fil^0\Dcris(V_2^*(-r))$ is one-dimensional with basis $\bar{\omega}_{-r-1}$ for $0\le r \le 2k-3$. Since $(\omega\otimes\omega)_{r+1}\in\Fil^0\Dcris(V_2(r+1))$, we have $[(\omega\otimes\omega)_{r+1},\bar{\omega}_{-r-1}]=0$. Hence, 
$$[v_{r+1}^+,\bar{\omega}_{-r-1}]=[v_{r+1}^-,\bar{\omega}_{-r-1}]=1,$$  
which implies (b).
\end{proof}

Note that $V_2^*\cong \tilde{V}_{\bar{\phi}^2}(2k-2)$. This enables us to make the following definition of $p$-adic $L$-functions associated to $\phi^2$.
\begin{definition}
On taking $\eta=\bar{\phi}^2$ in \S\ref{EU}, we define
$$
L_{\pm\epsilon(p)p^{k-1}}(\phi^2)=\LL_{v^\pm}\left(\Tw_{2k-2}\left(z(\bar{\phi}^2)\right)\right)\in\HH_{k-1,E}(G_\infty).
$$
\end{definition}

\begin{lemma}\label{twistinterpo}
Let $\theta$ be a character of $G_n$ which does not factor through $G_{n-1}$ with $n\ge1$ and write $\delta=\theta(-1)$, then
\[
\chi^{2k-3}\theta\Big(L_{\alpha}(\phi^2)\Big)=\frac{(2k-3)!p^{(2k-2)n}}{\tau(\theta^{-1})\alpha^{n}}\times\frac{L(\phi^2\theta^{-1},2k-2)}{\Omega_\delta}
\]
where $\alpha=\pm\epsilon(p)p^{k-1}$.
\end{lemma}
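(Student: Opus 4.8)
The plan is to combine the formula for $L_{\pm\epsilon(p)p^{k-1}}(\phi^2) = \LL_{v^\pm}(\Tw_{2k-2}(z(\bar\phi^2)))$ coming from Proposition~\ref{char} with the interpolation property of the elliptic-unit class $z(\bar\phi^2)$ supplied by Proposition~\ref{interpo}. Concretely, I would first unwind the definition: applying $\chi^{2k-3}\theta$ to $L_\alpha(\phi^2)$ and invoking the second formula in Proposition~\ref{char} (with $r = 2k-3$, $\mathbf{z} = \Tw_{2k-2}(z(\bar\phi^2))$ viewed in $\HIw(V_2^*)$, noting $V_2 = \tilde V_{\phi^2}$ and $V_2^* \cong \tilde V_{\bar\phi^2}(2k-2)$), one gets
\[
\chi^{2k-3}\theta\Big(L_\alpha(\phi^2)\Big) = \frac{(2k-3)!}{\tau(\theta^{-1})}\sum_{\sigma\in G_n}\theta^{-1}(\sigma)\Big[\vp^{-n}(v^\pm_{2k-2}),\exp^*\big(z_{-(2k-3),n}^\sigma\big)\Big]_n.
\]
The key computational inputs are then: (i) $\vp$ acts on $v^\pm$ by $\pm\epsilon(p)p^{k-1}$, so $\vp^{-n}(v^\pm_{2k-2})$ contributes the factor $\alpha^{-n}$ (after accounting for the Tate-twist normalisation of $\vp$ on $\Dcris(V_2(2k-2))$, which is where the power $p^{(2k-2)n}$ enters); and (ii) by Lemma~\ref{properties}(b), pairing $v^\pm_{2k-2}$ against anything in $\Fil^0\Dcris(V_2^*(-(2k-3)))$ — which is precisely where $\exp^*$ lands — is independent of the sign $\pm$ and equals the pairing against the distinguished basis vector, so the sum reduces to $\sum_\sigma \theta^{-1}(\sigma)\,\kappa\big(\exp^*(z_{-(2k-3),n}^\sigma)\big)$ for the appropriate period-normalised functional $\kappa$.

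Next I would identify this twisted sum with an $L$-value via Proposition~\ref{interpo}. After the identification $V_2^*(-(2k-3)) = \tilde V_{\bar\phi^2}(1)$ — i.e. the Tate twist brings us exactly into the situation $\tilde V_\eta(1)$ with $\eta = \bar\phi^2$ of type $(-r,0)$, $r = 2k-2$ — Proposition~\ref{interpo} gives $\kappa_\theta \circ \exp^*(z_{1,n}(\bar\phi^2)) = L_{\{p\}}(\phi^2\theta, 2k-2)\cdot(\gamma')^{\delta}$ with $\delta = \theta(-1)$. One then checks that $L_{\{p\}}$ versus $L$ differ only at the inert prime $p$, and for $\theta$ nontrivial of conductor $p^n$ the Euler factor at $p$ is trivial, so $L_{\{p\}} = L$. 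The period $\Omega_\delta$ appears because $\bar\omega$ was chosen in $\Fil^{-1}\Dcris(V_2^*(1))$ with $\per(\bar\omega) = \Omega_+(\gamma')^+ + \Omega_-(\gamma')^-$; projecting onto the $\delta$-eigenspace (which is what the character sum $\sum_\sigma \theta^{-1}(\sigma)\sigma$ does, since $\theta$ being even/odd picks out $(\gamma')^{\pm}$) divides by $\Omega_\delta$.

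I expect the main obstacle to be bookkeeping rather than conceptual: carefully tracking the two independent Tate twists ($\Tw_{2k-2}$ applied to $z(\bar\phi^2)$ to land in $\HIw(V_2^*)$, and the $\Tw_{-(2k-3)}$ implicit in Proposition~\ref{char}'s pairing $[,]_n$ on $H^1(\Qpn, V_2^*(-r))$), making sure the net effect is that $z_{-(2k-3),n}$ in Proposition~\ref{char} is literally the class $z_{1,n}(\bar\phi^2)$ of Proposition~\ref{interpo}, and that the $\vp$-eigenvalue and $p$-power factors combine to give exactly $p^{(2k-2)n}/\alpha^n$. A secondary point to verify is that the relevant Hodge--Tate weight/filtration hypotheses for the Perrin--Riou formula hold at $r = 2k-3$ — this is where $0 \le r \le 2k-3$ in Lemma~\ref{properties}(b) is being used, so the edge case $r = 2k-3$ is exactly the one we need and lies within range. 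Once these normalisations are pinned down, the displayed formula follows by comparing the two expressions term by term.
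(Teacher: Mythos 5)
Your proposal is correct and follows the same route as the paper's proof: unwind $\chi^{2k-3}\theta\bigl(\LL_{v^\pm}(\Tw_{2k-2}z(\bar\phi^2))\bigr)$ via the twisted Perrin--Riou formula of Proposition~\ref{char} with $r=2k-3$, pull the $\vp^{-n}$-factor through using Lemma~\ref{properties}(a) (the $p^{(2k-2)n}$ coming from $\vp(t)=pt$ in the Tate twist), and then invoke Proposition~\ref{interpo} together with triviality of the Euler factor at $p$ for ramified $\theta$. Your extra remark that Lemma~\ref{properties}(b) normalises the pairing against $\Fil^0\Dcris(V_2^*(3-2k))$ independently of the sign is a correct observation that the paper leaves implicit, and the Tate-twist bookkeeping you flag (namely that $z_{-(2k-3),n}$ is literally $z_{1,n}(\bar\phi^2)$) works out exactly as you anticipate.
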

\begin{proof}
We have
\begin{eqnarray*}
&&\chi^{2k-3}\theta\Big(L_{\pm\epsilon(p)p^{k-1}}(\phi^2)\Big)\\
&=&\chi^{2k-3}\theta\Big(\LL_{v^\pm}\left(\Tw_{2k-2}\left(z(\bar{\phi}^2)\right)\right)\Big)\\
&=&\frac{(2k-3)!}{\tau(\theta^{-1})}\sum_{\sigma\in G_n}\theta^{-1}(\sigma)\left[\vp^{-n}(v^\pm_{2k-2}),\exp^*(z_{1,n}(\bar{\phi}^2)^\sigma)\right]_n\\
&=&\frac{(2k-3)!}{\tau(\theta^{-1})}\left[\left(\pm\epsilon(p)p^{k-1}\times p^{-2k+2}\right)^{-n}v^\pm_{2k-2},\sum_{\sigma\in G_n}\theta^{-1}(\sigma)\exp^*(z_{1,n}(\bar{\phi}^2)^\sigma)\right]_n\\
&=&\frac{(2k-3)!p^{(2k-2)n}}{\tau(\theta^{-1})\left(\pm\epsilon(p)p^{k-1}\right)^{n}}\times\frac{L(\phi^2\theta^{-1},2k-2)}{\Omega_\delta}
\end{eqnarray*}
where the second equality follows from Proposition~\ref{char}, the third follows from Lemma~\ref{properties}(a) and the last equality is a consequence of Proposition~\ref{interpo} and the fact that $p$ divides the conductor of $\theta$.
\end{proof}

\begin{lemma}\label{trivialinterpo}We have
\[
\chi^{2k-3}\Big(L_{\pm\epsilon(p)p^{k-1}}(\phi^2)\Big)=\Big(1-p^{-1}+(1-\epsilon(p)^{-2}p^{2k-3})(\pm\epsilon(p)p^{1-k})\Big)\times\frac{L(\phi^2,2k-2)}{\Omega_+}.
\]
\end{lemma}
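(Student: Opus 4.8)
The plan is to evaluate $L_{\pm\epsilon(p)p^{k-1}}(\phi^2)=\LL_{v^\pm}\big(\Tw_{2k-2}(z(\bar\phi^2))\big)$ at $\chi^{2k-3}$ by imitating the proof of Lemma~\ref{twistinterpo}, but using the \emph{first} formula of Proposition~\ref{char} (the case $n=0$ with $\theta$ trivial) in place of the second. First I would apply Proposition~\ref{char} with $r=2k-3$ and $\mathbf z=\Tw_{2k-2}(z(\bar\phi^2))$; exactly as in the proof of Lemma~\ref{twistinterpo}, under the identification $V_2^*\cong\tilde V_{\bar\phi^2}(2k-2)$ one has $\Tw_{-(2k-3)}\mathbf z=\Tw_1(z(\bar\phi^2))$ and $V_2^*(-(2k-3))\cong\tilde V_{\bar\phi^2}(1)$, so that $z_{-(2k-3),0}=z_{1,0}(\bar\phi^2)$. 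This reduces the problem, up to the normalising prefactor coming from Proposition~\ref{char}, to computing the single $\Dcris$-pairing
\[
\left[\left(1-\tfrac{\vp^{-1}}{p}\right)(1-\vp)^{-1}\big(v^\pm_{2k-2}\big),\ \exp^*\big(z_{1,0}(\bar\phi^2)\big)\right]_0 .
\]

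Next I would make the operator $\big(1-\tfrac{\vp^{-1}}{p}\big)(1-\vp)^{-1}$ explicit on $v^\pm_{2k-2}$. By Lemma~\ref{properties}(a) we have $\vp(v^\pm)=\pm\epsilon(p)p^{k-1}v^\pm$, and since twisting by $t^{-(2k-2)}e_{2k-2}$ multiplies the Frobenius eigenvalue by $p^{-(2k-2)}$, the element $v^\pm_{2k-2}$ is a $\vp$-eigenvector with eigenvalue $\beta:=\pm\epsilon(p)p^{1-k}$; hence the operator acts on it as the scalar $(1-\beta^{-1}/p)(1-\beta)^{-1}$. By Lemma~\ref{properties}(b) the residual pairing $\big[v^\pm_{2k-2},\exp^*(z_{1,0}(\bar\phi^2))\big]_0$ is independent of the sign, and by Proposition~\ref{interpo} (with $\theta$ trivial and $n=0$) together with the defining relation $\per(\bar\omega)=\Omega_+(\gamma')^++\Omega_-(\gamma')^-$ it equals $L_{\{p\}}(\phi^2,2k-2)/\Omega_+$. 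The one genuinely new feature compared with Lemma~\ref{twistinterpo} is that $\theta$ is now unramified at $p$, so the Euler factor at $p$ no longer drops out: since $p$ is inert in $K$ and $a_p=0$, comparing the Euler factor of $L(f,s)=L(\phi,s)$ at $p$ forces $\phi(\mathfrak p)=-\epsilon(p)p^{k-1}$ for the prime $\mathfrak p=(p)$ of $K$, whence $\phi^2(\mathfrak p)N(\mathfrak p)^{-(2k-2)}=\epsilon(p)^2p^{2-2k}=\beta^2$ and therefore $L_{\{p\}}(\phi^2,2k-2)=(1-\beta^2)\,L(\phi^2,2k-2)$.

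Finally I would collect the scalars. Combining the Frobenius factor with this Euler factor gives
\[
\Big(1-\tfrac{\beta^{-1}}{p}\Big)(1-\beta)^{-1}\big(1-\beta^2\big)=\Big(1-\tfrac{\beta^{-1}}{p}\Big)(1+\beta)=\big(1-p^{-1}\big)+\beta\Big(1-\tfrac{\beta^{-2}}{p}\Big),
\]
and since $\beta^{-2}/p=\epsilon(p)^{-2}p^{2k-3}$ and $\beta=\pm\epsilon(p)p^{1-k}$ the right-hand side is precisely $1-p^{-1}+(1-\epsilon(p)^{-2}p^{2k-3})(\pm\epsilon(p)p^{1-k})$, which is the asserted factor. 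The bookkeeping of Tate twists and period normalisations in $\Dcris$ is routine and mirrors what was already done in Lemmas~\ref{properties} and~\ref{twistinterpo}; the point to watch is identifying the correct $p$-Euler factor $1-\phi^2(\mathfrak p)N(\mathfrak p)^{-(2k-2)}$ and verifying that it telescopes with Perrin-Riou's interpolation factor $\big(1-\tfrac{\vp^{-1}}{p}\big)(1-\vp)^{-1}$ into the stated closed form — that algebraic simplification is the crux.
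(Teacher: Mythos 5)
Your proposal is correct and follows essentially the same route as the paper's proof: apply the untwisted (trivial character) formula of Proposition~\ref{char} with $r=2k-3$, use that $v^\pm_{2k-2}$ is a $\vp$-eigenvector with eigenvalue $\pm\epsilon(p)p^{1-k}$ so the Perrin-Riou factor $\bigl(1-\tfrac{\vp^{-1}}{p}\bigr)(1-\vp)^{-1}$ becomes a scalar, invoke Proposition~\ref{interpo} to identify the pairing with $L_{\{p\}}(\phi^2,2k-2)/\Omega_+$, and cancel the Euler factor $1-\epsilon(p)^2p^{2-2k}$ at $p$. Your algebraic simplification of the interpolation factor matches the paper's computation, merely performed in a slightly different order, so no further comment is needed.
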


\begin{proof}
Since $\vp^2=\epsilon(p)^2p^{2-2k}$ on $\Dcris(V_2(2k-2))$, we have
\begin{eqnarray*}
&&\left(1-\frac{\vp^{-1}}{p}\right)(1-\vp)^{-1}\\
&=&(1-\epsilon(p)^{-2}p^{2k-3}\vp)\frac{1+\vp}{1-\epsilon(p)^2p^{2-2k}}\\
&=&\frac{1-p^{-1}+(1-\epsilon(p)^{-2}p^{2k-3})\vp}{1-\epsilon(p)^2p^{2-2k}}.
\end{eqnarray*}
Therefore, similarly to the proof of Lemma~\ref{twistinterpo}, we have
\begin{eqnarray*}
&&\chi^{2k-3}\Big(L_{\pm\epsilon(p)p^{k-1}}(\phi^2)\Big)\\
&=&\chi^{2k-3}\Big(\LL_{v^\pm}\left(\Tw_{2k-2}\left(z(\bar{\phi}^2)\right)\right)\Big)\\
&=&(2k-3)!\left[\frac{1-p^{-1}+(1-\epsilon(p)^{-2}p^{2k-3})\vp}{1-\epsilon(p)^2p^{2-2k}}\left(v^\pm_{2k-2}\right),\exp^*(z_{1,0}(\bar{\phi}^2))\right]_0\\
&=&(2k-3)!\left[\frac{1-p^{-1}+(1-\epsilon(p)^{-2}p^{2k-3})(\pm\epsilon(p)p^{1-k})}{1-\epsilon(p)^2p^{2-2k}}\cdot v^\pm_{2k-2},\exp^*(z_{1,0}(\bar{\phi}^2))\right]_0\\
&=&\frac{1-p^{-1}+(1-\epsilon(p)^{-2}p^{2k-3})(\pm\epsilon(p)p^{1-k})}{1-\epsilon(p)^2p^{2-2k}}\times\frac{L_{\{p\}}(\phi^2,2k-2)}{\Omega_+}\\
&=&\Big(1-p^{-1}+(1-\epsilon(p)^{-2}p^{2k-3})(\pm\epsilon(p)p^{1-k})\Big)\times\frac{L(\phi^2,2k-2)}{\Omega_+}.
\end{eqnarray*}
\end{proof}

\begin{remark}
Consider the $p$-adic $L$-function $L_{+\epsilon(p)p^{k-1}}(\phi^2)$. The first factor on the right-hand side of the equation in the statement of Lemma~\ref{trivialinterpo} vanishes if and only if $k=2$ and $\epsilon(p)=1$ (e.g. when $f$ corresponds to an elliptic curve over $\QQ$). This recovers the trivial zero result in \cite{perrinriou98}.
\end{remark}


\subsection{$p$-adic $L$-functions of the symmetric square}

Let us first recall the following result of Kubota and Leopoldt.

\begin{theorem}\label{KL}
If $\eta$ is a non-trivial Dirichlet character of conductor prime to $p$, there exists a bounded $p$-adic measure $L_p(\eta)\in\HH_{0,E}(G_\infty)$ where $E$ is some finite extension of $\Qp$ which contains the image of $\eta$ such that 
\begin{eqnarray*}
\chi^r\theta(L_p(\eta))&=&\frac{(r+1)!p^{n(r+1)}}{(2\pi i)^{r+1}\tau(\theta^{-1})}\times L(\eta\theta^{-1},r+1);\\
\chi^r(L_p(\eta))&=&\frac{(r+1)!}{(2\pi i)^{r+1}} L(\eta,r+1).
\end{eqnarray*}
for any integer $r\ge0$ and Dirichlet character $\theta$ of conductor $p^n$ such that $\chi^{r+1}\theta(-1)=\eta(-1)$.
\end{theorem}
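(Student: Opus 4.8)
The plan is to construct $L_p(\eta)$ as a Mazur-type $p$-adic measure built from Bernoulli distributions, to deduce boundedness from the Kummer congruences for generalised Bernoulli numbers, and then to extract the interpolation formula from the classical identity $L(\eta\theta^{-1},-r)=-B_{r+1,\eta\theta^{-1}}/(r+1)$, converting to the normalisation at positive integers by the functional equation of the Dirichlet $L$-function. This is the $\mathbb{G}_m$-analogue of the elliptic-unit construction of \S\ref{EU}, so a second, essentially equivalent route via cyclotomic units is also available.

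First I would fix the conductor $m$ of $\eta$, so that $(m,p)=1$, and realise everything on $\Zp^\times\cong G_\infty$, with $\eta$ entering through an averaging over $(\ZZ/m)^\times$. For a well-chosen auxiliary $c$ coprime to $mp$, the regularised Bernoulli distribution à la Mazur is a genuine $\Zp$-valued measure on $(\ZZ/m)^\times\times\Zp^\times$, and its regularising factor becomes invertible once $\eta$ is paired in (since $\eta\ne1$); pairing with $\eta$ on the first factor and dividing out the factor produces a well-defined element $L_p(\eta)\in\HH_{0,E}(G_\infty)=\Lambda_E(G_\infty)$, independent of $c$. The substantive point here is that the output really lies in the Iwasawa algebra rather than in a larger distribution ring; this is precisely the content of the Kummer-type congruences (equivalently von Staudt--Clausen-type $p$-integrality) for the numbers $B_{n,\eta}$, which one checks by comparing coefficients of the generating series $\sum_a\eta(a)\,t e^{at}/(e^{mt}-1)$ modulo powers of $p$.

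It then remains to compute moments and reconcile normalisations. By construction $\int_{\Zp^\times}x^r\theta(x)\,dL_p(\eta)$ equals $L(\eta\theta^{-1},-r)$ up to the Euler factor at $p$, which is trivial precisely when $\theta$ is ramified at $p$; this value is non-degenerate exactly when $\eta\theta^{-1}(-1)=(-1)^{r+1}$, i.e. under the stated parity condition $\chi^{r+1}\theta(-1)=\eta(-1)$, since $\chi(-1)=-1$. Plugging $L(\eta\theta^{-1},-r)=-B_{r+1,\eta\theta^{-1}}/(r+1)$ into the functional equation of the primitive character $\eta\theta^{-1}$ of conductor $mp^n$ expresses $B_{r+1,\eta\theta^{-1}}$ through the value at $r+1$, producing a factorial $(r+1)!$, a power of $2\pi i$, the conductor $(mp^n)^{r+1}$, and a Gauss sum; the multiplicativity $\tau(\eta\theta^{-1})=\eta(p^n)\,\theta^{-1}(m)\,\tau(\eta)\,\tau(\theta^{-1})$ for coprime moduli then isolates $\tau(\theta^{-1})$, and a careful accounting of the remaining $\eta$- and $m$-dependent constants and signs --- the precise form of the functional equation being where one has to be careful --- yields the asserted identity, the trivial-$\theta$ case being $n=0$. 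I expect the boundedness step to be the main obstacle: once $p$-integrality is in hand the rest is a finite manipulation of Bernoulli numbers, $\Gamma$-factors and Gauss sums, whereas that is where the genuine arithmetic input sits. If one prefers to bypass it --- in the spirit of Proposition~\ref{interpo} --- one can instead identify $L_p(\eta)$ with the image of the norm-compatible cyclotomic units in $\HIw(\Zp(1))$ over $\QQ(\mu_{mp^\infty})/\QQ$ under the Coleman map, whereupon boundedness together with the interpolation property falls out of Coleman's explicit reciprocity law.
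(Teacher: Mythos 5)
The paper does not actually prove this statement: Theorem~\ref{KL} is simply a recollection of the classical Kubota--Leopoldt theorem, quoted in the normalisation convenient for \S\ref{padicl}. So what you are supplying is a proof of the classical result itself. Your route (Mazur's $c$-regularised Bernoulli measure on $(\ZZ/m)^\times\times\Zp^\times$, boundedness via Kummer/von Staudt--Clausen integrality, interpolation at negative integers, then the functional equation to pass to $s=r+1$) is the standard one and is sound in outline; your alternative via cyclotomic units and the Coleman map is in fact the variant that runs parallel to how the paper treats the $\phi^2$-factor (Propositions~\ref{interpo} and \ref{char}), and it produces the Gauss-sum-and-$p$-power shape of the formula most directly. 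Two small caveats on the construction itself: the parity condition is correctly identified, but ``the regularising factor becomes invertible since $\eta\ne1$'' needs care when $\eta$ has $p$-power order (then $1-\eta(c)$ is a non-unit for every $c$); integrality for nontrivial tame part is still true (Iwasawa), but not for that one-line reason.

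The genuine gap is precisely the step you defer as ``careful accounting of the remaining $\eta$- and $m$-dependent constants''. Applying the functional equation to the primitive character $\eta\theta^{-1}$ of conductor $mp^n$ produces $\tau(\eta\theta^{-1})$ and $(mp^n)^{r+1}$, and after the factorisation $\tau(\eta\theta^{-1})=\eta(p^n)\theta^{-1}(m)\tau(\eta)\tau(\theta^{-1})$ the pieces behave very differently: $\tau(\eta)$ is an honest constant, and $\theta^{-1}(m)m^{-r}$ is $\chi^r\theta(\sigma_m)^{-1}$, hence absorbable by translating the measure by the group element $\sigma_m$; but the factor $\eta(p^n)=\eta(p)^n$ depends on $\theta$ only through its conductor and cannot be removed by any fixed renormalisation of an element of $\HH_{0,E}(G_\infty)$ (it is not of the form $\chi^r\theta(\lambda)$ for a fixed $\lambda$ unless $\eta(p)=1$). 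The same factor appears in the Coleman-map route as the $n$-th power of the Frobenius eigenvalue attached to $\eta$, and likewise at $n=0$ your construction yields the value $L(\eta,r+1)$ only up to an Euler-type factor at $p$, not the bare value displayed. So what your argument proves is the interpolation formula with these extra unit factors present (as in the standard references); asserting that the bookkeeping ``yields the asserted identity'' as literally displayed is an overreach. You should carry the factors $\eta(p^n)$ and the $n=0$ Euler factor explicitly, and then remark that they are $p$-adic units, so they are harmless for the way Theorem~\ref{KL} is used later (Theorems~\ref{prelog}, \ref{interpopm} and the characteristic-ideal statements), but they do affect the precise interpolation constants as quoted.
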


Since we assume that Hypothesis~\ref{hyp1} holds, we may take $\eta=\varepsilon_K\cdot\epsilon$ in Theorem~\ref{KL}. This enables us to give the following definition.

\begin{definition}
For $\alpha=\pm\epsilon(p)p^{k-1}$ we define
$$
L_{\alpha}\left(\symV\right)=L_{\alpha}(\phi^2)\times\Tw_{-k+1}\left( L_p(\varepsilon_K\cdot\epsilon)\right).
$$
\end{definition}

For the rest of this section, unless otherwise stated, $\theta$ denotes an even character on $G_n$ which does not factor through $G_{n-1}$ with $n\ge1$.

\begin{theorem}\label{prelog}
Both $L_{\pm\epsilon(p)p^{k-1}}\left(\symV\right)$ lie inside $\HH_{k-1,E}(G_\infty)$ and admit the following interpolating properties:
\begin{eqnarray*}
\chi^{2k-3}\theta\Big(L_{\alpha}\left(\symV\right)\Big)&=&\frac{(2k-3)!(k-1)!p^{3n(k-1)}}{\tau(\theta^{-1})^2\alpha^{n}}\times\frac{L(\Sym^2f,\theta^{-1},2k-2)}{(2\pi i)^{k-1}\Omega_+};\\
\chi^{2k-3}\Big(L_{\alpha}\left(\symV\right)\Big)&=&(2k-3)!(k-1)!\left(1-\frac{1}{p}+\alpha\left(p^{-2k+2}-\frac{1}{p\epsilon(p)^{2}}\right)\right)\times\frac{L(\Sym^2f,2k-2)}{(2\pi i)^{k-1}\Omega_+}
\end{eqnarray*}
where $\alpha=\pm\epsilon(p)p^{k-1}$.
\end{theorem}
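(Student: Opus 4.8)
The plan is to combine the two factorizations at our disposal: the representation-theoretic splitting $\symV \cong V_1 \oplus V_2$ from Proposition~\ref{2reps}, which gives the $L$-function factorization $L(\Sym^2 f, \theta^{-1}, s) = L(\phi^2\theta^{-1}, s) L(\varepsilon_K\epsilon\theta^{-1}, s-k+1)$ of Corollary~\ref{Lfact} (twisted by $\theta^{-1}$), together with the definition $L_\alpha(\symV) = L_\alpha(\phi^2) \times \Tw_{-k+1}(L_p(\varepsilon_K\epsilon))$. First I would address the growth claim: $L_\alpha(\phi^2) = \LL_{v^\pm}(\Tw_{2k-2}(z(\bar\phi^2)))$ lies in $\HH_{k-1,E}(G_\infty)$ by construction of the Perrin-Riou pairing $\LL_v : \HIw(V_2^*) \to \HH_{k-1,E}(G_\infty)$ (whose target is dictated by the slope $k-1$ of $\vp$ on $\Dcris(V_2)$, via Corollary~\ref{eval} and Corollary~\ref{HT}), while $\Tw_{-k+1}(L_p(\varepsilon_K\epsilon))$ is bounded, i.e.\ lies in $\HH_{0,E}(G_\infty) = \Lambda_E(G_\infty)$, since $\Tw_n$ preserves each $\HH_{r,E}(G_\infty)$; the product therefore lies in $\HH_{k-1,E}(G_\infty)$.

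Next I would compute the two interpolation formulas by evaluating the product character $\chi^{2k-3}\theta$ on each factor separately. For the first factor, Lemma~\ref{twistinterpo} gives $\chi^{2k-3}\theta(L_\alpha(\phi^2)) = \frac{(2k-3)! p^{(2k-2)n}}{\tau(\theta^{-1})\alpha^n} \cdot \frac{L(\phi^2\theta^{-1}, 2k-2)}{\Omega_\delta}$ with $\delta = \theta(-1) = +1$ since $\theta$ is even, so $\Omega_\delta = \Omega_+$. For the second factor I would use the twist compatibility $\chi^{2k-3}\theta \circ \Tw_{-k+1} = \chi^{k-2}\theta$ (shifting the cyclotomic power by $-(k-1)$ under $\Tw_{-k+1}$), and then apply Theorem~\ref{KL} with $\eta = \varepsilon_K\epsilon$ and $r = k-2$: one checks $\chi^{k-1}\theta(-1) = \theta(-1) = 1 = \varepsilon_K\epsilon(\iota)$ holds precisely because Hypothesis~\ref{hyp1} guarantees $\varepsilon_K \ne \epsilon$ so that $\varepsilon_K\epsilon$ is even (this parity check is the one subtle point), yielding $\chi^{k-2}\theta(L_p(\varepsilon_K\epsilon)) = \frac{(k-1)! p^{n(k-1)}}{(2\pi i)^{k-1}\tau(\theta^{-1})} L(\varepsilon_K\epsilon\theta^{-1}, k-1)$. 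Multiplying the two contributions and collecting the powers of $p$ ($p^{(2k-2)n} \cdot p^{n(k-1)} = p^{3n(k-1)}$), the Gauss sums ($\tau(\theta^{-1})^{-1} \cdot \tau(\theta^{-1})^{-1} = \tau(\theta^{-1})^{-2}$), and invoking Corollary~\ref{Lfact} twisted by $\theta^{-1}$ (noting $L(\varepsilon_K\epsilon\theta^{-1}, (2k-2)-k+1) = L(\varepsilon_K\epsilon\theta^{-1}, k-1)$) gives the first displayed formula.

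For the second formula (trivial character), I would argue entirely analogously but using Lemma~\ref{trivialinterpo} in place of Lemma~\ref{twistinterpo} for the $\phi^2$-factor and the second formula of Theorem~\ref{KL} (with $r = k-2$, so $\chi^{k-2}(L_p(\varepsilon_K\epsilon)) = \frac{(k-1)!}{(2\pi i)^{k-1}} L(\varepsilon_K\epsilon, k-1)$) for the other factor. The extra Euler-type factor $1 - p^{-1} + (1-\epsilon(p)^{-2}p^{2k-3})(\pm\epsilon(p)p^{1-k})$ from Lemma~\ref{trivialinterpo} must be rewritten in the form $1 - \frac1p + \alpha(p^{-2k+2} - \frac{1}{p\epsilon(p)^2})$ with $\alpha = \pm\epsilon(p)p^{k-1}$; this is a direct algebraic identity: $\alpha \cdot p^{-2k+2} = \pm\epsilon(p)p^{1-k}$ and $\alpha \cdot \frac{1}{p\epsilon(p)^2} = \frac{\pm\epsilon(p)p^{k-1}}{p\epsilon(p)^2} = \pm\epsilon(p)^{-1}p^{k-2} = \epsilon(p)^{-2}p^{2k-3}\cdot(\pm\epsilon(p)p^{1-k})$, so the two expressions agree. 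I do not expect a genuine obstacle here; the only place demanding care is the parity bookkeeping in applying Theorem~\ref{KL}—confirming that the evenness of $\theta$ together with Hypothesis~\ref{hyp1} makes the hypothesis $\chi^{r+1}\theta(-1) = \eta(-1)$ of that theorem hold—and the consistent propagation of the period $\Omega_+$ and the factor $(2\pi i)^{k-1}$ coming from the Kubota--Leopoldt normalization.
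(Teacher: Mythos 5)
Your overall route is exactly the paper's: evaluate $\chi^{2k-3}\theta$ factor by factor, using Lemma~\ref{twistinterpo} (resp.\ Lemma~\ref{trivialinterpo}) for $L_{\alpha}(\phi^2)$ and Theorem~\ref{KL} with $\eta=\varepsilon_K\epsilon$, $r=k-2$ for the twisted Kubota--Leopoldt factor, then recombine via Corollary~\ref{Lfact}; the growth statement is handled the same way in both. However, the one step you yourself flagged as subtle --- verifying the parity hypothesis $\chi^{r+1}\theta(-1)=\eta(-1)$ of Theorem~\ref{KL} --- is justified incorrectly. You claim $\chi^{k-1}\theta(-1)=\theta(-1)=1$ and that Hypothesis~\ref{hyp1} forces $\varepsilon_K\epsilon$ to be even. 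Neither is true in general: $\chi^{k-1}(-1)=(-1)^{k-1}$, which is $-1$ whenever $k$ is even (e.g.\ $k=2$, the elliptic curve case), and $\varepsilon_K\ne\epsilon$ says nothing about the parity of $\varepsilon_K\epsilon$; since $\epsilon(-1)=(-1)^k$ one has $\varepsilon_K\epsilon(-1)=(-1)^{k-1}$, so $\varepsilon_K\epsilon$ is odd precisely when $k$ is even. Your two errors cancel, so the required equality $\chi^{k-1}\theta(-1)=\varepsilon_K\epsilon(-1)$ does hold, but the correct justification is the paper's: $\det\rho_f=\epsilon\chi^{k-1}$ is odd, $\varepsilon_K(-1)=-1$ and $\theta(-1)=1$, whence both sides equal $(-1)^{k-1}$.

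Relatedly, you misattribute the role of Hypothesis~\ref{hyp1} at this point: it is needed so that $\varepsilon_K\epsilon$ is a \emph{non-trivial} Dirichlet character of conductor prime to $p$, which is what makes Theorem~\ref{KL} applicable at all; it has nothing to do with parity. With the parity argument repaired, the remainder of your computation --- the collection of powers of $p$ and Gauss sums, the identification $\Omega_\delta=\Omega_+$ for even $\theta$, the propagation of $(2\pi i)^{k-1}$, and the algebraic rewriting of the Euler-type factor at the trivial character --- is correct and agrees with the paper's proof.
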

\begin{proof}
By definition, $L_{\alpha}(\phi^2)\in\HH_{k-1,E}(G_\infty)$ and $L_p(\varepsilon_K\cdot\epsilon)\in\HH_{0,E}(G_\infty)$ which implies the first part of the theorem.

Since $\det(V_f)=\epsilon\chi^{k-1}$ and $\rho_f$ is odd, we have $\epsilon\chi^{k-1}(-1)=-1$. But $\varepsilon_K(-1)=-1$ and $\theta(-1)=1$, so $\chi^{k-1}\theta(-1)=\varepsilon_K\epsilon(-1)$ and we can apply Theorem~\ref{KL} and Lemma~\ref{twistinterpo} as follows:
\begin{eqnarray*}
&&\chi^{2k-3}\theta\Big(L_{\alpha}\left(\symV\right)\Big)\\
&=&\chi^{2k-3}\theta\Big(L_{\alpha}(\phi^2)\Big)\times\chi^{k-2}\theta\Big( L_p(\varepsilon_K\cdot\epsilon)\Big)\\
&=&\frac{(2k-3)!p^{(2k-2)n}}{\tau(\theta^{-1})\alpha^{n}}\times\frac{L(\phi^2\theta,2k-2)}{\Omega_+}\times\frac{(k-1)!p^{n(k-1)}}{(2\pi i)^{k-1}\tau(\theta^{-1})}\times L(\varepsilon_k\cdot\epsilon\cdot\theta^{-1},k-1)\\
&=&\frac{(2k-3)!(k-1)!p^{3n(k-1)}}{\tau(\theta^{-1})^2\alpha^{n}}\times\frac{L(\Sym^2f,\theta^{-1},2k-2)}{(2\pi i)^{k-1}\Omega_+},
\end{eqnarray*}
where the last equality follows from Corollary~\ref{Lfact}. This gives the first interpolating formula and the second one can be deduced in the same way.
\end{proof}

\begin{lemma}\label{nonzero}
Let $\eta$ be an even character on $\Delta$, then $L_{\pm\epsilon(p)p^{k-1}}^\eta\left(\symV\right)\ne0$.
\end{lemma}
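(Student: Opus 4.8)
The plan is to show that $L^\eta_{\pm\epsilon(p)p^{k-1}}(\symV)$ is nonzero by exhibiting at least one character at which it takes a nonzero value. Recall that $L_\alpha(\symV)=L_\alpha(\phi^2)\cdot\Tw_{-k+1}(L_p(\varepsilon_K\epsilon))$ lives in $\HH_{k-1,E}(G_\infty)$, and that any nonzero element of $\HH_{\infty,E}(G_\infty)$, being (on each $\Delta$-isotypic piece) a nonzero power series of bounded growth in $\gamma-1$, has only finitely many zeros among the characters $\chi^r\theta$ of $G_\infty$. Hence it suffices to show that, for each fixed even $\eta$ on $\Delta$, there is an even character $\theta$ of $G_n$ lying over $\eta$ with $\chi^{2k-3}\theta\bigl(L_\alpha(\symV)\bigr)\ne0$; indeed then the $\eta$-component cannot vanish. (Strictly, one uses that $\chi^{2k-3}\theta$ restricted to $\Delta$ picks out a single isotypic component determined by $\theta|_\Delta$, and ranges over all $\theta$ with $\theta|_\Delta=\eta$ as $\theta$ ranges over characters of $G_n$ trivial on $\Delta$-complement; I would phrase this via the factorisation $L_\alpha(\symV)=\sum_\eta e_\eta L_\alpha(\symV)$.)

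Next I would invoke the interpolation formula of Theorem~\ref{prelog}: for even $\theta$ of conductor $p^n$, $n\ge1$,
$$
\chi^{2k-3}\theta\Bigl(L_{\alpha}(\symV)\Bigr)=\frac{(2k-3)!(k-1)!p^{3n(k-1)}}{\tau(\theta^{-1})^2\alpha^{n}}\times\frac{L(\Sym^2f,\theta^{-1},2k-2)}{(2\pi i)^{k-1}\Omega_+}.
$$
The archimedean factors $(2k-3)!$, $(k-1)!$, $p^{3n(k-1)}$, $\tau(\theta^{-1})$, $\alpha^n$, $(2\pi i)^{k-1}$, $\Omega_+$ are all nonzero, so the value is nonzero precisely when $L(\Sym^2f,\theta^{-1},2k-2)\ne0$. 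By Corollary~\ref{Lfact} this $L$-value factors as $L(\phi^2\theta^{-1},2k-2)\,L(\varepsilon_K\epsilon\,\theta^{-1},k-1)$ (with the appropriate twist), so I need both Hecke $L$-values to be nonzero. The point $s=2k-2$ for $L(\phi^2\theta^{-1},s)$ is the near-central or edge point dictated by the motivic weight: $\phi^2$ has infinity type $(-2k+2,0)$, so $2k-2$ sits at the right edge of the critical strip, where the Euler product essentially converges (or one is at the edge of absolute convergence), and analogously $k-1$ is at the edge for $\varepsilon_K\epsilon$ of weight $k-1$. At such edge points the $L$-value is nonvanishing — for the Dirichlet $L$-function $L(\varepsilon_K\epsilon\,\theta^{-1},k-1)$ with $k-1\ge1$ this is classical (non-vanishing at the edge of the critical strip / at positive integers $\ge$ the edge is standard), and for the Hecke $L$-function $L(\phi^2\theta^{-1},s)$ at the edge point one has nonvanishing by the analogous argument (or by Rankin–Selberg / Jacquet–Shalika at the edge of the critical strip for $\GL_2$ automorphic $L$-functions, since $L(\phi^2\theta^{-1},s)=L(\Sym^2 f\otimes\theta^{-1},s)/L(\varepsilon_K\epsilon\,\theta^{-1},s-k+1)$ and the symmetric-square $L$-function is non-vanishing on the edge line).

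The main obstacle is exactly this last analytic input: pinning down that the relevant complex $L$-values at $s=2k-2$ (equivalently, the symmetric-square $L$-value and the Dirichlet $L$-value) do not vanish. For weight $k\ge3$ the point $2k-2\ge 2k-3+1$ lies strictly to the right of the centre of symmetry and in fact at or beyond the edge of the critical strip, so non-vanishing is immediate from the Euler product (the Dirichlet factor) and from standard non-vanishing of automorphic $L$-functions on $\Re(s)=$ edge (the $\GL_3$ factor); for $k=2$ one must check $s=2$ is still on the safe side of the critical strip for $\Sym^2 f$ of weight $2$, which it is since the critical strip is $\tfrac12<\Re(s)<\tfrac32$ after normalisation, i.e. $1<\Re(s)<2$ unnormalised, putting $s=2$ on the edge. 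I would cite the non-vanishing of $L(\Sym^2 f,\theta^{-1},s)$ at the edge of its critical strip (Shimura, or Jacquet–Shalika, or the integral representation via the Rankin–Selberg method of Shimura/Zagier) together with non-vanishing of Dirichlet $L$-functions at integers $\ge 1$, and combine with Theorem~\ref{prelog} and the finiteness-of-zeros remark above to conclude that $L^\eta_{\pm\epsilon(p)p^{k-1}}(\symV)\ne0$ for every even $\eta$ on $\Delta$.
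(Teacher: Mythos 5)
Your proposal is correct and takes essentially the same route as the paper: the paper's proof simply notes that $L(\Sym^2f,\theta^{-1},2k-2)\ne0$ because $s=2k-2$ lies outside (at worst on the edge of) the critical strip $k-1<\mathrm{Re}(s)<k$, and then concludes via the interpolation formulae of Theorem~\ref{prelog}. Your additional steps --- factoring the $L$-value through Corollary~\ref{Lfact}, evaluating at a ramified even character above $\eta$, and invoking edge-of-strip nonvanishing to cover $k=2$ --- are a more detailed write-up of that same argument rather than a different one.
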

\begin{proof}
We have $L(\symV,\eta,2k-2)\ne0$ because the critical strip of $\symV$ is $k-1<\text{Re}(s)<k$. Therefore, we are done by the interpolating properties given by Theorem~\ref{prelog}.
\end{proof}
\subsection{Pollack's plus and minus splittings}

As in \cite{pollack03}, we define
\begin{eqnarray*}
\log^+(\gamma)&=&\prod_{r=0}^{2k-3}\prod_{n=1}^\infty\frac{\Phi_{2n}(\chi(\gamma)^{-r}\gamma)}{p},\\
\log^-(\gamma)&=&\prod_{r=0}^{2k-3}\prod_{n=1}^\infty\frac{\Phi_{2n-1}(\chi(\gamma)^{-r}\gamma)}{p},
\end{eqnarray*}
where $\Phi_m$ denotes the $p^m$th cyclotomic polynomial. Then, $\log^\pm(\gamma)\sim\log^{k-1}$.

\begin{lemma}\label{zeros}
For an integer $r$ such that $0\le r \le 2k-3$ and a character $\theta$ of $G_n$ which does not factor through $G_{n-1}$ with $n\ge1$, then
$$
\chi^{r}\theta\Big(L_{+\epsilon(p)p^{k-1}}(\phi^2)\Big)=(-1)^n\chi^{r}\theta\Big(L_{-\epsilon(p)p^{k-1}}(\phi^2)\Big).
$$
\end{lemma}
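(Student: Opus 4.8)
The plan is to evaluate $\chi^r\theta$ on both of the $p$-adic $L$-functions using the interpolation formula for the Perrin-Riou pairing from Proposition~\ref{char}, and then to compare the two resulting sums term by term, exploiting that $v^+$ and $v^-$ differ only through the sign of their $\vp$-eigenvalue while they pair identically against the image of the dual exponential map.

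First I would unwind the definition $L_{\pm\epsilon(p)p^{k-1}}(\phi^2)=\LL_{v^\pm}\big(\Tw_{2k-2}(z(\bar\phi^2))\big)$ and apply the second formula of Proposition~\ref{char}, which is available because $n\ge1$ and $\theta$ does not factor through $G_{n-1}$, to obtain
\[
\chi^r\theta\Big(L_{\pm\epsilon(p)p^{k-1}}(\phi^2)\Big)=\frac{r!}{\tau(\theta^{-1})}\sum_{\sigma\in G_n}\theta^{-1}(\sigma)\big[\vp^{-n}\big((v^\pm)_{r+1}\big),\exp^*(z_{-r,n}^\sigma)\big]_n,
\]
where $z_{-r,n}$ is the projection of $\Tw_{-r}\big(\Tw_{2k-2}(z(\bar\phi^2))\big)$ into $H^1(\Qpn,V_2^*(-r))$. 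The crucial observation here is that the classes $z_{-r,n}^\sigma$, and hence the elements $\exp^*(z_{-r,n}^\sigma)$, are the same for the two choices of sign: only the Dieudonn\'e-module vector $v^\pm$ changes.

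Next I would compute $\vp^{-n}\big((v^\pm)_{r+1}\big)$. By Lemma~\ref{properties}(a) we have $\vp(v^\pm)=\pm\epsilon(p)p^{k-1}v^\pm$, and since $\vp$ acts on $\Dcris(E(r+1))$ as multiplication by $p^{-(r+1)}$, it follows that $\vp\big((v^\pm)_{r+1}\big)=\pm\epsilon(p)p^{\,k-2-r}(v^\pm)_{r+1}$, whence $\vp^{-n}\big((v^\pm)_{r+1}\big)=(\pm1)^{n}\big(\epsilon(p)p^{\,k-2-r}\big)^{-n}(v^\pm)_{r+1}$, using $(\pm1)^{-n}=(\pm1)^{n}$. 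Thus the two displayed expressions differ precisely by the scalar $(-1)^n$ and by the replacement of $(v^+)_{r+1}$ by $(v^-)_{r+1}$ in the first slot of the pairing.

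Finally I would invoke Lemma~\ref{properties}(b). The pairing $[\,\cdot\,,\,\cdot\,]_n$ of Proposition~\ref{char} is built (via the trace from $\Qpn$ to $E$) from the Dieudonn\'e-module pairing $[\,\cdot\,,\,\cdot\,]$ of Lemma~\ref{properties}(b), and $\exp^*(z_{-r,n}^\sigma)$ lies in $\Qpn\otimes\Fil^0\Dcris(V_2^*(-r))$; since $0\le r\le2k-3$, part (b) gives $[(v^+)_{r+1},x]=[(v^-)_{r+1},x]$ for every $x\in\Fil^0\Dcris(V_2^*(-r))$. Combining this with the eigenvalue computation of the previous step, each $\sigma$-summand attached to the $+$ sign equals $(-1)^n$ times the $\sigma$-summand attached to the $-$ sign; summing over $\sigma$ and multiplying by $r!/\tau(\theta^{-1})$ yields $\chi^r\theta\big(L_{+\epsilon(p)p^{k-1}}(\phi^2)\big)=(-1)^n\chi^r\theta\big(L_{-\epsilon(p)p^{k-1}}(\phi^2)\big)$. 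The one point that will need care is checking that the relevant pairing factors through $\Fil^0$ of the dual Dieudonn\'e module so that Lemma~\ref{properties}(b) applies --- equivalently, that $\exp^*(z_{-r,n}^\sigma)$ is orthogonal to $(\omega\otimes\omega)_{r+1}$ --- together with the (routine) sign bookkeeping; the remainder is a direct substitution.
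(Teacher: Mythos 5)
Your proposal is correct and follows essentially the same route as the paper: the paper's proof simply repeats the calculation of Lemma~\ref{twistinterpo} (Proposition~\ref{char} plus the eigenvalue computation for $\vp^{-n}$) and invokes Lemma~\ref{properties}(b), exactly as you do, with the point about $\exp^*$ landing in $\Qpn\otimes\Fil^0\Dcris(V_2^*(-r))$ being built into the paper's definition of the dual exponential. No gaps.
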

\begin{proof}
This follows from the same calculations as in the proof of Lemma~\ref{twistinterpo} thanks to Lemma~\ref{properties}(b).
\end{proof}

\begin{corollary}
We have divisibilities
\begin{eqnarray*}
\log^+(\gamma)&|&L_{+\epsilon(p)p^{k-1}}(\phi^2)+L_{-\epsilon(p)p^{k-1}}(\phi^2);\\
\log^-(\gamma)&|&L_{+\epsilon(p)p^{k-1}}(\phi^2)-L_{-\epsilon(p)p^{k-1}}(\phi^2).
\end{eqnarray*}
Similarly,
\begin{eqnarray*}
\log^+(\gamma)&|&L_{+\epsilon(p)p^{k-1}}\left(\symV\right)+L_{-\epsilon(p)p^{k-1}}\left(\symV\right);\\
\log^-(\gamma)&|&L_{+\epsilon(p)p^{k-1}}\left(\symV\right)-L_{-\epsilon(p)p^{k-1}}\left(\symV\right).
\end{eqnarray*}
\end{corollary}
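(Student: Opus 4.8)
The plan is to prove the two divisibilities for $L_{\pm\epsilon(p)p^{k-1}}(\phi^2)$ and then read off the ones for $L_{\pm\epsilon(p)p^{k-1}}(\symV)$. For the reduction, recall that $L_{\alpha}(\symV)=L_{\alpha}(\phi^2)\cdot\Tw_{-k+1}\big(L_p(\varepsilon_K\cdot\epsilon)\big)$ for $\alpha=\pm\epsilon(p)p^{k-1}$, where the factor $\Tw_{-k+1}\big(L_p(\varepsilon_K\cdot\epsilon)\big)\in\HH_{0,E}(G_\infty)$ does not depend on the sign. Hence
\[
L_{+\epsilon(p)p^{k-1}}(\symV)\pm L_{-\epsilon(p)p^{k-1}}(\symV)=\Big(L_{+\epsilon(p)p^{k-1}}(\phi^2)\pm L_{-\epsilon(p)p^{k-1}}(\phi^2)\Big)\cdot\Tw_{-k+1}\big(L_p(\varepsilon_K\cdot\epsilon)\big),
\]
so $\log^+(\gamma)\mid L_{+\epsilon(p)p^{k-1}}(\phi^2)+L_{-\epsilon(p)p^{k-1}}(\phi^2)$ immediately gives $\log^+(\gamma)\mid L_{+\epsilon(p)p^{k-1}}(\symV)+L_{-\epsilon(p)p^{k-1}}(\symV)$, and likewise for the $\log^-$ statement.

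For the $\phi^2$ case, the idea is to locate the zeros of $\log^\pm(\gamma)$ and to check, via Lemma~\ref{zeros}, that $L_{+\epsilon(p)p^{k-1}}(\phi^2)\pm L_{-\epsilon(p)p^{k-1}}(\phi^2)$ vanishes at all of them to the right order. Since $\Phi_m$ has the primitive $p^m$-th roots of unity as its (simple) roots and $\Tw_{-r}$ sends $\gamma$ to $\chi(\gamma)^{-r}\gamma$, a locally algebraic character $\chi^r\theta$ of $G_\infty$ with $0\le r\le 2k-3$ and $\theta$ of conductor $p^n$ kills the factor $\Phi_{2m}(\chi(\gamma)^{-r}\gamma)$ of $\log^+(\gamma)$ precisely when $\theta(\gamma)$ is a primitive $p^{2m}$-th root of unity, i.e. when $n=2m+1$, and then it is a simple zero of that factor; the $r\ne s$ factors never vanish there since $\chi(\gamma)$ has infinite order. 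Thus the zeros of $\log^+(\gamma)$ are exactly the characters $\chi^r\theta$ with $0\le r\le 2k-3$ and $n$ odd ($\ge 3$), each simple, and those of $\log^-(\gamma)$ are exactly the ones with $n$ even, each simple. On the other hand Lemma~\ref{zeros} gives $\chi^r\theta\big(L_{+\epsilon(p)p^{k-1}}(\phi^2)\big)=(-1)^n\chi^r\theta\big(L_{-\epsilon(p)p^{k-1}}(\phi^2)\big)$, so $L_{+\epsilon(p)p^{k-1}}(\phi^2)+L_{-\epsilon(p)p^{k-1}}(\phi^2)$ vanishes at every such character with $n$ odd, in particular at every zero of $\log^+(\gamma)$, while $L_{+\epsilon(p)p^{k-1}}(\phi^2)-L_{-\epsilon(p)p^{k-1}}(\phi^2)$ vanishes at every zero of $\log^-(\gamma)$.

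To conclude, one passes from this vanishing to the stated divisibility inside $\HH_{k-1,E}(G_\infty)$: after a harmless finite extension of $E$ containing the relevant $p$-power roots of unity, the divisor of $\log^\pm(\gamma)$ consists of simple zeros all of which occur in the divisor of $L_{+\epsilon(p)p^{k-1}}(\phi^2)\pm L_{-\epsilon(p)p^{k-1}}(\phi^2)$, so by the $p$-adic Weierstrass theory for the rings $\HH_r$ the latter is divisible by the former, and since $\log^\pm(\gamma)\sim\log^{k-1}$ and $L_{\pm\epsilon(p)p^{k-1}}(\phi^2)\in\HH_{k-1,E}(G_\infty)$ the quotient even lies in $\Lambda_E(G_\infty)$; Galois descent then removes the auxiliary extension (cf. the analogous arguments in \cite{pollack03}). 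The $\symV$ divisibilities follow from the first paragraph. The step needing the most care is this last one: justifying, in the non-Noetherian rings $\HH_r$, that divisibility by $\log^\pm(\gamma)$ is detected by vanishing at its simple zeros, and checking that these zeros are matched correctly to the characters with $(-1)^n=-1$, respectively $(-1)^n=1$; the remaining parity bookkeeping is routine.
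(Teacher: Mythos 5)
Your proposal is correct and follows essentially the same route as the paper: the paper's proof simply invokes Lemma~\ref{zeros} (vanishing of $L_{+\epsilon(p)p^{k-1}}(\phi^2)\pm L_{-\epsilon(p)p^{k-1}}(\phi^2)$ at the simple zeros of $\log^\pm(\gamma)$, with the Pollack-style divisibility criterion left implicit) and then deduces the $\symV$ statements from the definition $L_\alpha(\symV)=L_\alpha(\phi^2)\times\Tw_{-k+1}\big(L_p(\varepsilon_K\cdot\epsilon)\big)$, exactly as you do. Your parity bookkeeping (zeros of $\log^+$ at conductor exponent $n$ odd, of $\log^-$ at $n$ even) matches the interpolation ranges in Theorem~\ref{interpopm}, so the write-up is a faithful, more detailed version of the paper's argument.
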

\begin{proof}
The first set of divisibilities follows from Lemma~\ref{zeros}. The second set is then immediate by definition.
\end{proof}
This allows us to define the following.

\begin{definition}We define the plus and minus $p$-adic $L$-functions for $\symV$ by
\begin{eqnarray*}
L_p^+(\symV)&=&\Big(L_{+\epsilon(p)p^{k-1}}\left(\symV\right)+L_{-\epsilon(p)p^{k-1}}\left(\symV\right)\Big)/2\log^+(\gamma);\\
L_p^-(\symV)&=&\Big(L_{+\epsilon(p)p^{k-1}}\left(\symV\right)-L_{-\epsilon(p)p^{k-1}}\left(\symV\right)\Big)/2\log^-(\gamma).
\end{eqnarray*}
Similarly, we define the plus and minus $p$-adic $L$-functions for $V_2$ by
\begin{eqnarray*}
L_p^+(\phi^2)&=&\Big(L_{+\epsilon(p)p^{k-1}}\left(\phi^2\right)+L_{-\epsilon(p)p^{k-1}}\left(\phi^2\right)\Big)/2\log^+(\gamma);\\
L_p^-(\phi^2)&=&\Big(L_{+\epsilon(p)p^{k-1}}\left(\phi^2\right)-L_{-\epsilon(p)p^{k-1}}\left(\phi^2\right)\Big)/2\log^-(\gamma).
\end{eqnarray*}
\end{definition}

It is immediate that we have
\begin{equation}\label{facts}
L_p^\pm\left(\symV\right)=L_p^\pm(\phi^2)\times\Tw_{-k+1}\left( L_p(\varepsilon_K\cdot\epsilon)\right).
\end{equation}

\begin{theorem}\label{interpopm}
Both $L_p^\pm(\symV)$ are elements of $\Lambda_E(G_\infty)$ and admit the following interpolating properties:
\begin{itemize}
\item[(a)] If $n$ is even, then
\begin{eqnarray*}
\chi^{2k-3}\theta\Big(L_p^+(\symV)\Big)&=&\frac{(2k-3)!(k-1)!p^{2n(k-1)}}{\log^+\left(\chi^{2k-3}\theta(\gamma)\right)\tau(\theta^{-1})^2\epsilon(p)^n}\times\frac{L(\Sym^2f,\theta^{-1},2k-2)}{(2\pi i)^{k-1}\Omega_+},\\
\chi^{2k-3}\Big(L_p^+(\symV)\Big)&=&\frac{(2k-3)!(k-1)!\left(1-p^{-1}\right)}{\log^+\left(\chi^{2k-3}(\gamma)\right)}\times\frac{L(\Sym^2f,2k-2)}{(2\pi i)^{k-1}\Omega_+};
\end{eqnarray*}
\item[(b)] If $n$ is odd, then
\begin{eqnarray*}
\chi^{2k-3}\theta\Big(L_p^-(\symV)\Big)&=&\frac{(2k-3)!(k-1)!p^{2n(k-1)}}{\log^-\left(\chi^{2k-3}\theta(\gamma)\right)\tau(\theta^{-1})^2\epsilon(p)^n}\times\frac{L(\Sym^2f,\theta^{-1},2k-2)}{(2\pi i)^{k-1}\Omega_+},\\
\chi^{2k-3}\Big(L_p^-(\symV)\Big)&=&\frac{(2k-3)!(k-1)!\left(\epsilon(p)p^{-k+1}-\epsilon(p)^{-1}p^{k-2}\right)}{\log^-\left(\chi^{2k-3}(\gamma)\right)}\times\frac{L(\Sym^2f,2k-2)}{(2\pi i)^{k-1}\Omega_+}.
\end{eqnarray*}
\end{itemize}

Moreover, $L_p^\pm(\symV)$ are uniquely determined by (a) and (b) respectively.
\end{theorem}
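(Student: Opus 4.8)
The plan is to deduce Theorem~\ref{interpopm} in three stages: first establish that $L_p^\pm(\symV)\in\Lambda_E(G_\infty)$, then compute the interpolation formulas at $\chi^{2k-3}\theta$ and $\chi^{2k-3}$, and finally prove uniqueness. For the integrality, recall that by Theorem~\ref{prelog} both $L_{\pm\epsilon(p)p^{k-1}}(\symV)$ lie in $\HH_{k-1,E}(G_\infty)$, and by the corollary preceding the definition of $L_p^\pm(\symV)$ we have $\log^\pm(\gamma)\mid L_{+\epsilon(p)p^{k-1}}(\symV)\pm L_{-\epsilon(p)p^{k-1}}(\symV)$ in $\HH_{\infty,E}(G_\infty)$; hence the quotients $L_p^\pm(\symV)$ lie in $\HH_{\infty,E}(G_\infty)$. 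Now I would invoke the growth estimate $\log^\pm(\gamma)\sim\log^{k-1}$: since $L_{\pm\epsilon(p)p^{k-1}}(\symV)$ has growth $O(\log^{k-1})$ and we are dividing by a function of the same growth rate, the quotient has growth $O(\log^0)$, i.e.\ is bounded, and a bounded element of $\HH_{\infty,E}(G_\infty)$ lies in $\HH_{0,E}(G_\infty)=\Lambda_E(G_\infty)$. (One may also argue via \eqref{facts}: $L_p^\pm(\phi^2)$ is bounded for the same reason, and $\Tw_{-k+1}(L_p(\varepsilon_K\cdot\epsilon))\in\Lambda_E(G_\infty)$ by Theorem~\ref{KL}, so the product is in $\Lambda_E(G_\infty)$.)

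For the interpolation at a character of conductor $p^n$ with $n$ even, the point is that $\log^-(\chi^{2k-3}\theta(\gamma))=0$ when $n$ is even (since the relevant factor $\Phi_{2m-1}$ vanishes), so by the divisibility $\log^-(\gamma)\mid L_{+}-L_{-}$ one gets $L_{+\epsilon(p)p^{k-1}}(\symV)=L_{-\epsilon(p)p^{k-1}}(\symV)$ after evaluation, whence
$$
\chi^{2k-3}\theta\big(L_p^+(\symV)\big)=\frac{\chi^{2k-3}\theta\big(L_{\epsilon(p)p^{k-1}}(\symV)\big)}{\log^+(\chi^{2k-3}\theta(\gamma))}.
$$
Plugging in the value of the numerator from Theorem~\ref{prelog} (with $\alpha=\epsilon(p)p^{k-1}$, so $\alpha^n=\epsilon(p)^np^{n(k-1)}$ and the $p$-power in the numerator drops from $p^{3n(k-1)}$ to $p^{2n(k-1)}$) gives formula (a). The value at $\chi^{2k-3}$ (the $n=0$, $\theta$ trivial case) is obtained the same way from the second formula in Theorem~\ref{prelog}: one evaluates $L_p^+=\tfrac{1}{2\log^+(\gamma)}(L_++L_-)$ at $\chi^{2k-3}$, and the average of the two Euler-type factors $1-p^{-1}+\alpha(p^{-2k+2}-p^{-1}\epsilon(p)^{-2})$ over $\alpha=\pm\epsilon(p)p^{k-1}$ leaves exactly $1-p^{-1}$, matching the stated formula. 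The case $n$ odd is symmetric, using that $\log^+(\chi^{2k-3}\theta(\gamma))=0$ for $n$ odd so that $L_+=-L_-$ after evaluation, and that at $\chi^{2k-3}$ the \emph{difference} of the Euler factors is $2\alpha(p^{-2k+2}-p^{-1}\epsilon(p)^{-2})$ with $\alpha=\epsilon(p)p^{k-1}$, giving $\epsilon(p)p^{-k+1}-\epsilon(p)^{-1}p^{k-2}$ after dividing by $2$.

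For uniqueness, I would argue that an element of $\Lambda_E(G_\infty)$ is determined by its values at $\chi^{2k-3}\theta$ as $\theta$ ranges over characters of $G_\infty$ of $p$-power conductor, restricted to the $\theta$ with $n$ even for the $+$ case (resp.\ $n$ odd for the $-$ case) together with the trivial twist. Concretely, working $\delta$-component by $\delta$-component under $G_\infty\cong\Delta\times\Gamma$, an element of $\Lambda_E(\Gamma)\cong E[[\gamma-1]]$ is a power series, and the characters $\chi^{2k-3}\theta$ with $\theta$ of conductor $p^n$, $n$ in a fixed congruence class, still have $\gamma$-values $\zeta\chi^{2k-3}(\gamma)$ with $\zeta$ ranging over an infinite set of roots of unity (those of order $p^{n-1}$, $n$ even, say, which is still infinite) accumulating at a point in the open unit disc; a power series vanishing at infinitely many points with an accumulation point inside the disc is identically zero by the Weierstrass preparation theorem (a nonzero element of $E[[\gamma-1]]$ has only finitely many zeros in any disc of radius $<1$). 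Hence (a) pins down $L_p^+(\symV)$ and (b) pins down $L_p^-(\symV)$.

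The main obstacle is the uniqueness argument: one must be careful that the set of characters appearing in (a) (those even, with $n$ even) is genuinely Zariski-dense in $\Spec\Lambda_E(G_\infty)$ in the relevant sense—i.e.\ that restricting to even $n$ does not lose too much—and that the accumulation-point argument is applied correctly on each $\Delta$-isotypic piece with the right twist $\chi^{2k-3}$; the boundedness from the first part is what makes this work, since for an unbounded element of $\HH_{\infty,E}(G_\infty)$ the vanishing-at-roots-of-unity argument would fail.
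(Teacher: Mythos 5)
Your proposal is correct and follows essentially the same route as the paper's (much terser) proof: membership in $\Lambda_E(G_\infty)$ from the growth comparison $\log^\pm(\gamma)\sim\log^{k-1}$ applied to Theorem~\ref{prelog}, the interpolation formulas by evaluating Theorem~\ref{prelog} at $\chi^{2k-3}\theta$ and $\chi^{2k-3}$ (your use of the vanishing of $\log^\mp$ at the relevant characters is equivalent to invoking Lemma~\ref{zeros}), and uniqueness from the fact that a bounded Iwasawa function is determined by its values at infinitely many characters. The only blemish is cosmetic and concerns the $p$-adic geometry in the uniqueness step: the evaluation points do not accumulate at an interior point of the unit disc (indeed $|\theta(\gamma)-1|_p\to 1$ as the conductor grows), but the Weierstrass preparation fact you also cite — a nonzero element of $\Lambda_E(\Gamma)$ has only finitely many zeros in the whole open unit disc — already suffices, so your conclusion stands and the boundedness established in the first part is, as you note, exactly what makes this work.
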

\begin{proof}
By the first part of Theorem~\ref{prelog}, $L_{\pm\epsilon(p)p^{k-1}}\left(\symV\right)$ are both elements of $\HH_{k-1,E}(G_\infty)$. But $\log^\pm(\gamma)\sim \log^{k-1}$, so the quotients above are in $\HH_{0,E}(G_\infty)=\Lambda_E(G_\infty)$. 

The interpolating formulae in (a) and (b) follow from those given in Theorem~\ref{prelog}.

Finally, since $L_p^\pm(\symV)\in\Lambda_E(G_\infty)$, they are uniquely determined by their values at an infinite number of characters, hence the last part of the theorem.
\end{proof}

\begin{lemma}\label{not0}
Let $\eta$ be an even character on $\Delta$, then $L_p^{\pm,\eta}\left(\symV\right)\ne0$.
\end{lemma}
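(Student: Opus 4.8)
The plan is to reduce the claim for $\symV$ to the corresponding nonvanishing statements for the two factors, using the product decomposition \eqref{facts}. Since $L_p^\pm(\symV)=L_p^\pm(\phi^2)\times\Tw_{-k+1}\left(L_p(\varepsilon_K\cdot\epsilon)\right)$ in $\Lambda_E(G_\infty)$, and $\Lambda_E(G_\infty)$ is a finite product of integral domains indexed by the characters of $\Delta$, it suffices to show that on each even $\eta$-component neither factor vanishes. The twist $\Tw_{-k+1}$ is an automorphism of $\Lambda_E(G_\infty)$ permuting components (shifting by $\chi^{-k+1}|_\Delta$), so the nonvanishing of $\Tw_{-k+1}(L_p(\varepsilon_K\cdot\epsilon))^\eta$ is equivalent to nonvanishing of $L_p(\varepsilon_K\cdot\epsilon)$ on the corresponding component; this is the classical fact that the Kubota--Leopoldt $p$-adic $L$-function of the nontrivial character $\varepsilon_K\cdot\epsilon$ (nontrivial by Hypothesis~\ref{hyp1}) is nonzero, which follows from Theorem~\ref{KL} together with nonvanishing of the complex $L$-value $L(\varepsilon_K\cdot\epsilon,1)$ (or any $L(\varepsilon_K\cdot\epsilon,r+1)$ in the relevant parity).

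First I would establish that $L_p^{\pm,\eta}(\phi^2)\ne0$. For this I invoke Theorem~\ref{prelog} (equivalently Lemma~\ref{nonzero}): the complex $L$-value $L(\symV,\eta,2k-2)$ is nonzero because $s=2k-2$ lies outside the critical strip $k-1<\mathrm{Re}(s)<k$ — indeed $\mathrm{Re}(s)=2k-2\ge k$ for $k\ge2$ — so the region of absolute convergence applies. Hence $L_{\pm\epsilon(p)p^{k-1}}^\eta(\symV)\ne0$ by Lemma~\ref{nonzero}, and therefore $L_{\pm\epsilon(p)p^{k-1}}^\eta(\phi^2)\ne0$ by the factorisation and the nonvanishing of the Kubota--Leopoldt factor just discussed. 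It then remains to rule out the possibility that the combination $L_{+\epsilon(p)p^{k-1}}^\eta(\phi^2)\pm L_{-\epsilon(p)p^{k-1}}^\eta(\phi^2)$ is identically zero even though each summand is nonzero. To see this, evaluate both $p$-adic $L$-functions at the infinitely many characters $\chi^{2k-3}\theta$ with $\theta$ of conductor $p^n$: by Lemma~\ref{twistinterpo} the value of $L_{\pm\epsilon(p)p^{k-1}}(\phi^2)$ at $\chi^{2k-3}\theta$ is $\frac{(2k-3)!p^{(2k-2)n}}{\tau(\theta^{-1})(\pm\epsilon(p)p^{k-1})^n}\cdot\frac{L(\phi^2\theta^{-1},2k-2)}{\Omega_\delta}$, which is nonzero (again $2k-2$ is outside the critical strip), and the two agree up to the sign $(-1)^n$ by Lemma~\ref{zeros}. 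Thus for $n$ even the sum is $2L_{+\epsilon(p)p^{k-1}}^\eta(\phi^2)$ evaluated there, which is nonzero, and for $n$ odd the difference is nonzero; choosing $n$ of the correct parity (restricted further so that $\theta$ picks out the component $\eta$, which is possible for infinitely many $n$) shows the relevant combination does not vanish on a component, hence is a nonzero element of that integral domain.

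Putting these together: $L_p^{\pm,\eta}(\symV)$ is, on the component indexed by $\eta$, the product of the nonzero element $L_p^{\pm}(\phi^2)^{\eta'}$ (for the appropriate twisted component $\eta'$) divided by $\log^\pm(\gamma)$ — which is a nonzero nonzerodivisor since $L_p^\pm(\phi^2)\in\Lambda_E(G_\infty)$ by the argument in Theorem~\ref{interpopm} — times the nonzero Kubota--Leopoldt factor. Since a product of nonzero elements in an integral domain is nonzero, $L_p^{\pm,\eta}(\symV)\ne0$. The main obstacle is the bookkeeping in the previous paragraph: one must be careful that the division by $\log^\pm(\gamma)$ genuinely lands in $\Lambda_E(G_\infty)$ on the $\eta$-component (guaranteed by Theorem~\ref{interpopm}) and that the characters $\chi^{2k-3}\theta$ used to detect nonvanishing actually lie in the chosen $\Delta$-component — this is handled by noting that the restriction of $\theta$ to $\Delta$ ranges over all characters as $n$ varies within a fixed parity class, so infinitely many such characters sit in the $\eta$-component, and an element of the integral domain vanishing at infinitely many such points must be zero.
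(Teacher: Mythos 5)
Your argument is correct in substance, but it reaches the conclusion by a detour: the paper's proof is literally the proof of Lemma~\ref{nonzero} run again, i.e.\ one evaluates $L_p^{\pm,\eta}(\symV)$ at a single character $\chi^{2k-3}\theta$ lying in the $\eta$-component whose conductor exponent $n$ is even (for $+$) or odd (for $-$), and Theorem~\ref{interpopm} exhibits this value as a visibly nonzero multiple of $L(\Sym^2f,\theta^{-1},2k-2)$, which is nonzero because $2k-2$ lies beyond the critical strip $k-1<\mathrm{Re}(s)<k$. Your route --- factoring via \eqref{facts}, proving nonvanishing of the $\phi^2$-factor through Lemmas~\ref{twistinterpo} and~\ref{zeros}, and separately invoking nonvanishing of the Kubota--Leopoldt factor via Theorem~\ref{KL} --- in effect re-derives the interpolation content of Theorem~\ref{interpopm} rather than citing it; it is valid, and your observation that nonvanishing of $L_{+\epsilon(p)p^{k-1}}^\eta(\symV)$ and $L_{-\epsilon(p)p^{k-1}}^\eta(\symV)$ individually does not yet rule out vanishing of their sum or difference is exactly the right point, and it is settled (in both approaches) by evaluating at characters whose conductor exponent has the correct parity. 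Two minor remarks: the phrase ``appropriate twisted component $\eta'$'' for the $\phi^2$-factor is unnecessary, since multiplication in $\Lambda_E(G_\infty)$ is componentwise and it is the $\eta$-component of $L_p^\pm(\phi^2)$ that enters, the twist $\Tw_{-k+1}$ only shifting which component of the Kubota--Leopoldt factor appears; and when $k=2$ the point $s=2k-2=k$ sits on the boundary of the critical strip, so absolute convergence alone does not give the nonvanishing and one needs the standard edge-of-strip nonvanishing --- a caveat your write-up shares with the paper's own one-line justification.
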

\begin{proof}
The same as the proof of Lemma~\ref{nonzero}.
\end{proof}

\begin{remark}
Analogues of Theorem~\ref{interpopm} and Lemma~\ref{not0} for $L_p^\pm(\phi^2)$ can be deduced in the same way.
\end{remark}

\begin{remark}
A conjectural generalisation of Pollack's plus and minus splittings of $p$-adic $L$-functions for motives has been formulated in \cite{dabrowski11}. Theorem~\ref{interpopm} gives an affirmative answer to Conjecture~2 of op. cit. for the special case when the motive corresponds to the symmetric square of a CM modular form.
\end{remark}


\section{Selmer groups}\label{selmer}
In this section, we define the plus and minus $p$-Selmer groups for $\symV$ and relate them to the $p$-adic $L$-functions $L_p^\pm\left(\symV\right)$ defined above. By the decomposition given by Proposition~\ref{2reps}, we only need to define their counterparts for $V_2=\tilde{V}_{\phi^2}$ because the Selmer group of $V_1$ is relatively well-understood. The $G_{\QQ}$-representation $V_2$ behaves in exactly the same way as $V_{f'}$ where $f'$ is some CM  modular form of weight $2k-1$, so many of the results on $V_2$ below can be proved using the arguments given in \cite{lei09}. Therefore, we only outline the proofs without giving all the details here. 

\subsection{Coleman maps and Selmer groups}

As in \cite{lei09,leiloefflerzerbes10}, we define plus and minus Selmer groups using the kernels of some Coleman maps.

\begin{proposition}
If $z\in\HIw(V_2^*)$, then
\begin{eqnarray*}
\log^+(\gamma)&|&\LL_{\vp(\omega)\otimes\vp(\omega)}(z),\\
\log^-(\gamma)&|&\LL_{\omega\otimes\omega}(z).
\end{eqnarray*}
\end{proposition}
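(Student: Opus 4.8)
The plan is to evaluate both $\LL_{\vp(\omega)\otimes\vp(\omega)}(z)$ and $\LL_{\omega\otimes\omega}(z)$ at the characters $\chi^r\theta$ (for $0\le r\le 2k-3$ and $\theta$ of conductor $p^n$, $n\ge 1$) using the interpolation formulae of Proposition~\ref{char}, and to show that the first vanishes whenever $n$ is odd while the second vanishes whenever $n$ is even. Since $\log^+(\gamma)$ (respectively $\log^-(\gamma)$) is, up to a unit, exactly the element of $\HH_{k-1,E}(G_\infty)$ whose zeros are the characters $\chi^r\theta$ with $0\le r\le 2k-3$ and $n$ even (respectively $n$ odd) — this is precisely how the products of $\Phi_{2n}$ and $\Phi_{2n-1}$ were defined, together with the observation $\log^\pm(\gamma)\sim\log^{k-1}$ and the fact that $\LL_v(z)\in\HH_{k-1,E}(G_\infty)$ lies in the right growth class — the claimed divisibilities follow from the Weierstrass/Mahler-type division property for $\HH$: an element of $\HH_{r,E}(G_\infty)$ vanishing at all zeros of $\log^\pm(\gamma)$ (with at least the right order at each, here order one suffices since these are simple) is divisible by $\log^\pm(\gamma)$ in $\HH$.

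Concretely, first I would fix the basis $\omega\otimes\omega$, $\vp(\omega)\otimes\vp(\omega)$ of $\Dcris(V_2)$ and recall from the proof of Corollary~\ref{eval} that $\vp$ swaps these (up to the scalar $\epsilon(p)^2p^{2k-2}$); hence $\vp^{-n}$ sends $\omega\otimes\omega$ to a scalar multiple of $\omega\otimes\omega$ when $n$ is even and to a scalar multiple of $\vp(\omega)\otimes\vp(\omega)$ when $n$ is odd, and symmetrically for $\vp(\omega)\otimes\vp(\omega)$. Next I would invoke Corollary~\ref{HT}, which gives that the Hodge–Tate weights of $V_2^*$ are $0$ and $2k-2$, so that $\Fil^0\Dcris(V_2^*(-r))$ is one-dimensional spanned by $\bar\omega_{-r-1}$ for $0\le r\le 2k-3$; since $\exp^*(z_{-r,n}^\sigma)$ lands in $\Qpn\otimes\Fil^0\Dcris(V_2^*(-r))$, in the pairing $[\vp^{-n}(v_{r+1}),\exp^*(z_{-r,n}^\sigma)]_n$ only the component of $\vp^{-n}(v_{r+1})$ lying in $\Fil^0\Dcris(V_2(r+1))^\perp$-complement contributes. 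As in Lemma~\ref{properties}(b), $(\omega\otimes\omega)_{r+1}\in\Fil^0\Dcris(V_2(r+1))$ pairs to zero against $\bar\omega_{-r-1}$, whereas $(\vp(\omega)\otimes\vp(\omega))_{r+1}$ pairs to a nonzero scalar. Combining: when $n$ is odd, $\vp^{-n}((\vp(\omega)\otimes\vp(\omega))_{r+1})$ is a scalar times $(\omega\otimes\omega)_{r+1}$, which pairs to zero, so $\chi^r\theta(\LL_{\vp(\omega)\otimes\vp(\omega)}(z))=0$; when $n$ is even, $\vp^{-n}((\omega\otimes\omega)_{r+1})$ is a scalar times $(\omega\otimes\omega)_{r+1}$, which again pairs to zero, so $\chi^r\theta(\LL_{\omega\otimes\omega}(z))=0$. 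One should also check the case $n=0$ (the first formula of Proposition~\ref{char}), but there the operator $(1-\vp^{-1}/p)(1-\vp)^{-1}$ mixes the two basis vectors and there is no forced vanishing — which is consistent, since $\chi^r$ with $0\le r\le 2k-3$ is not among the zeros of either $\log^\pm(\gamma)$.

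Having established the character-by-character vanishing, the final step is the division argument in $\HH$: I would note that $\LL_v(z)\in\HH_{k-1,E}(G_\infty)$ by the Perrin-Riou pairing (Corollary~\ref{eval}, Corollary~\ref{HT}), that $\log^\pm(\gamma)\sim\log^{k-1}$, and that the set of zeros of $\log^+(\gamma)$ (counted with multiplicity one) is exactly $\{\chi^r\theta:0\le r\le 2k-3,\ n\text{ even},\ n\ge 1\}$ together with their $\Delta$-twists — and dually for $\log^-(\gamma)$ — so that $\LL_{\vp(\omega)\otimes\vp(\omega)}(z)/\log^+(\gamma)$ and $\LL_{\omega\otimes\omega}(z)/\log^-(\gamma)$ are holomorphic of growth $O(1)$, hence lie in $\HH_{0,E}(G_\infty)=\Lambda_E(G_\infty)\otimes$, giving the divisibilities. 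The main obstacle I anticipate is the bookkeeping for the $\delta$-isotypical (i.e. $\Delta$-) components: $\log^\pm(\gamma)$ as written is a power series in $\gamma-1$ only, so one must be careful that the vanishing at $\chi^r\theta$ holds for \emph{all} $\theta$ (all $\Delta$-parts) of conductor $p^n$ with the relevant parity of $n$, and then apply the division component-by-component over $\Delta$; but this is exactly the setup already used in \cite{lei09,pollack03}, so the argument is routine modulo this care. I would therefore present the proof as: (i) the $\vp$-permutation computation, (ii) the $\Fil^0$ orthogonality as in Lemma~\ref{properties}(b), (iii) conclusion of vanishing at the prescribed characters, (iv) division in $\HH$ via the explicit zero set of $\log^\pm(\gamma)$.
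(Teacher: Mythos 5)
Your overall strategy is the one the paper intends (it simply cites \cite[Proposition~3.14]{lei09}, which runs exactly this argument from Proposition~\ref{char}): evaluate $\LL_v(z)$ at $\chi^r\theta$ via the Perrin-Riou interpolation, use that $\vp^{\pm1}$ permutes the basis $\omega\otimes\omega,\ \vp(\omega)\otimes\vp(\omega)$ of $\Dcris(V_2)$, and use the $\Fil^0$-orthogonality $[(\omega\otimes\omega)_{r+1},\bar\omega_{-r-1}]=0$ from Lemma~\ref{properties}(b) to get vanishing, then divide in $\HH$ using $\log^\pm(\gamma)\sim\log^{k-1}$. Your computation of where the interpolation values vanish is also correct: $\chi^r\theta\bigl(\LL_{\vp(\omega)\otimes\vp(\omega)}(z)\bigr)=0$ when $n$ is odd, and $\chi^r\theta\bigl(\LL_{\omega\otimes\omega}(z)\bigr)=0$ when $n$ is even.

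There is, however, a parity error in your identification of the zero sets of $\log^\pm(\gamma)$, and as written it makes your stated facts contradict the claimed divisibilities. You assert that $\log^+(\gamma)$ vanishes at $\chi^r\theta$ with $\theta$ of conductor $p^n$, $n$ even, and $\log^-(\gamma)$ at $n$ odd; this is backwards. If $\theta$ is a character of $G_n$ not factoring through $G_{n-1}$, then $\theta|_\Gamma$ has order $p^{n-1}$, so $\theta(\gamma)$ is a primitive $p^{n-1}$th root of unity. Hence $\Phi_{2\ell}$ vanishes at $\theta(\gamma)$ precisely when $n-1=2\ell$, i.e.\ $n=2\ell+1$ is odd, so $\log^+(\gamma)$ has zeros at conductors $p^n$ with $n$ odd, $n\ge3$, and $\log^-(\gamma)$ at $n$ even, $n\ge2$. (This is also forced by the paper's internal logic: Lemma~\ref{zeros} gives $\chi^r\theta(L_+)=(-1)^n\chi^r\theta(L_-)$, so $L_++L_-$ vanishes at $n$ odd, and Theorem~\ref{interpopm} interpolates $L_p^+=(L_++L_-)/2\log^+(\gamma)$ at $n$ even, away from the zeros of $\log^+$.) With the corrected parity, your vanishing of $\LL_{\vp(\omega)\otimes\vp(\omega)}$ at $n$ odd matches the zero set of $\log^+$, and that of $\LL_{\omega\otimes\omega}$ at $n$ even matches $\log^-$, which gives the proposition. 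As written, though, the facts you state would yield $\log^-\mid\LL_{\vp(\omega)\otimes\vp(\omega)}$ and $\log^+\mid\LL_{\omega\otimes\omega}$, the opposite of what is claimed; you should correct the zero-set identification (the source of the slip is the shift from the conductor $p^n$ of $\theta$ to the order $p^{n-1}$ of $\theta(\gamma)$).
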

\begin{proof}
As in \cite[Proposition~3.14]{lei09}, this can be proved using Proposition~\ref{char}.
\end{proof}

Therefore, as in \cite{lei09}, we may define $\Lambda_E(G_\infty)$-homomorphisms
\begin{eqnarray*}
\col^+:\HIw(V_2^*)&\rightarrow&\Lambda_E(G_\infty)\\
z&\mapsto&\frac{1}{2[\vp(\omega)\otimes\vp(\omega),\bar{\omega}]\log^+(\gamma)}\LL_{\vp(\omega)\otimes\vp(\omega)}(z);\\
\col^-:\HIw(V_2^*)&\rightarrow&\Lambda_E(G_\infty)\\
z&\mapsto&\frac{1}{2[\vp(\omega)\otimes\vp(\omega),\bar{\omega}]\log^-(\gamma)}\LL_{\omega\otimes\omega}(z).
\end{eqnarray*}
Then, it is clear by definition that $\col^\pm\left(\Tw_{2k-2}\left(z(\bar{\phi}^2)\right)\right)=L_p^\pm(\phi^2)$.

We now fix an $\calO_E$-lattice $T$ of $V(\phi)$ which is stable under $G_\QQ$, it then gives rise to natural $\calO_E$-lattices $T_f=\IndQK(T)$ and $\Sym^2T_f$ in $V_f=\tilde{V}_\phi$ and $\symV$ respectively, both of which are again stable under $G_\QQ$. As $p\ne2$, we have
$$
\Sym^2T_f\cong T_1\oplus T_2\quad\text{and}\quad\Sym^2V_f/T_f\cong V_1/T_1\oplus V_2/T_2
$$ 
for some $\calO_E$-lattice $T_i$ inside $V_i$ for $i=1,2$. 

Write $H^1_\pm(\Qpn,T_2^*)$ for the projection of $\ker(\col^\pm)$ into $H^1(\Qpn,T_2^*)$ and define $H^1(\Qpn,V_2/T_2(1))^\pm$
to be the exact annihilator of $H^1_\pm(\Qpn,T_2^*)$ under the Pontryagin duality
$$
H^1(\Qpn,T_2^*)\times H^1(\Qpn,V_2/T_2(1))\rightarrow\Qp/\Zp.
$$

Let $F$ be a number field. Then the $p$-Selmer group of $\Sym^2T_f(1)$ decomposes into those of $T_1(1)$ and $T_2(1)$: 
$$
\Sel_p(\Sym^2T_f(1)/F)=\Sel_p(T_1(1)/F)\oplus \Sel_p(T_2(1)/F).
$$
We define the plus/minus Selmer groups over $k_n=\QQ(\mu_{p^n})$ by
\begin{eqnarray*}
\Sel_p^\pm(T_2(1)/k_n)&=&\ker\left(\Sel_p(T_2(1)/k_n)\rightarrow\frac{H^1(\Qpn,V_2/T_2(1))}{H^1_f(\Qpn,V_2/T_2(1))^\pm}\right),\\
\Sel_p^\pm(\Sym^2T_f(1)/k_n)&=&\Sel_p(T_1(1)/k_n)\oplus \Sel_p^\pm(T_2(1)/k_n)
\end{eqnarray*}
and let
$$
\Sel_p^\pm(T_2(1)/k_\infty)=\lim_{\rightarrow}\Sel_p^\pm(T_2(1)/k_n)\quad\text{and}\quad\Sel_p^\pm(\Sym^2T_f(1)/k_\infty)=\lim_{\rightarrow}\Sel_p^\pm(\Sym^2T_f(1)/k_n).
$$


\subsection{Description of the kernels}
In this section, we give a more explicit description of the groups $H^1_f(\Qpn,V_2/T_2(1))^\pm$ under the following additional assumption.
\begin{hypothesis}\label{hyp2}
Either $p-1\nmid k-1$ or $\epsilon\ne1$.
\end{hypothesis}

In \cite[\S4]{lei09}, one of the key ingredients to give an explicit description of $H^1_f(\Qpn,V_f/T_f(1))^\pm$ is the fact that $\left(V_f/T_f(j)\right)^{G_{\Qpn}}=0$ under some appropriate assumptions. We show below that we get an analogue of such description under Hypothesis~\ref{hyp2}.

\begin{lemma}\label{nofix}
If Hypothesis~\ref{hyp2} holds, then $\left(V_2/T_2(j)\right)^{G_{\Qpn}}=0$ for all $j\in\ZZ$ and $n\in\ZZ_{\ge0}$.
\end{lemma}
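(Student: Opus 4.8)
The plan is to reduce the statement to one about the residual representation and then to an explicit computation with fundamental characters. First, a nonzero element of $\left(V_2/T_2(j)\right)^{G_{\Qpn}}$ has a nonzero $\calO_E$-multiple of exact order $\varpi$ (a uniformiser of $\calO_E$), and such a multiple lies in $\left((V_2/T_2)[\varpi]\right)(j)^{G_{\Qpn}}$. Since $(V_2/T_2)[\varpi]\cong T_2/\varpi T_2=:\overline{V_2}$ as $G_{\Qp}$-modules, it suffices to prove $\overline{V_2}(j)^{G_{\Qpn}}=0$ for all $j$ and all $n$.

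Next I would use the induced structure. By Proposition~\ref{2reps}, $V_2\cong\IndQK V(\phi^2)$; since $p$ is inert in $K$, the completion $K_{\mathfrak p}$ is the unramified quadratic extension of $\Qp$, and Mackey's formula together with the projection formula gives $\overline{V_2}(j)|_{G_{\Qp}}\cong\Ind_{G_{K_{\mathfrak p}}}^{G_{\Qp}}\bigl(\bar\psi\bar\chi^{\,j}|_{G_{K_{\mathfrak p}}}\bigr)$, where $\bar\psi\colon G_{K_{\mathfrak p}}\to\mathbb F^\times$ is the reduction of the character through which $G_{K_{\mathfrak p}}$ acts on $V(\phi^2)$ ($\mathbb F$ being the residue field of $E$). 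As $\Qpn$ is totally ramified over $\Qp$ while $K_{\mathfrak p}$ is unramified, the two fields are linearly disjoint, so Mackey applies again: $\overline{V_2}(j)|_{G_{\Qpn}}\cong\Ind_{G_M}^{G_{\Qpn}}\!\bigl(\bar\psi\bar\chi^{\,j}|_{G_M}\bigr)$ with $M=K_{\mathfrak p}(\mu_{p^n})$, and by Frobenius reciprocity this module has a nonzero $G_{\Qpn}$-invariant vector if and only if $\bar\psi\bar\chi^{\,j}|_{G_M}=1$.

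Now I would collapse the dependence on $j$ and $n$. Since $\bar\psi\bar\chi^{\,j}$ takes values in $\mathbb F^\times$, a group of order prime to $p$, whereas $\Gal\!\bigl(M/K_{\mathfrak p}(\mu_p)\bigr)$ is a $p$-group, and since $\bar\chi$ is trivial on $G_{K_{\mathfrak p}(\mu_p)}\subseteq G_{\Qp(\mu_p)}$, the condition $\bar\psi\bar\chi^{\,j}|_{G_M}=1$ for some $j$ and some $n\ge1$ is equivalent to $\bar\psi|_{G_{K_{\mathfrak p}(\mu_p)}}=1$. The case $n=0$ reduces to the same statement, because $\bar\psi\bar\chi^{\,j}|_{G_{K_{\mathfrak p}}}=1$ forces $\bar\psi$ to be the restriction of a power of $\bar\chi$ (hence $\bar\psi|_{G_{K_{\mathfrak p}(\mu_p)}}=1$), and conversely if $\bar\psi|_{G_{K_{\mathfrak p}(\mu_p)}}=1$ then $\bar\psi$ factors through $\Gal\!\bigl(K_{\mathfrak p}(\mu_p)/K_{\mathfrak p}\bigr)$, through which $\bar\chi|_{G_{K_{\mathfrak p}}}$ also generates the character group. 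Thus the lemma is equivalent to: $\bar\psi$ is \emph{not} the restriction to $G_{K_{\mathfrak p}}$ of an integral power of $\bar\chi$, equivalently $\bar\psi|_{G_{K_{\mathfrak p}(\mu_p)}}\ne1$.

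The remaining step, which I expect to be the main obstacle, is to derive this from Hypothesis~\ref{hyp2}. One computes $\bar\psi$ from the CM data: since $\phi$ has type $(-k+1,0)$, the Hodge--Tate weights of $V_2$ are $0$ and $2-2k$ (Corollary~\ref{HT}), so $\bar\psi$ restricted to the inertia subgroup $I_{\Qp}=I_{K_{\mathfrak p}}$ is a power of a niveau-$2$ fundamental character (the power being read off from the explicit $\Dcris(V_2)$ of Proposition~\ref{des} and Corollary~\ref{eval}), while the unramified part of $\bar\psi$ is governed by $\phi^2(\mathfrak p)$, which through the identity $\det\rho_f=\epsilon\chi^{k-1}$ (Lemma~\ref{det}) and the Weil estimate on $\phi(\mathfrak p)$ is expressed in terms of $\epsilon(p)$. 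Supposing $\bar\psi=\bar\chi^{\,t}|_{G_{K_{\mathfrak p}}}$ for some $t$ and matching the two characters on $I_{K_{\mathfrak p}}$ and on a Frobenius element, one finds — using Hypothesis~\ref{hyp1} to pin down the finite part of $\phi$ — that such $\bar\psi$ can exist only when Hypothesis~\ref{hyp2} fails. The delicate point is precisely this last piece of book-keeping: the exact power of the niveau-$2$ fundamental character entering $\bar\psi|_{I_{\Qp}}$, and the normalisation of the local reciprocity map in the Frobenius comparison.
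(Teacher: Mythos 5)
Your reduction to the residual representation and the two applications of Mackey's formula are correct: since $K_{\mathfrak p}/\Qp$ is unramified while $\Qpn/\Qp$ is totally ramified, one indeed gets $\overline{V_2}(j)|_{G_{\Qpn}}\cong\Ind_{G_M}^{G_{\Qpn}}\bigl(\bar\psi\bar\chi^{j}|_{G_M}\bigr)$ with $M=K_{\mathfrak p}(\mu_{p^n})$, and by Frobenius reciprocity the lemma is equivalent to $\bar\psi$ not being the restriction of an integral power of $\bar\chi$ to $G_{K_{\mathfrak p}}$. That much is a clean and genuinely different reformulation of the problem.

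However, you stop exactly at the point where the lemma has actual content. The last paragraph does not prove anything: it lists the ingredients one \emph{might} use (niveau-$2$ fundamental characters, the unramified part of $\bar\psi$, Hypotheses~\ref{hyp1} and \ref{hyp2}), and then explicitly defers the computation as the ``delicate point.'' There are also concrete reasons to worry that this route would not close cleanly. Reading off $\bar\psi|_{I_{\Qp}}$ as a prescribed power of a niveau-$2$ fundamental character from $\Dcris(V_2)$ is a Fontaine--Laffaille argument, which requires a bound of the form $2k-2\le p-1$ that is nowhere assumed in the paper. Likewise ``the unramified part of $\bar\psi$ is governed by $\phi^2(\mathfrak p)$'' is not well posed as stated, since $\psi=\tilde\phi^2|_{G_{K_{\mathfrak p}}}$ is ramified (it has nonzero Hodge--Tate weights); making sense of this requires pinning down a Lubin--Tate twist, which you acknowledge but do not do.

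The paper takes an entirely different and much shorter route, citing \cite[Lemma~4.4]{lei09}: it suffices to exhibit a prime $q\nmid N$, inert in $K$, for which both eigenvalues $\pm\epsilon(q)\chi^j(q)q^{k-1}$ of $\mathrm{Frob}_q$ on $\overline{V_2}(j)$ are $\not\equiv1\bmod p$; Hypothesis~\ref{hyp2} together with Dirichlet's theorem on primes in arithmetic progressions then produces such a $q$. This global Chebotarev-style argument sidesteps all of the local book-keeping (fundamental characters, Fontaine--Laffaille bounds, Lubin--Tate normalisations) that your sketch would have to wrestle with. So: your reduction is a valid alternative framing, but as it stands the proposal has a genuine gap at the decisive step, and the obstacles you flag there are real, not just cosmetic.
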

\begin{proof}
Let $q\nmid N$ be a prime which is inert in $K$. Then, by the second half of the proof of Proposition~\ref{2reps}, we see that the eigenvalues of the $q$-Frobenius on $V_2(j)$ are $\pm\epsilon(q)\chi^j(q)q^{k-1}$. Therefore, as in \cite[proof of Lemma~4.4]{lei09}, it is enough to show that there exists some $q$ such that
$$\pm\epsilon(q)\chi^j(q)q^{k-1}\not\equiv1\mod p.$$
If either $p-1\nmid k-1$ or $\epsilon(q)\ne1$, we can find such a $q$ by Dirichlet's theorem, so we are done.
\end{proof}

\begin{corollary}
If Hypothesis~\ref{hyp2} holds, then the restriction map $H^1(\QQ_{p,m},T_2(1))\rightarrow H^1(\Qpn,T_2(1))$ is injective for any integers $n\ge m\ge 0$. On identifying the former as a subgroup of the latter, we have
$$
H^1_f(\Qpn,V_2/T_2(1))^\pm=H^1_f(\Qpn,T_2(1))^\pm\otimes E/\calO_E.
$$
Here
$$
H^1_f(\Qpn,T_2(1))^\pm=\left\{x\in H^1_f(\Qpn,T_2(1)):\cor_{n/m+1}(x)\in H^1_f(\QQ_{p,m},T_2(1))\forall m\in S_n^\pm \right\}
$$
where $\cor$ denotes the corestriction map and
\begin{eqnarray*}
S_n^+&=&\{m\in[0,m-1]:m\text{ even}\},\\
S_n^-&=&\{m\in[0,m-1]:m\text{ odd}\}.
\end{eqnarray*}
\end{corollary}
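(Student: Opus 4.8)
The plan is to mimic the argument in \cite[\S4]{lei09} for $V_f$, using Lemma~\ref{nofix} as the substitute for the vanishing statement $\left(V_f/T_f(j)\right)^{G_{\Qpn}}=0$ used there. First I would establish the injectivity of the restriction map $H^1(\QQ_{p,m},T_2(1))\rightarrow H^1(\Qpn,T_2(1))$: by inflation-restriction, its kernel is $H^1(\Gal(\Qpn/\QQ_{p,m}),T_2(1)^{G_{\Qpn}})$, which vanishes because $T_2(1)^{G_{\Qpn}}\subseteq\left(V_2/T_2(1)\right)^{G_{\Qpn}}=0$ by Lemma~\ref{nofix} applied with $j=1$ (more precisely, $T_2(1)^{G_{\Qpn}}$ is a finitely generated $\calO_E$-module with trivial reduction mod the maximal ideal, forcing it to be $0$). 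This lets me regard $H^1(\QQ_{p,m},T_2(1))$ as a subgroup of $H^1(\Qpn,T_2(1))$ compatibly with corestriction, so that the set $H^1_f(\Qpn,T_2(1))^\pm$ is well-defined.

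Next I would unwind the definition of $H^1_f(\Qpn,V_2/T_2(1))^\pm$ as the exact annihilator of $H^1_\pm(\Qpn,T_2^*)=$ the image of $\ker(\col^\pm)$ in $H^1(\Qpn,T_2^*)$ under local Tate duality, and compare it with the corestriction-theoretic description. The key point, exactly as in \cite{lei09}, is that the Coleman map $\col^\pm$ is built from the Perrin-Riou pairing $\LL_{v}$ together with the explicit vanishing of the relevant factors $\Phi_m$ in $\log^\pm(\gamma)$; Proposition~\ref{char} translates the condition $z\in\ker(\col^\pm)$ into a statement about $\exp^*(z_{-r,n}^\sigma)$ vanishing for the characters $\theta$ of $G_n$ that do (resp.\ do not) factor through intermediate levels with the appropriate parity. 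Dualising, the annihilator of this kernel is cut out by the condition that corestriction to $\QQ_{p,m}$ lands in the Bloch-Kato subgroup precisely for $m$ in the index set $S_n^\pm$. I would then invoke Lemma~\ref{nofix} once more to identify $H^1_f(\Qpn,V_2/T_2(1))^\pm$ with $H^1_f(\Qpn,T_2(1))^\pm\otimes E/\calO_E$: the vanishing of $\left(V_2/T_2(j)\right)^{G_{\Qpn}}$ for all $j$ ensures that the natural map $H^1(\Qpn,T_2(1))\otimes E/\calO_E\rightarrow H^1(\Qpn,V_2/T_2(1))$ is injective and behaves well under the duality, so the lattice-level description transports correctly.

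The main obstacle I expect is the careful bookkeeping in the second step — matching the index sets $S_n^\pm$ with the parity of the vanishing conditions coming from $\log^\pm$, and checking that local Tate duality at level $n$ is compatible with corestriction down to level $m$ in the precise way needed. This is not conceptually hard, but it requires tracking the interplay between the Perrin-Riou pairing's interpolation formula (Proposition~\ref{char}), the factorisation of $\log^\pm(\gamma)$ into cyclotomic polynomials $\Phi_{2n}$ resp.\ $\Phi_{2n-1}$, and the duality, and an off-by-one error in the parity would give the wrong Selmer condition. Everything else — the injectivity statements and the $\otimes E/\calO_E$ identification — follows formally from Lemma~\ref{nofix} by the same arguments as in \cite[\S4]{lei09}, so I would simply cite those and concentrate the write-up on the points where $V_2$ genuinely differs from $V_f$, namely the Hodge-Tate weights $0,2-2k$ and the Frobenius eigenvalues $\pm\epsilon(p)p^{k-1}$, which only affect the range $0\le r\le 2k-3$ of twists appearing in $\log^\pm$ and not the structure of the argument.
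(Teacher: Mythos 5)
Your proposal takes exactly the paper's route: the paper's proof consists precisely of the remark that the statements carry over verbatim from \cite[\S4]{lei09} once Lemma~\ref{nofix} supplies the vanishing of $\left(V_2/T_2(j)\right)^{G_{\Qpn}}$, which is the structure of your write-up. One small quibble: the literal containment $T_2(1)^{G_{\Qpn}}\subseteq\left(V_2/T_2(1)\right)^{G_{\Qpn}}$ is not right as stated (the composite $T_2(1)\rightarrow V_2/T_2(1)$ is zero), but your parenthetical Nakayama-type argument via the reduction modulo the maximal ideal of $\calO_E$ is the correct justification, so the slip is harmless.
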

\begin{proof}
These can be proved in exactly the same way as their counterparts in \cite[\S4]{lei09} using Lemma~\ref{nofix}.
\end{proof}

\subsection{Main conjectures}

\begin{theorem}\label{MC}
Let $\theta$ be a character on $\Delta$  and $r\ge0$ an integer such that $\chi^{r+1}\theta(-1)=\eta(-1)$. Then $\Sel_p\left(\Zp(\eta)(r+1)\right)^\theta$ is $\Lambda_E(\Gamma)$-cotorsion and
$$
\Char_{\Lambda_E(\Gamma)}\left(\Sel_p\left(\Zp(\eta)(r+1)\right)^{\vee,\theta}\right)=\left(\Tw_{-r}L_p^\theta(\eta)\right).
$$
\end{theorem}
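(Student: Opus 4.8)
The statement to prove, Theorem~\ref{MC}, is the classical Iwasawa main conjecture for a Tate twist of a Dirichlet character (equivalently, the main conjecture for the cyclotomic $\ZZ_p$-extension), phrased here in the $\Lambda_E(\Gamma)$-language via the Kubota--Leopoldt $p$-adic $L$-function $L_p(\eta)$. I would present the proof as a reduction to the cyclotomic main conjecture of Mazur--Wiles (and its refinements).

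\textbf{Proof strategy.} The plan is to identify $\Sel_p(\ZZ_p(\eta)(r+1))^\theta$ with the $\theta$-isotypic piece of a standard Iwasawa module and then quote Mazur--Wiles. First I would fix the character $\psi = \eta\theta$, a Dirichlet character of conductor prime to $p$ (using that $\theta$ ranges over characters of $\Delta$, hence of $p$-power-minus-one order and conductor dividing $p$), and observe that by Shapiro's lemma and the usual unwinding of the Selmer condition at $p$ and away from $p$, the Pontryagin dual $\Sel_p(\ZZ_p(\eta)(r+1))^{\vee,\theta}$ is (up to a pseudo-isomorphism that does not affect characteristic ideals) isomorphic as a $\Lambda_{\calO_E}(\Gamma)$-module to a Tate twist by $\chi^{-r}$ of the minus part (or the appropriate eigenspace determined by the parity condition $\chi^{r+1}\theta(-1)=\eta(-1)$) of the classical unramified Iwasawa module $X_\infty^{\psi}$, i.e. the inverse limit of $\psi$-components of $p$-class groups up the cyclotomic tower. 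Here the hypothesis $\chi^{r+1}\theta(-1)=\eta(-1)$ is exactly what guarantees the relevant $L$-values are critical/nonzero and picks out the eigenspace on which the class-group side is the right one (rather than a unit group). The twisting operator $\Tw_{-r}$ on the analytic side matches the Tate twist on the algebraic side by construction of $L_p(\eta)$ in Theorem~\ref{KL}.

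\textbf{Key steps, in order.} (1) Reduce from $G_\infty$ to $\Gamma$ by taking $\theta$-isotypic components, noting $\Lambda_E(G_\infty)^\theta \cong \Lambda_E(\Gamma)$. (2) Unwind the Selmer group: local conditions at $\ell\nmid p$ are automatically satisfied on the dual side after passing to the limit (finiteness of local cohomology), and the condition at $p$ for a one-dimensional crystalline representation of Hodge--Tate weight in the critical range reduces $H^1_f$ to the expected ``Greenberg'' condition; this expresses the dual Selmer group as a cyclotomic Iwasawa module attached to $\psi\chi^{r+1}$. (3) Invoke the finiteness of $X_\infty^\psi$ as a torsion $\Lambda$-module (Iwasawa, from the vanishing of the $\mu$-invariant for abelian fields by Ferrero--Washington, plus Iwasawa's theorem that these modules are torsion). (4) Apply the Mazur--Wiles theorem (the main conjecture over $\QQ$): $\Char(X_\infty^\psi) = (\text{the relevant branch of } L_p)$, and transport through the twist to land on $(\Tw_{-r}L_p^\theta(\eta))$. (5) Track $E$-coefficients: since everything is flat over $\calO_E$, tensoring Mazur--Wiles with $E$ gives the stated equality in $\Lambda_E(\Gamma)$.

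\textbf{Main obstacle.} The genuinely delicate point is Step~(2): carefully matching the Bloch--Kato Selmer condition at $p$ (which is how $\Sel_p$ is defined in the excerpt, via $H^1_f$) with the Iwasawa-theoretic / Greenberg local condition that appears in the Mazur--Wiles formulation, including getting the Euler factors and the precise eigenspace (determined by $\theta(-1)$ versus $\eta(-1)$) correct, and checking that the discrepancy between ``$\Sel$ with $\ZZ[\mu_{p^n},1/p]$-coefficients'' and the strict class group is controlled (a semilocal term at $p$ that is pseudo-null or accounted for by the Euler factor). One must also ensure the parity hypothesis rules out the trivial-zero/exceptional case so that no extra $\Gamma$-factor appears. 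Once this dictionary is in place the result is immediate from the classical main conjecture, so I would state Step~(2) carefully and refer to the standard references (e.g.\ Mazur--Wiles, Rubin's appendix to Lang, or Perrin-Riou) for the comparison, and cite \cite{kato04} for the precise form of the local conditions compatible with the Euler system input used earlier in this paper.
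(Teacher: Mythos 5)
Your proposal is correct and follows essentially the same route as the paper: the paper's proof simply records the compatibilities $M^\vee(r)=M(-r)^\vee$ and $\Char(M(r))=\Tw_r(\Char(M))$ and then declares the statement a rewrite of the Mazur--Wiles main conjecture, which is exactly your reduction, only stated far more tersely. Your additional care in Step~(2) (matching the Selmer condition with the classical Iwasawa module and tracking the parity/eigenspace) is just an expansion of what the paper leaves implicit, not a different argument.
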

\begin{proof}
For any $\Lambda_E(G_\infty)$-module, $M^\vee(r)=M(-r)^\vee$. If $M$ is a $\Lambda_E(\Gamma)$-torsion module, we have $\Char(M(r))=\Tw_r(\Char(M))$. Therefore, the result is just a rewrite of the Iwasawa main conjecture, as proved by Mazur-Wiles \cite{mazurwiles}.
\end{proof}

\begin{corollary}\label{MC1}
Let $\eta$ be an even character on $\Delta$. Then
$$
\Char_{\Lambda_E(\Gamma)}\left(\Sel_p(T_1(1)/k_\infty)^{\vee,\eta}\right)=(\Tw_{-k+1}L_p^\eta(\varepsilon_K\cdot\epsilon)).
$$
\end{corollary}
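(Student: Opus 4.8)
The plan is to deduce the statement directly from Theorem~\ref{MC}: since $V_1$ is one-dimensional and cut out by a Dirichlet character, the assertion is nothing but a twisted rewrite of the Iwasawa Main Conjecture of Mazur--Wiles.

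First I would fix the identification. By \eqref{V1} we have $V_1\cong\varepsilon_K\cdot\epsilon\cdot\chi^{k-1}$, so $T_1(1)$ is, under the chosen isomorphism, a Tate twist of the $G_\QQ$-stable lattice $\calO_E(\varepsilon_K\epsilon)$ in the one-dimensional representation attached to the Dirichlet character $\varepsilon_K\cdot\epsilon$. Next I would check that Theorem~\ref{KL}, and hence Theorem~\ref{MC}, applies to this character: it is non-trivial, since $\varepsilon_K$ is quadratic so that Hypothesis~\ref{hyp1} ($\varepsilon_K\ne\epsilon$) is equivalent to $\varepsilon_K\epsilon\ne1$; and its conductor is prime to $p$, since $p\nmid N$ and $p$ is inert, hence unramified, in $K$.

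With this in hand, the corollary is the specialization of Theorem~\ref{MC} to the Dirichlet character $\varepsilon_K\cdot\epsilon$: one selects the Tate-twist index $r$ and the $\Delta$-character $\theta$ so that $\Zp(\varepsilon_K\epsilon)(r+1)\cong T_1(1)$, so that the isotypic component occurring on the Selmer side is the $\eta$-part of $\Sel_p(T_1(1)/k_\infty)$, and so that $\Tw_{-r}L_p^\theta(\varepsilon_K\epsilon)=\Tw_{-k+1}L_p^\eta(\varepsilon_K\epsilon)$; then the $\Lambda_E(\Gamma)$-cotorsion assertion and the characteristic-ideal equality are read off from Theorem~\ref{MC} word for word. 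There is no genuine obstacle here; the only point requiring care is this twisting bookkeeping---lining up $r$, the character $\theta$ (which may differ from $\eta$ by a power of the Teichm\"uller character, since a Tate twist shifts the $\Delta$-grading), and the operator $\Tw$ on the two sides---and in particular checking that the parity hypothesis $\chi^{r+1}\theta(-1)=(\varepsilon_K\epsilon)(-1)$ of Theorem~\ref{MC} holds, which one does using that $\rho_f$ is odd, exactly as in the proof of Theorem~\ref{prelog}.
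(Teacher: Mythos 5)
Your proposal is essentially the paper's own proof: the paper simply applies Theorem~\ref{MC} to the Dirichlet character $\varepsilon_K\cdot\epsilon$ with $r=k-1$, which is exactly your specialization once you force $\Zp(\varepsilon_K\epsilon)(r+1)\cong T_1(1)$. The extra verifications you supply (non-triviality of $\varepsilon_K\epsilon$ via Hypothesis~\ref{hyp1}, conductor prime to $p$ since $p\nmid N$ and $p$ is unramified in $K$, and the parity/Teichm\"uller bookkeeping in matching $\theta$, $\eta$ and $\Tw_{-k+1}$) are left implicit in the paper, so your write-up is a slightly more careful version of the same argument.
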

\begin{proof}
We may apply Theorem~\ref{MC} to $\varepsilon_K\cdot\epsilon$ with $r=k-1$.
\end{proof}

\begin{proposition}\label{MC2}
Let $\delta=\pm$ and let $\eta$ be a character on $\Delta$ such that $\eta=1$ if $\delta=-$. Then, $\Sel_p^\delta(T_2(1)/k_\infty)^\theta$ is $\Lambda_E(\Gamma)$-cotorsion and
$$
\Char_{\Lambda_E(\Gamma)}\left(\Sel_p^\delta(T_2(1)/k_\infty)^{\vee,\eta}\right)=\left(L_p^{\delta,\eta}(\phi^2)\right).
$$
\end{proposition}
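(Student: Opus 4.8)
My plan is to mirror the proof of the main conjecture for $V_f$ given in \cite{lei09}. Since by \S\ref{gosplit} the $G_\QQ$-representation $V_2\cong\IndQK V(\phi^2)$ has exactly the same shape as $V_{f'}$ for a CM newform $f'$ of weight $2k-1$ with $L(f',s)=L(\phi^2,s)$, the arithmetic input will be the main conjecture over the imaginary quadratic field $K$ for the Grossencharacter $\bar\phi^2$ (Rubin, Pollack--Rubin). First I would record the structural facts coming from the CM, $p$-inert situation: the weak Leopoldt conjecture for $V_2^*$ holds, so $\Hh^1(T_2^*)^\eta$ is a torsion-free $\Lambda_E(\Gamma)$-module of rank one, $\Hh^2(T_2^*)^\eta$ is $\Lambda_E(\Gamma)$-torsion with trivial characteristic ideal, and the localisation map $\loc\colon\Hh^1(T_2^*)^\eta\to\HIw(T_2^*)^\eta$ is injective. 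The class $\Tw_{2k-2}(z(\bar\phi^2))\in\Hh^1(T_2^*)$ coming from elliptic units is the distinguished global class, and by construction $\col^\delta$ sends it to $L_p^\delta(\phi^2)$.

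Next I would run the global duality argument. Poitou--Tate duality, with the local condition at $p$ taken to be $\ker(\col^\delta)\subseteq\HIw(T_2^*)$ (precisely the condition defining $\Sel_p^\delta$, the conditions away from $p$ matching those built into $\Hh^1(T_2^*)$), relates $\Sel_p^\delta(T_2(1)/k_\infty)^\vee$ to $\HIw(T_2^*)/\big(\loc(\Hh^1(T_2^*))+\ker(\col^\delta)\big)$ up to a quotient of $\Hh^2(T_2^*)$. Invoking surjectivity of $\col^\delta$ on the relevant $\eta$-part — which is where one must impose $\eta=1$ when $\delta=-$, exactly as in \cite{lei09} — this identifies $\Char_{\Lambda_E(\Gamma)}(\Sel_p^\delta(T_2(1)/k_\infty)^{\vee,\eta})$ with the ideal $\col^\delta\big(\loc(\Hh^1(T_2^*)^\eta)\big)$ of $\Lambda_E(\Gamma)$. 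The divisibility implicit here, together with $L_p^{\delta,\eta}(\phi^2)\ne0$ (the analogue of Lemma~\ref{not0}), already gives that $\Sel_p^\delta(T_2(1)/k_\infty)^\eta$ is $\Lambda_E(\Gamma)$-cotorsion.

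Finally I would invoke the main conjecture over $K$: it computes the characteristic ideal of the torsion quotient $\Hh^1(T_2^*)^\eta/\Lambda_E(\Gamma)\cdot\Tw_{2k-2}(z(\bar\phi^2))$, and pushing this through $\col^\delta\circ\loc$ while comparing with the image $L_p^{\delta,\eta}(\phi^2)$ of the elliptic unit class forces $\col^\delta\big(\loc(\Hh^1(T_2^*)^\eta)\big)=\big(L_p^{\delta,\eta}(\phi^2)\big)$, once the $\log^\delta(\gamma)$ normalisation built into $\col^\delta$ and $L_p^\delta$ has been accounted for. Combined with the previous paragraph, this yields the asserted equality of characteristic ideals.

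The hard part will be this last comparison: verifying that the reduction to the main conjecture over $K$, together with the division by $\log^\delta(\gamma)$, produces no spurious factor — equivalently, that the plus/minus local condition at $p$ is exactly the right size. This is the content of the surjectivity of $\col^\delta$ on $\eta$-components, is tied to the restriction $\eta=1$ in the minus case, and is the point at which the detailed computations of \cite{lei09} (rather than formal manipulations) are genuinely needed.
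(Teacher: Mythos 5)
Your proposal is correct and follows essentially the same route as the paper, which simply cites the Pollack--Rubin argument as generalised for CM modular forms in \cite[\S7]{lei09}, resting on Rubin's main conjecture for $K$. Your sketch (Poitou--Tate with the $\ker(\col^\delta)$ local condition, surjectivity of $\col^\delta$ on the relevant $\eta$-part forcing $\eta=1$ in the minus case, and the elliptic-unit class mapping to $L_p^{\delta,\eta}(\phi^2)$) is precisely an expansion of that cited argument.
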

\begin{proof}
This follows from the same argument as in \cite{pollackrubin04}, which has been generalised for CM modular forms in \cite[\S7]{lei09}. It relies on the main conjecture for $K$ as proved in \cite{rubin91}.
\end{proof}
\begin{theorem}\label{MC0}
Let $\eta$ be character on $\Delta$ as in the statement of Proposition~\ref{MC2}. Then $\Sel_p^\pm(\symV/k_\infty)^{\eta}$ is $\Lambda_E(\Gamma)$-cotorsion and
$$
\Char_{\Lambda_E(\Gamma)}\left(\Sel_p^\pm(\symV/k_\infty)^{\vee,\eta}\right)=\left(L_p^{\pm,\eta}(\symV)\right).
$$
\end{theorem}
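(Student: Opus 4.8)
The plan is to obtain Theorem~\ref{MC0} as a formal consequence of the decomposition $\symV\cong V_1\oplus V_2$ of Proposition~\ref{2reps} together with the two main-conjecture statements already established for the pieces: Corollary~\ref{MC1} for $V_1$ and Proposition~\ref{MC2} for $V_2$. First I would recall from \S\ref{selmer} that, on the level of $\calO_E$-lattices, $\Sym^2T_f(1)\cong T_1(1)\oplus T_2(1)$ and that, by construction, the plus/minus local condition at $p$ only alters the $T_2$-factor, so that
\[
\Sel_p^\pm(\symV/k_\infty)=\Sel_p(T_1(1)/k_\infty)\oplus\Sel_p^\pm(T_2(1)/k_\infty).
\]
Since $\Delta$ has order prime to $p$, taking the $\eta$-isotypic component is an exact operation and commutes with Pontryagin duality, so the same direct-sum decomposition holds after passing to $^{\vee,\eta}$.

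Next I would argue as follows. By Corollary~\ref{MC1} the module $\Sel_p(T_1(1)/k_\infty)^{\vee,\eta}$ is $\Lambda_E(\Gamma)$-torsion with characteristic ideal $\left(\Tw_{-k+1}L_p^\eta(\varepsilon_K\cdot\epsilon)\right)$, and by Proposition~\ref{MC2} the module $\Sel_p^\pm(T_2(1)/k_\infty)^{\vee,\eta}$ is $\Lambda_E(\Gamma)$-torsion with characteristic ideal $\left(L_p^{\pm,\eta}(\phi^2)\right)$; both statements apply to the $\eta$ fixed in the theorem. A finitely generated $\Lambda_E(\Gamma)$-module is torsion precisely when each of its direct summands is, so $\Sel_p^\pm(\symV/k_\infty)^{\vee,\eta}$ is $\Lambda_E(\Gamma)$-cotorsion. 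Since the characteristic ideal is multiplicative over direct sums of finitely generated torsion modules, I would then conclude
\[
\Char_{\Lambda_E(\Gamma)}\!\left(\Sel_p^\pm(\symV/k_\infty)^{\vee,\eta}\right)=\left(L_p^{\pm,\eta}(\phi^2)\cdot\Tw_{-k+1}L_p^\eta(\varepsilon_K\cdot\epsilon)\right),
\]
and the right-hand side equals $\left(L_p^{\pm,\eta}(\symV)\right)$ by the factorisation \eqref{facts}, read off in the $\eta$-component. This gives both assertions of the theorem.

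There is no genuine obstacle here: the arithmetic content lies entirely in Corollary~\ref{MC1} (a reformulation of the cyclotomic main conjecture) and, more substantially, in Proposition~\ref{MC2}, which rests on Rubin's proof of the main conjecture for $K$ and on the Pollack--Rubin-type identification of $\Sel_p^\pm(T_2(1))$ with $\ker(\col^\pm)$. The one point I would be careful about is the bookkeeping in the final step: one must check that the $\eta$-isotypic projection is compatible with the product formula \eqref{facts} and that the twist $\Tw_{-k+1}$ has been matched with the correct $\Delta$-component on each side, so that the equality of characteristic ideals indeed holds componentwise. This is routine once one unwinds the definitions of $L_p^\pm(\symV)$, $L_p^\pm(\phi^2)$ and $\col^\pm$ in terms of $\Tw$.
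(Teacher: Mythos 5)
Your proposal is correct and follows essentially the same route as the paper: decompose $\Sel_p^\pm(\Sym^2T_f(1)/k_\infty)$ as $\Sel_p(T_1(1)/k_\infty)\oplus\Sel_p^\pm(T_2(1)/k_\infty)$ (which holds by definition), apply Corollary~\ref{MC1} and Proposition~\ref{MC2} to the two summands, use multiplicativity of characteristic ideals over direct sums of torsion modules, and conclude via the factorisation \eqref{facts}. The extra remarks on exactness of the $\eta$-isotypic projection and its compatibility with duality are fine but the paper treats them as implicit.
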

\begin{proof}
Recall that 
\[
\Sel_p^\pm(\Sym^2T_f(1)/k_\infty)=\Sel_p(T_1(1)/k_\infty)\oplus \Sel_p^\pm(T_2(1)/k_\infty)
\]
by definition, so
\[
\Sel_p^\pm(\Sym^2T_f(1)/k_\infty)^{\vee,\eta}=\Sel_p(T_1(1)/k_\infty)^{\vee,\eta}\oplus \Sel_p^\pm(T_2(1)/k_\infty)^{\vee,\eta}.
\]
But we have
$$
L_p^{\pm,\eta}\left(\symV\right)=L_p^{\pm,\eta}(\phi^2)\times\Tw_{-k+1}\left( L_p^\eta(\varepsilon_K\cdot\epsilon)\right)
$$
by \eqref{facts}. Therefore, the theorem follows from Corollary~\ref{MC1} and Proposition~\ref{MC2} because $$\Char(M_1\oplus M_2)=\Char(M_1)\Char(M_2)$$ for any torsion modules $M_1$ and $M_2$.
\end{proof}

\section{Appendix}
In this section, we fix an integer $m\ge2$. We prove an analogue of Proposition~\ref{2reps}.
\begin{proposition}
If $m$ is even, we have a decomposition of $G_\QQ$-representations
$$
\Sym^mV_f\cong\bigoplus_{i=0}^{m/2-1} \Big(\tilde{V}_{\phi^{m-2i}}\otimes\left(\varepsilon_K\det \rho_f\right)^i\Big)\oplus\left(\varepsilon_K\det \rho_f\right)^{m/2}.$$
If $m$ is odd, then
$$
\Sym^mV_f\cong\bigoplus_{i=0}^{(m-1)/2} \Big(\tilde{V}_{\phi^{m-2i}}\otimes\left(\varepsilon_K\det \rho_f\right)^i\Big).
$$
\end{proposition}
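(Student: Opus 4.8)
The plan is to follow the proof of Proposition~\ref{2reps} essentially verbatim, with the three monomials $x\otimes x$, $y\otimes y$, $x\otimes y+y\otimes x$ replaced by the monomial basis $\{x^{m-j}y^j:0\le j\le m\}$ of $\Sym^m V_f$, and to keep track of which of these monomials are permuted by complex conjugation. First I would record the $G_\QQ$-action on this basis from \eqref{form1} and \eqref{form2}: for $\sigma\in G_K$ each $x^{m-j}y^j$ is an eigenvector with eigenvalue $\tilde{\phi}(\sigma)^{m-j}\tilde{\phi}(\iota\sigma\iota)^j$, while for $\sigma=\iota\sigma'$ with $\sigma'\in G_K$ one has $\sigma\cdot x^{m-j}y^j=\tilde{\phi}(\sigma')^{m-j}\tilde{\phi}(\iota\sigma'\iota)^j\,x^{j}y^{m-j}$, so $\sigma$ interchanges the lines spanned by $x^{m-j}y^j$ and $x^{j}y^{m-j}$. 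Consequently, for each $0\le i\le\lfloor(m-1)/2\rfloor$ the span of $\{x^{m-i}y^i,\,x^{i}y^{m-i}\}$ is a $G_\QQ$-stable subspace, and when $m$ is even $E\cdot x^{m/2}y^{m/2}$ is $G_\QQ$-stable too; since the monomials $x^{m-j}y^j$ form a basis, this already exhibits $\Sym^m V_f$ as the claimed internal direct sum of $G_\QQ$-subrepresentations.

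The next step is to identify each summand. For the two-dimensional piece indexed by $i$, I would compare the matrix of $\rho_f(\sigma)$ on the ordered basis $(x^{m-i}y^i,\,x^{i}y^{m-i})$ with the matrix of $\tilde{V}_{\phi^{m-2i}}\otimes(\varepsilon_K\det\rho_f)^i$. On $G_K$ both are diagonal with entries $\tilde{\phi}(\sigma)^{m-i}\tilde{\phi}(\iota\sigma\iota)^i$ and $\tilde{\phi}(\sigma)^{i}\tilde{\phi}(\iota\sigma\iota)^{m-i}$, using that $\varepsilon_K$ is trivial on $G_K$ and $\det\rho_f(\sigma)=\tilde{\phi}(\sigma)\tilde{\phi}(\iota\sigma\iota)$ by Lemma~\ref{det}; on the non-trivial coset both are anti-diagonal with off-diagonal entries $\tilde{\phi}(\sigma')^{i}\tilde{\phi}(\iota\sigma'\iota)^{m-i}$ and $\tilde{\phi}(\sigma')^{m-i}\tilde{\phi}(\iota\sigma'\iota)^{i}$. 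For the one-dimensional piece (when $m$ is even) the same comparison shows the corresponding character sends $\sigma\in G_K$ to $(\det\rho_f(\sigma))^{m/2}$ and $\sigma=\iota\sigma'$ to $\tilde{\phi}(\sigma')^{m/2}\tilde{\phi}(\iota\sigma'\iota)^{m/2}$, which is exactly $(\varepsilon_K\det\rho_f)^{m/2}$.

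The only delicate point — and the one I would flag as the main thing to get right, though it is not a serious obstacle — is the sign bookkeeping on the non-trivial coset: by Lemma~\ref{det} one has $\varepsilon_K(\iota\sigma')=-1$ and $\det\rho_f(\iota\sigma')=-\tilde{\phi}(\sigma')\tilde{\phi}(\iota\sigma'\iota)$, and these two signs must be checked to cancel in $(\varepsilon_K\det\rho_f)^i$ so that the off-diagonal entries of the tensor product agree with those coming from $\Sym^m V_f$ rather than differing by $(-1)^i$. Once this cancellation is verified, the identification is an isomorphism of $G_\QQ$-representations (not merely of $G_K$-representations), and the even/odd dichotomy in the statement is simply the observation that the middle monomial $x^{m/2}y^{m/2}$ occurs precisely when $m$ is even.
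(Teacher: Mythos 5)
Your proposal is correct and follows essentially the same route as the paper's proof: the same monomial basis of $\Sym^m V_f$ (the paper's $x_r$ in symmetrized-tensor notation), the same computation of the $G_\QQ$-action via \eqref{form1} and \eqref{form2}, and the same identification of each two-dimensional piece with $\tilde{V}_{\phi^{m-2i}}\otimes(\varepsilon_K\det\rho_f)^i$ and of the middle line with $(\varepsilon_K\det\rho_f)^{m/2}$ via Lemma~\ref{det}. The sign cancellation on the non-trivial coset that you flag is exactly what makes the paper's factorisation $\tilde{\phi}^{r}(\sigma'\iota\sigma'\iota)=(\varepsilon_K\det\rho_f)^r(\iota\sigma')$ work, so your check is the same verification the paper performs implicitly.
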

\begin{proof}
We only give the proof for the case when $m$ is even since the other case can be proved in a similar way. Let $x,y$ be the basis of $V_f$ given as in \S\ref{gosplit}. For an integer $r$ such that $0\le r\le m$, we write $x_r$ for the element in $V_f^{\otimes m}$ given by 
$$
\sum a_1\otimes a_2\otimes\cdots \otimes a_m
$$
where the sum runs over $a_i\in\{x,y\}$ with $\#\{i:a_i=x\}=r$. Then, $x_0,\ldots,x_m$ give a basis of $\Sym^mV_f$.

If $\sigma\in G_K$, we have
$$
\sigma(x_r)=\tilde{\phi}^r(\sigma)\tilde{\phi}^{m-r}(\iota\sigma\iota)x_r
$$
by \eqref{form1}. If $\sigma=\iota\sigma'$ with $\sigma'\in G_K$, then
$$
\sigma(x_r)=\tilde{\phi}^r(\sigma')\tilde{\phi}^{m-r}(\iota\sigma'\iota)x_{m-r}
$$
by \eqref{form2}. Therefore, $x_r$ and $x_{m-r}$ generate a subrepresentation of $\Sym^mV_f$, which we denote by $\rho_r:G_\QQ\rightarrow {\rm GL}(V_r)$ where $0\le r \le m/2$. Note that $V_r$ is 2-dimensional if $r<m/2$ and $V_{m/2}$ is 1-dimensional. We have a decomposition
$$
\Sym^mV_f\cong\oplus_{r=0}^{m/2}V_{r}.
$$

For $r< m/2$, the matrix of $\sigma\in G_K$ respect to the basis $x_{m-r},x_{r}$ is
$$
\begin{pmatrix}
\tilde{\phi}^{m-r}(\sigma)\tilde{\phi}^{r}(\iota\sigma\iota)& 0\\
0&\tilde{\phi}^r(\sigma)\tilde{\phi}^{m-r}(\iota\sigma\iota)
\end{pmatrix}
=\tilde{\phi}^{r}(\sigma\iota\sigma\iota)\begin{pmatrix}
\tilde{\phi}^{m-2r}(\sigma)& 0\\
0&\tilde{\phi}^{m-2r}(\iota\sigma\iota)
\end{pmatrix},
$$
whereas that of $\sigma=\iota\sigma'$ with $\sigma'\in G_K$ is given by
$$
\begin{pmatrix}
0&\tilde{\phi}^r(\sigma')\tilde{\phi}^{m-r}(\iota\sigma'\iota)\\
\tilde{\phi}^{m-r}(\sigma')\tilde{\phi}^{r}(\iota\sigma'\iota)&0
\end{pmatrix}
=\tilde{\phi}^{r}(\sigma'\iota\sigma'\iota)\begin{pmatrix}
 0&\tilde{\phi}^{m-2r}(\iota\sigma'\iota)\\
\tilde{\phi}^{m-2r}(\sigma')&0
\end{pmatrix}.
$$
Therefore, we see that $\rho_r\cong\IndQK(V(\phi^{m-2r}))\cdot(\varepsilon_K\det\rho_f)^{r}$ by Lemma~\ref{det}.

Finally, for $r=m/2$, we have
$$
\sigma(x_{m/2})=
\begin{cases}
\tilde{\phi}^{m/2}(\sigma\iota\sigma\iota)x_{m/2}&\text{if $\sigma\in G_K$}\\
\tilde{\phi}^{m/2}(\sigma'\iota\sigma'\iota)x_{m/2}&\text{if $\sigma=\iota\sigma'$ where $\sigma'\in G_K$}.
\end{cases}
$$
Hence, $V_{m/2}=(\varepsilon_K\det\rho_f)^{m/2}$ again by Lemma~\ref{det}. This finishes the proof.
\end{proof}

\begin{corollary}
The complex $L$-function admits a factorisation
$$
L(\Sym^mf,s)=\begin{cases}
\left(\prod_{i=0}^{m/2-1} L\left(\phi^{m-2i},(\varepsilon_K\epsilon)^i,s-i(k-1)\right)\right)L\left((\varepsilon_K\epsilon)^{m/2},s-m/2(k-1)\right) &\text{if $m$ is even,}\\
\prod_{i=0}^{(m-1)/2} L\left(\phi^{m-2i},(\varepsilon_K\epsilon)^i,s-i(k-1)\right) &\text{otherwise.}
\end{cases}
$$
\end{corollary}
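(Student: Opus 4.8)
The plan is to deduce this factorisation from the preceding proposition in exactly the way Corollary~\ref{Lfact} was deduced from Proposition~\ref{2reps}. That proposition exhibits $\Sym^mV_f$ as an explicit direct sum of $G_\QQ$-representations, and the $L$-function of a direct sum is the product of the $L$-functions of the summands; so it suffices to identify the $L$-function of each summand with the corresponding factor on the right-hand side and then to note, as in Corollary~\ref{Lfact}, that the Euler factors at the primes $q\nmid N$ agree by the proposition.

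The only point requiring attention is the bookkeeping of the various twists. By \eqref{V1} we have $\varepsilon_K\det\rho_f=\varepsilon_K\cdot\epsilon\cdot\chi^{k-1}$, so $(\varepsilon_K\det\rho_f)^i$ is the Galois character attached to the Dirichlet character $(\varepsilon_K\epsilon)^i$ twisted by $\chi^{i(k-1)}$; twisting a representation by $\chi^{i(k-1)}$ shifts its $L$-function by $s\mapsto s-i(k-1)$, which produces all the shifts appearing in the statement. For a two-dimensional summand $\tilde{V}_{\phi^{m-2i}}\otimes(\varepsilon_K\det\rho_f)^i=\IndQK(V(\phi^{m-2i}))\otimes(\varepsilon_K\epsilon)^i\chi^{i(k-1)}$, the projection formula for induced representations rewrites it as $\IndQK(V(\phi^{m-2i})\otimes(\varepsilon_K\epsilon)^i|_{G_K})\otimes\chi^{i(k-1)}$, where $(\varepsilon_K\epsilon)^i|_{G_K}$ is the Galois character of the Grossencharacter $(\varepsilon_K\epsilon)^i\circ N_{K/\QQ}$; by the inductivity of Artin $L$-functions its $L$-function is $L(\phi^{m-2i},(\varepsilon_K\epsilon)^i,s-i(k-1))$ in the notation of \S\ref{EU}. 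When $m$ is even, the remaining one-dimensional summand $(\varepsilon_K\det\rho_f)^{m/2}$ likewise contributes $L((\varepsilon_K\epsilon)^{m/2},s-(m/2)(k-1))$.

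Putting these identifications together and taking the product over $i$ yields the displayed formula in both parities. I do not expect a genuine obstacle here: the statement is a formal consequence of the preceding proposition together with the inductivity of $L$-functions, and the only thing one must be careful about is consistently converting the Galois-theoretic twist $(\varepsilon_K\det\rho_f)^i$ into a Dirichlet-character twist plus a cyclotomic shift, and keeping track of the half-integer shift $(m/2)(k-1)$ in the even case.
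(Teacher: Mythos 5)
Your argument is correct and is essentially the paper's own: the paper proves this corollary exactly as Corollary~\ref{Lfact}, namely by comparing Euler factors at primes $q\nmid N$ using the direct-sum decomposition of $\Sym^mV_f$ from the preceding proposition. Your additional bookkeeping --- rewriting $(\varepsilon_K\det\rho_f)^i$ as $(\varepsilon_K\epsilon)^i\chi^{i(k-1)}$ via \eqref{V1} and invoking the projection formula and inductivity of $L$-functions --- just makes explicit what the paper leaves implicit, so there is nothing to correct.
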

\begin{proof}
This can be proved in the same way as Corollary~\ref{Lfact}.
\end{proof}

\begin{remark}
For $0\le i \le \lfloor (m-1)/2\rfloor$, we may obtain a $p$-adic $L$-function that interpolates the $L$-values of $\phi^{m-2i}$ at $(m-2i)(k-1)$ using Proposition~\ref{interpo}. However, when $m>2$, their product does not interpolate the $L$-values of $\Sym^mf$. We would need $p$-adic $L$-functions that interpolate the $L$-values of $\phi^{m-2i}$ at $(m-i)(k-1)$ instead.
\end{remark}

\section*{Acknowledgement}
The author is extremely grateful for very helpful discussions with Daniel Delbourgo in the duration of the writing of this paper. He is indebted to Alex Bartel and Robert Harron for their useful comments on earlier versions of this paper. He is also indebted to an anonymous referee for many suggestions which helped to improve the paper.

\bibliographystyle{amsplain}
\bibliography{references}

\providecommand{\bysame}{\leavevmode\hbox to3em{\hrulefill}\thinspace}
\providecommand{\MR}{\relax\ifhmode\unskip\space\fi MR }
\providecommand{\MRhref}[2]{%
  \href{http://www.ams.org/mathscinet-getitem?mr=#1}{#2}
}
\providecommand{\href}[2]{#2}
\begin{thebibliography}{10}

\bibitem{coatesschmidt}
John Coates and Claus-G{\"u}nther Schmidt, \emph{Iwasawa theory for the
  symmetric square of an elliptic curve}, J. Reine Angew. Math.
  \textbf{375/376} (1987), 104--156.

\bibitem{dabrowski11}
Andrzej Dabrowski, \emph{Bounded $p$-adic {$L$}-functions of motives at
  supersingular primes}, C. R. Math. Acad. Sci. Paris \textbf{349} (2011),
  no.~7-8, 365--368.

\bibitem{delbourgodabrowski}
Andrzej Dabrowski and Daniel Delbourgo, \emph{{$S$}-adic {$L$}-functions
  attached to the symmetric square of a newform}, Proc. London Math. Soc. (3)
  \textbf{74} (1997), no.~3, 559--611.

\bibitem{deligne69}
Pierre Deligne, \emph{Formes modulaires et repr\'esentations $l$-adiques},
  S\'eminaire Bourbaki (1968/69), no.~21, Exp.\ No.\ 355, 139--172.

\bibitem{kato04}
Kazuya Kato, \emph{{$p$}-adic {H}odge theory and values of zeta functions of
  modular forms}, Ast\'erisque (2004), no.~295, ix, 117--290, Cohomologies
  $p$-adiques et applications arithm{\'e}tiques. III.

\bibitem{lei09}
Antonio Lei, \emph{Iwasawa theory for modular forms at supersingular primes},
  Compositio Mathematica \textbf{147} (2011), no.~03, 803--838.

\bibitem{leiloefflerzerbes10}
Antonio Lei, David Loeffler, and Sarah~Livia Zerbes, \emph{Wach modules and
  {I}wasawa theory for modular forms}, Asian J. Math. \textbf{14} (2010),
  no.~4, 475--528.

\bibitem{mazurwiles}
B.~Mazur and A.~Wiles, \emph{Class fields of abelian extensions of {${\mathbb
  Q}$}}, Invent. Math. \textbf{76} (1984), no.~2, 179--330.

\bibitem{perrinriou93}
Bernadette Perrin-Riou, \emph{Fonctions {$L$} {$p$}-adiques d'une courbe
  elliptique et points rationnels}, Ann. Inst. Fourier (Grenoble) \textbf{43}
  (1993), no.~4, 945--995.

\bibitem{perrinriou98}
\bysame, \emph{Z\'eros triviaux des fonctions {$L$} {$p$}-adiques, un cas
  particulier}, Compositio Math. \textbf{114} (1998), no.~1, 37--76.

\bibitem{pollack03}
Robert Pollack, \emph{On the {$p$}-adic {$L$}-function of a modular form at a
  supersingular prime}, Duke Math. J. \textbf{118} (2003), no.~3, 523--558.

\bibitem{pollackrubin04}
Robert Pollack and Karl Rubin, \emph{The main conjecture for {CM} elliptic
  curves at supersingular primes}, Ann. of Math. (2) \textbf{159} (2004),
  no.~1, 447--464.

\bibitem{rubin91}
Karl Rubin, \emph{The ``main conjecture" of {I}wasawa theory for imaginary
  quadratic fields}, Invent. Math. \textbf{103} (1991), no.~1, 25--68.

\end{thebibliography}
\end{document}